\newtheorem{theorem}{Theorem}
\newtheorem{lemma}[theorem]{Lemma}
\newtheorem{corollary}[theorem]{Corollary}
\newtheorem{proposition}[theorem]{Proposition}
\theoremstyle{remark}
\newtheorem*{remark}{Remark}
\newtheorem*{example}{Example}
\numberwithin{theorem}{section} \numberwithin{equation}{section}
\newcommand{\Mp}{\text {\rm Mp}}
\newcommand{\R}{\mathbb{R}}
\newcommand{\C}{\mathbb{C}}
\newcommand{\Q}{\mathbb{Q}}
\newcommand{\Z}{\mathbb{Z}}
\newcommand{\N}{\mathbb{N}}
\newcommand{\SL}{{\text {\rm SL}}}
\newcommand{\sgn}{\operatorname{sgn}}
\newcommand{\PSL}{{\text {\rm PSL}}}
\newcommand{\F}{\mathcal{F}}
\newcommand{\h}{\mathbb{H}}
\newcommand{\G}{\Gamma}
\newcommand{\dg}{\mathcal{D}} 
\newcommand{\dgdelta}{{\mathcal{D}(\Delta)}} 
\newcommand{\abs}[1]{\left\vert#1\right\vert}
\newcommand{\e}{\mathfrak{e}}
\newcommand{\smallabcd}{\left(\begin{smallmatrix}a & b \\ c & d\end{smallmatrix}\right)}
\newcommand{\Deltaover}[1]{\left(\frac{\Delta}{#1}\right)}
\newcommand{\smallTmatrix}{\left(\begin{smallmatrix}1 & 1 \\ 0 & 1\end{smallmatrix}\right)}
\newcommand{\smallSmatrix}{\left(\begin{smallmatrix}0 & -1 \\ 1 & 0\end{smallmatrix}\right)}
\newcommand{\M}{\mathcal{M}}
\newcommand{\mt}{\mathbf{t}}
\newcommand{\thetaL}[1]{\Theta_{\Delta,r}(\tau,z,#1)}
\newcommand{\phikm}{\varphi_{\text{KM}}}
\newcommand{\phis}{\varphi_{\text{S}}}
\newcommand{\thetah}[1]{\theta_h(\tau,z,#1)}
\newcommand{\LO}{\Lambda^{\text{o}}_{\Delta,r}(\tau,f)}
\newcommand{\LE}{\Lambda^{\text{e}}_{\Delta,r}(\tau,f)}
\begin{document}
\title[] {Formulas for the coefficients of half-integral weight harmonic Maa\ss{}  forms}

\author{Claudia Alfes}
\address{Fachbereich Mathematik, Technische Universit\"at Darmstadt, Schlo\ss gartenstr. 7, D-64289 Darmstadt, Germany} \email{alfes@mathematik.tu-darmstadt.de}
\thanks {} \subjclass[2000] {}
\date{\today}

\begin{abstract}
Recently, Bruinier and Ono proved that the coefficients of certain weight $-1/2$ harmonic weak Maa\ss{}  forms are given as ``traces'' of singular moduli for harmonic weak Maa\ss{} forms. Here, we prove that similar results hold for the coefficients of harmonic weak Maa\ss{}  forms of weight $3/2+k$, $k$ even, and weight $1/2-k$, $k$ odd, by extending the theta lift of Bruinier-Funke and Bruinier-Ono. Moreover, we generalize these results to include \textit{twisted} traces of singular moduli using earlier work of the author and Ehlen on the twisted Bruinier-Funke-lift. Employing a duality result between weight $k$ and $2-k$, we are able to cover all half-integral weights. 
We also show that the non-holomorphic part of the theta lift in weight $1/2-k$, $k$ odd, is connected to the vanishing of the special value of the $L$-function of a certain derivative of the lifted function.
\keywords{11F37 \and 11F30 \and 11F20 \and 11F67}
\end{abstract}

\maketitle

\section{Introduction and statement of results}

A classical result states that the values of the modular $j 
$-invariant at quadratic irrationalities, called ``singular moduli'', are  
algebraic integers. Zagier \cite{ZaTr} showed that the (twisted) traces  
of these values occur as the  Fourier coefficients of weakly holomorphic modular forms of weight  
$3/2$. These are meromorphic modular forms that are holomorphic on the  
upper half-plane $\mathbb{H}:=\left\{z\in\C;\Im(z)>0\right\}$, with possible  
poles at the cusps.

Zagier's results were generalized in various directions, mostly for  
modular curves of genus 0 \cite{BriOnoPoin,DukeJenkins,Kim,MilPix}. Building upon previous work of Funke  
\cite{Funke} Bruinier and Funke \cite{BrFu06} showed that Zagier's result on the traces of the $j$-function can be obtained as a special case of a theta lift  using a kernel  
function constructed by Kudla and Millson \cite{KM86}. The lift of Bruinier and Funke maps a harmonic weak Maa\ss{}  form $F$ of weight 0 on a modular curve of arbitrary genus to a form $\Lambda(\tau,F)$ of weight $3/2$. The Fourier coefficients of positive index of $\Lambda(\tau,F)$ are given by the traces of the CM-values of $F$ \cite[Theorem 7.8]{BrFu06}. In \cite{AE} a twisted version of Bruinier's and Funke's theta lift was  
considered.

Recently, Bruinier and Ono \cite{BrOno2} obtained a result similar to that of Bruinier and Funke for  
the coefficients of weight $-1/2$ harmonic Maa\ss{} forms. Using the Maa\ss{}  raising and lowering operators they modified the theta lift of Bruinier and  
Funke such that it lifts from weight $-2$ to weight $-1/2$. In this  
way, they obtained a closed formula for the partition function $p(n)$  
in terms of traces of the CM-values of the derivative of a weakly holomorphic modular form $F$ of  
weight $-2$ on $\G_0(6)$. Moreover, they showed that these values are algebraic with explicitly bounded denominators.

Here, we generalize their construction in two ways. Firstly, we extend the lift to other weights (as suggested by Bruinier and Ono) and secondly, we include twisted traces.

To make these results more precise we briefly introduce harmonic weak Maa\ss{}  forms and quadratic forms. Throughout the introduction $N$ is a square-free positive integer.

Let $k$ be an integer. A twice continuously differentiable function $f:\h \rightarrow \C$ 
is called a \textit{harmonic weak Maa\ss{}  form of weight $k$ with respect to $\G_0(N)$} 
if it satisfies
\begin{enumerate}
 \item $f(\gamma \tau) = (c\tau+d)^k f(\tau)$, for $\gamma \in \G_0(N)$,
\item $\Delta_k f=0$,
 \item there is a polynomial $P_f(\tau)=\sum_{n\leq 0}c^+(n)q^n\in\C[q^{-1}]$ such that $f(\tau)-P_f(\tau)\rightarrow 0$ as $v\rightarrow \infty$. Similar conditions are required at all cusps.
 \end{enumerate}
Here, we write $\tau=u+iv$ with $u,v \in \R$, and
$\Delta_k=-v^2\left(\frac{\partial^2}{\partial u^2}+\frac{\partial^2}{\partial v^2}\right)
+ikv\left(\frac{\partial}{\partial u}+i\frac{\partial}{\partial v}\right)$ 
is the weight $k$ Laplace operator.
  
The space of harmonic Maa\ss{}  forms includes the spaces of (weakly) holomorphic modular forms and cusp forms. We will also define vector valued analogs of these spaces. In this case, we require a transformation property with respect to the metaplectic group (see Section \ref{sec:weil}). Moreover, we define forms of half-integral weight, then requiring that $4|N$. 
Throughout the introduction we only consider harmonic weak Maa\ss{}  forms satisfying a strong growth condition at the cusps, whereas we will later allow weaker growth conditions. 

The Fourier expansion at $\infty$ of any harmonic weak Maa\ss{}  form uniquely decomposes into a holomorphic and a non-holomorphic part \cite[Section 3]{BrFu04}
\[
f(\tau)=\sum_{n\gg -\infty}c^{+}(n)q^n +  \sum\limits_{n <0}c^{-}(n)\G(1-k,4\pi \abs{n}v)q^n,
\]
where $\G(a,x)$ denotes the incomplete $\G$-function and $q:=e^{2\pi i \tau}$. 

For a negative discriminant $D$ and an integer $r$ with $r^2\equiv D \pmod{4N}$, we denote by $\mathcal{Q}_{D,r,N}$ the set of positive and negative definite integral binary quadratic forms $Q=[a,b,c]$ of discriminant $D=b^2-4ac$ with $N|c$ and $b\equiv r\pmod{2N}$. Then we let $\alpha_Q=\frac{-b+\sqrt{D}}{2a}$ be the Heegner point corresponding to $Q=[a,b,c]\in\mathcal{Q}_{D,r,N}$. The group $\G_0(N)$ acts on $\mathcal{Q}_{D,r,N}$ with finitely many orbits.

We let $\Delta $ be a fundamental discriminant and  $r\in\Z$ with $r^2\equiv \Delta \pmod{4N}$. Moreover, let $d$ be a positive integer such that $-\sgn(\Delta)d$ is a square modulo $4N$ and let $h\in \Z/ 2N\Z$ (this index will determine the component of a vector valued form). For a harmonic Maa\ss{}  form $f$ of weight $-2k$ for $\G_0(N)$ we put $\partial f := R_{-2k}^k f$, where $R_{-2k}^k= R_{-2} \circ \cdots\circ R_{-2k}$ and $R_k:= 2i \frac{\partial}{\partial \tau}+k v^{-1}$ is the Maa\ss{}  raising operator. We define the modular trace function
\begin{equation*}
 \mt_{\Delta,r}(f;d,h)=\sum\limits_{Q\in\G_0(N)\backslash\mathcal{Q}_{-d\abs{\Delta},rh,N}}\frac{\chi_{\Delta}(Q)}{\abs{\overline\G_Q}}\partial f(\alpha_Q).
\end{equation*}
Here $\overline{\G}_Q$ denotes the stabilizer of $Q$ in $\overline{\G_0(N)}$, the image of $\G_0(N)$ in $\PSL_2(\Z)$.
The function $\chi_\Delta$ is a genus character, which will be defined in Section \ref{sec:twistvec}.
For $\Delta=1$ we have $\chi_\Delta(Q)=1$ for all $Q \in \mathcal{Q}_{-d\abs{\Delta} ,rh,N}$.

Defining a theta lift using a twisted Kudla-Millson theta function we can then generalize the results of Bruinier and Funke \cite{BrFu06} and Bruinier and Ono \cite{BrOno2}.

\begin{theorem}\label{intro:thm1}
Assume the notation as above and let $f$ be a harmonic weak Maa\ss{} form of weight $-2k$ for $\G_0(N)$.
\begin{enumerate}
 \item 
For even $k$ there is a vector valued weakly holomorphic modular form of weight $3/2+k$ such that its $(d,h)$-th Fourier coefficient is given by
\[
 (-4\pi d)^{k/2} \mt_{\Delta,r}(f;d,h).
\]

 \item
For odd $k$ there is a vector valued harmonic weak Maa\ss{}  form of weight $1/2-k$ such that its $(d,h)$-th Fourier coefficient is given by
\[
 \left(\frac{1}{4\pi d}\right)^{(k+1)/2} \prod_{j=0}^{(k-1)/2}\left(\frac{k}{2}+j\right)\left(j-\frac{k+1}{2}\right) \mt_{\Delta,r}(f;d,h).
\]
This form is weakly holomorphic if and only if $f$ is weakly holomorphic or if the twisted $L$-function of $\xi_{-2k} (f):=2iv^{-2k}\overline{\frac{\partial}{\partial\bar{\tau}}f}=\sum_{n=1}^\infty a_{\xi_{-2k}(f)}(n)q^n $ vanishes at $k+1$, that is  
\[
L(\xi_{-2k}(f),\Delta, k+1)=\sum_{n=1}^\infty \left(\frac{\Delta}{n}\right)a_{\xi_{-2k}(f)}(n)n^{-(k+1)}= 0.
\]
\end{enumerate} 
\end{theorem}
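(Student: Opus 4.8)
The plan is to realize the claimed generating function as a theta lift of $f$ against the (twisted) Kudla–Millson theta kernel, and then to compute its Fourier expansion by the same Rankin–Selberg/unfolding technique used by Bruinier–Funke and Bruinier–Ono, keeping track of the Maaß raising operators that intertwine the different weights. Concretely, write $\Theta_{\Delta,r}(\tau,z)$ for the twisted Kudla–Millson theta function attached to the lattice associated with $\Gamma_0(N)$; it transforms with weight $3/2$ in $\tau$ (valued in the appropriate Weil representation) and weight $0$ in $z$. Define the lift $\Lambda_{\Delta,r}(\tau,f) = \int_{\Gamma_0(N)\backslash\mathbb{H}} \partial f(z)\, \overline{\Theta_{\Delta,r}(\tau,z)}\, d\mu(z)$, where $\partial f = R_{-2k}^k f$ and $d\mu$ is the invariant measure. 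The first task is to show this integral converges and transforms in $\tau$ with the right weight ($3/2+k$ for even $k$, or $1/2-k$ for odd $k$ after applying the appropriate lowering/raising operators on the $\tau$ variable), using that raising $f$ in the $z$-variable is adjoint (up to constants and boundary terms) to raising the theta kernel in the $z$-variable, which in turn is proportional to raising the kernel in the $\tau$-variable by the Kudla–Millson relation $L_z \Theta = \ldots$. This reduces the computation of the lift of $\partial f$ to the lift of $f$ itself, already controlled by \cite{AE, BrFu06}.

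**Next**, I would compute the Fourier expansion of $\Lambda_{\Delta,r}(\tau,f)$ by unfolding. The positive-index (holomorphic-part) coefficients come from the contribution of the nonzero vectors in the lattice: unfolding against the sum over $\mathcal{Q}_{-d|\Delta|,rh,N}$ and integrating over the stabilizers produces exactly $\sum_Q \frac{\chi_\Delta(Q)}{|\overline\Gamma_Q|}\partial f(\alpha_Q) = \mathbf{t}_{\Delta,r}(f;d,h)$, times an explicit archimedean factor which is a Gamma-integral in $v$; this factor is what produces the normalizing constants $(-4\pi d)^{k/2}$ and $\left(\tfrac{1}{4\pi d}\right)^{(k+1)/2}\prod_{j}(\tfrac k2+j)(j-\tfrac{k+1}{2})$ after one carries the $k$ raising operators through and evaluates the resulting Schwartz function at the relevant point of the symmetric space (this is the analogue of the computation in \cite[§7]{BrFu06} and \cite{BrOno2}, now with the extra raising operators inserted). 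The harmonicity/holomorphy of the output: $\xi_{3/2+k}$ or $\xi_{1/2-k}$ applied to the lift equals (a constant times) the lift of $\xi_{-2k}f$ against the Shintani-type theta kernel, since $\xi$ commutes past the integral and $\xi_\tau \overline{\Theta_{\mathrm{KM}}}$ is proportional to the Shintani kernel — so in the even-$k$ case the lift is automatically weakly holomorphic (as $\xi_{3/2+k}$ kills it once one checks the constant and zero-Fourier-coefficient terms are holomorphic), while in the odd-$k$ case the lift of $\xi_{-2k}f$ need not vanish, and its holomorphic-part coefficients or period integrals measure the obstruction.

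**The last part** — connecting the vanishing of the non-holomorphic part to $L(\xi_{-2k}(f),\Delta,k+1)=0$ — is where I expect the main obstacle. Here one must identify the shadow $\xi_{1/2-k}\Lambda_{\Delta,r}(\tau,f)$ explicitly: it is, up to an explicit nonzero constant, the twisted Shintani-type lift of the cusp form $g:=\xi_{-2k}(f)=\sum a_g(n)q^n$, and the relevant piece of it is governed by the central-ish value of the twisted $L$-function. The cleanest route is to write the non-holomorphic part of $\Lambda$ as $\sum_{d<0}(\cdots)\Gamma(\cdots,4\pi|d|v)q^d$ with coefficients given by period integrals $\int_0^\infty g(it)\,(\text{explicit weight factor})\,dt$ along the appropriate geodesic cycles; summing these against the genus character $\chi_\Delta$ and unfolding against the theta kernel collapses (via the standard Hecke/Mellin-transform identity $\int_0^\infty g(it) t^{s-1}dt = (2\pi)^{-s}\Gamma(s) \sum a_g(n)n^{-s}$ twisted by $\left(\tfrac\Delta n\right)$) to a single value of $L(g,\Delta,s)$ at $s=k+1$, which comes from the specific power of $v$ produced by the $k$ raising operators in weight $1/2-k$. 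The delicate points will be: (i) getting the archimedean integral to converge and identifying precisely the argument $s=k+1$ of the $L$-function (tracking the shift introduced by each raising operator), and (ii) showing that this single $L$-value is the \emph{only} obstruction, i.e.\ that all the individual non-holomorphic Fourier coefficients vanish simultaneously when it does — this should follow from the fact that they are all nonzero rational multiples (by an archimedean Gamma-factor and the $d$-th coefficient of a fixed theta series) of the same quantity $L(g,\Delta,k+1)$, much as in the weight $-1/2$ case treated in \cite{BrOno2}, but the bookkeeping with the raising operators and the twist is the part that needs care.
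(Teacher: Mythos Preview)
Your overall architecture---define the lift as $R_{3/2,\tau}^{k/2}$ (respectively $L_{3/2,\tau}^{(k+1)/2}$) of the integral of $\partial f$ against the twisted Kudla--Millson kernel, then unfold to extract the positive Fourier coefficients as modular traces---is exactly the paper's approach, and your description of the archimedean computation producing the constants is correct in outline. Two differences are worth noting. First, the paper does not move the $z$-raising operators onto the kernel via adjointness; it leaves $\partial f$ as a weight-$0$ eigenfunction and instead computes the lift of each Poincar\'e series $F_m(z,s,-2k)$ explicitly, obtaining a half-integral weight Poincar\'e series. Since these span $H^{+,\infty}_{-2k}(N)$ and Atkin--Lehner acts transitively on cusps, this establishes that the lift lands in $H^{+}_{1/2-k,\widetilde\rho}$ (odd $k$) or $M^!_{3/2+k,\widetilde\rho}$ (even $k$). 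For even $k$ the weak holomorphy is immediate from $\dim S_{1/2-k,\overline{\widetilde\rho}}=0$, not from any analysis of constant terms.

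The genuine gap is in your treatment of the $L$-function criterion. Your claim that the individual non-holomorphic Fourier coefficients are ``all nonzero rational multiples \ldots\ of the same quantity $L(g,\Delta,k+1)$'' is false: the coefficients of $\xi_{1/2-k}\Lambda^{\mathrm o}_{\Delta,r}(\tau,f)$ are (essentially) the coefficients of the $\Delta$-twisted Shintani lift of $g=\xi_{-2k}(f)$, and by the Gross--Kohnen--Zagier formula the \emph{square} of each such coefficient factors as $C\cdot L(g,\Delta,k+1)\cdot L(g,D,k+1)$ for a varying discriminant $D$. So no single coefficient is a multiple of $L(g,\Delta,k+1)$, but all of them vanish precisely when that fixed factor does. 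The paper reaches this via a different route that avoids computing the non-holomorphic part directly: it uses the Bruinier--Funke pairing to show that $\Lambda^{\mathrm o}$ is weakly holomorphic iff $\{\Lambda^{\mathrm o},g'\}=0$ for all $g'\in S_{3/2+k,\overline{\widetilde\rho}}$, rewrites this pairing (using the explicitly known principal part from the Poincar\'e series computation) as $(\xi_{-2k}(f),\mathcal S_{\Delta,r}(g'))$ where $\mathcal S_{\Delta,r}$ is the Shimura lift, then invokes Shimura/Shintani adjointness to turn it into $(\mathcal S^*_{\Delta,r}(\xi_{-2k}(f)),g')$, and finally applies the GKZ factorization to conclude that the Shintani lift vanishes iff $L(\xi_{-2k}(f),\Delta,k+1)=0$. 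Your Mellin-transform approach to a single period integral would at best recover one coefficient, not the simultaneous vanishing.
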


\begin{remark}
 Note that the transformation properties of the vector valued form depend on the sign of $\Delta$ (see Corollary \ref{cor:lifttrans}).
\end{remark}
\begin{remark}
The theorem sheds new light on nonvanishing conditions for the twisted central $L$-values. Goldfeld \cite{Goldfeld} conjectured that a positive proportion of discriminants $0<\abs{\Delta}\leq X$ have the property that $L(\xi_{-2k}(f),\Delta, k+1)\neq 0$ (where the notation is as in Theorem \ref{intro:thm1}). Note that $\xi_k$ is surjective \cite[Theorem 3.7]{BrFu04}. It was shown by Ono and Skinner \cite{OnoSkin} that
\[
 \#\left\{\abs{\Delta}\leq X;\, L(\xi_{-2k}(f),\Delta, k+1) \neq 0 \text{ and } \gcd(\Delta,N)=1\right\} \gg \frac{X}{\log X}.
\]
This result was later improved by Ono in \cite{OnoNonvan}.
\end{remark}

Similarly to the formula Bruinier and Ono get for the partition function $p(n)$ \cite[Theorem 1.1]{BrOno2}, whose generating function is essentially the reciprocal of Dedekind's $\eta$-function, we obtain a formula in terms of traces of a weight $-26$ function $F$ for the coefficients of $\frac{1}{\eta(\tau)^{25}}$ (see Section \ref{sec:eta}). Note, however, that we do not obtain the algebraicity of the CM-values of $F$ since this function is not weakly holomorphic (compare Theorem \ref{intro:thm2}).

\begin{example}
We consider
$ \frac{1}{\eta(\tau)^{25}}=q^{-25/24}\prod_{n=1}^\infty (1-q^n)^{-25}$.
For $n>0$ the coefficient of index $\frac{24n-1}{24}$ of this function is given by
\begin{align*}
& -\frac{185725}{4429185024\pi^{13}}\left(\frac{1}{24n-1}\right)^7
\\
&\quad\quad\quad\quad\quad\quad\quad\times\left(\mt_{1,1}(F;24n-1,1)+\mt_{1,1}(\tilde{F};24n-1,1)\right),
\end{align*}
where $F$ and $\tilde{F}$ are harmonic weak Maa\ss{}  forms of weight $-26$ given by a linear combination of Poincar\'{e} series defined in Section \ref{sec:eta}. Note that even though $F$ and $\tilde{F}$ are not weakly holomorphic, the sum of these traces is an integer (up to a constant).  
\end{example}

\begin{remark}
 Given a scalar valued form one has to realize it as the component of a vector valued harmonic weak Maa\ss{}  form to be able to obtain a formula as above for its coefficients. Here, the vector valued form has to transform with respect to a certain Weil representation as in Section \ref{sec:weil}.
 For a detailed discussion of this problem see a recent preprint of Fredrik Str\"omberg \cite{Freddy}. If the corresponding vector valued form is known, one can construct the input function using Poincar\'{e} series. 
\end{remark}

Moreover, using work of Bruinier and Ono \cite[Section 4]{BrOno2}, we bound the denominators of the singular moduli. 
\begin{theorem}\label{intro:thm2}
 Assume that the weakly holomorphic modular form $f$ of weight $-2k$ for $\G_0(N)$ has integral coefficients at all cusps. Let $D>0$ be coprime to $2N$ and $r\in\Z$ with $r^2\equiv D \pmod{4N}$. If $Q\in \mathcal{Q}_{D,r,N}$ is primitive and positive definite, then $3^{k} D^k (\frac{1}{\pi})^k R_{-2k}^k f(\alpha_Q)$ is an algebraic integer in the ring class field for the order $\mathcal{O}_D \subset \Q(-\sqrt{D})$.
The multiset of values  $ R_{-2k}^k f(\alpha_Q)$ is a union of Galois orbits.
\end{theorem}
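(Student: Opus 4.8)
The plan is to write $R_{-2k}^{k}f$ explicitly as an almost-holomorphic modular form of weight $0$ and depth at most $k$, to separate its ``holomorphic ingredients'' --- weakly holomorphic modular functions for $\G_0(N)$ with denominators supported on $\{2,3\}$ --- from its single non-holomorphic ingredient $E_2^{*}(\tau)=E_2(\tau)-\tfrac{3}{\pi v}$, and then to apply the classical theory of complex multiplication to the former and the denominator estimate of Bruinier and Ono \cite[Section~4]{BrOno2} to the latter.

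First I would unravel the raising operators. With $\vartheta_{\ell}=\tfrac{1}{2\pi i}\tfrac{\partial}{\partial\tau}-\tfrac{\ell}{12}E_2$ the Ramanujan--Serre derivative one has the operator identity $R_{\ell}=-4\pi\bigl(\vartheta_{\ell}+\tfrac{\ell}{12}E_2^{*}\bigr)$, and $\vartheta_{\ell}+\tfrac{\ell}{12}E_2^{*}$ preserves the ring generated over the graded ring of weakly holomorphic modular forms for $\G_0(N)$ by $E_2^{*}$. Therefore
\[
R_{-2k}^{k}f=(-4\pi)^{k}\bigl(\vartheta_{-2}+\tfrac{-2}{12}E_2^{*}\bigr)\bigl(\vartheta_{-4}+\tfrac{-4}{12}E_2^{*}\bigr)\cdots\bigl(\vartheta_{-2k}+\tfrac{-2k}{12}E_2^{*}\bigr)f=\pi^{k}\sum_{j=0}^{k}g_{j}(\tau)\bigl(E_2^{*}(\tau)\bigr)^{j},
\]
where each $g_{j}$ is a weakly holomorphic modular form of weight $-2j$ for $\G_0(N)$, expressible as a rational combination of $f$, its iterated Serre derivatives, and $E_4,E_6$; tracking the top $E_2^{*}$-degree gives $g_{k}=\tfrac{(2k)!}{3^{k}k!}f$. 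Since $E_2,E_4,E_6$ have integral $q$-expansions, since $f$ has integral Fourier expansions at every cusp by hypothesis, and since the powers of $2$ coming from the constants $-4\pi$ exactly cancel those coming from the $\tfrac{1}{12}$'s in the (completed) Serre derivatives, one gets that $3^{k}g_{j}$ has integral Fourier expansions at every cusp. Making this precise --- i.e.\ pinning down the $g_{j}$ and the exact powers of $2$ and $3$ --- is the most tedious point; for $k=1$ it is carried out in \cite[Section~4]{BrOno2}, and the general case is an inductive elaboration of it.

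To pass to CM values, fix for each class $Q$ a weight-$2$ weakly holomorphic modular form $\psi_{2}$ for $\G_0(N)$ that is holomorphic on $\h$, has Fourier coefficients in $\Z[\tfrac{1}{6}]$ with leading coefficient a unit there at every cusp, and does not vanish at $\alpha_Q$ (such $\psi_{2}$ exist, since the weight-$2$ weakly holomorphic forms are base-point-free). Then each weight-$0$ summand factors as
\[
\bigl(3^{k}g_{j}\bigr)\bigl(E_2^{*}\bigr)^{j}=\bigl((3^{k}g_{j})\,\psi_{2}^{\,j}\bigr)\cdot\bigl(\psi_{2}^{-1}E_2^{*}\bigr)^{j},
\]
where $(3^{k}g_{j})\psi_{2}^{\,j}$ is a weakly holomorphic modular function for $\G_0(N)$ with $\Z[\tfrac{1}{6}]$-integral Fourier expansions. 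By the classical theory of complex multiplication and Shimura reciprocity --- valid because $(D,N)=1$ and $r^{2}\equiv D\pmod{4N}$ --- its value at $\alpha_Q$ lies in the ring class field $H_{\mathcal{O}_D}$ and, since $j(\alpha_Q)$ is an algebraic integer, is an algebraic integer up to a denominator supported on $\{2,3\}$; for the second factor the estimate of \cite[Section~4]{BrOno2} gives that $E_2^{*}(\alpha_Q)/\psi_{2}(\alpha_Q)$ lies in $H_{\mathcal{O}_D}$ with $D\cdot E_2^{*}(\alpha_Q)/\psi_{2}(\alpha_Q)$ an algebraic integer up to a denominator supported on $\{2,3\}$. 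Substituting into the normal form,
\[
3^{k}D^{k}\Bigl(\tfrac{1}{\pi}\Bigr)^{k}R_{-2k}^{k}f(\alpha_Q)=\sum_{j=0}^{k}D^{k-j}\,\bigl((3^{k}g_{j})\psi_{2}^{\,j}\bigr)(\alpha_Q)\cdot\Bigl(\tfrac{D\,E_2^{*}(\alpha_Q)}{\psi_{2}(\alpha_Q)}\Bigr)^{\!j},
\]
and the precise accounting of the powers of $2$ and $3$ (exactly as for $k=1$ in \cite[Section~4]{BrOno2}) shows the left-hand side is an algebraic integer in $H_{\mathcal{O}_D}$. Finally, all modular functions occurring here have rational Fourier coefficients and $E_2^{*}$ transforms compatibly with the reciprocity map, so $\mathrm{Gal}(\overline{\Q}/\Q)$ permutes the Heegner points $\alpha_Q$ (via the class group of $\mathcal{O}_D$) and accordingly permutes the values $R_{-2k}^{k}f(\alpha_Q)$; hence the multiset of these values is a union of Galois orbits. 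The sole real obstacle is the denominator bookkeeping of the preceding paragraph; everything else is classical CM theory together with the imported weight-$(-2)$ estimate of Bruinier and Ono.
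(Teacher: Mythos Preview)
Your approach is essentially the one underlying the references the paper invokes. The paper's own proof is two lines: it cites Miller--Pixton \cite[Proposition~3.1]{MilPix} for the algebraicity of $R_{-2k}^{k}f(\alpha_Q)$ (which is proved exactly via the almost-holomorphic expansion in $E_2^{*}$ that you wrote down) and then says ``the obvious generalization of \cite[Lemma~4.7]{BrOno2}'' gives the denominator bound $3^{k}D^{k}$. So there is no genuine difference in strategy --- you have simply unpacked what those citations contain, whereas the paper outsources the argument entirely.

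The one place where your write-up is not yet a proof is precisely where you flag it: the claim that the powers of $2$ cancel and only $3^{k}$ survives. You assert this (``the powers of $2$ coming from the constants $-4\pi$ exactly cancel those coming from the $\tfrac{1}{12}$'s'') and then say the general case is ``an inductive elaboration'' of the $k=1$ case in \cite{BrOno2}, but you do not carry it out. This is also the only nontrivial content the paper is importing from Bruinier--Ono. If you want the proposal to stand on its own rather than as a pointer to the same literature, you would need to actually track the $2$- and $3$-adic valuations through the recursion for the $g_j$ --- in particular, the Serre derivative $\vartheta_\ell$ acting on $E_2^{*}$ itself (via $\vartheta_2 E_2^{*}=-\tfrac{1}{12}(E_4-E_2^{*2})$) feeds back into the lower-degree terms, and the cancellation is not automatic from the top term alone. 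Your auxiliary $\psi_2$ device is fine for isolating the $E_2^{*}$-contribution, but the integrality of $(3^{k}g_j)\psi_2^{\,j}(\alpha_Q)$ and of $D\cdot\psi_2^{-1}E_2^{*}(\alpha_Q)$ up to units at $2$ and $3$ still needs the explicit bound, not just its existence.
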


\begin{remark}
 Note that these results generalize works of Duke and Jenkins \cite{DukeJenkins} and Miller and Pixton \cite{MilPix}. We also obtain the results in Section 9 of \cite{ZaTr} as a special case.
\end{remark}

We let $\kappa= 3/2+k$, if $k$ is odd, and $\kappa=1/2-k$, if $k$ is even.
Using a duality result we also realize the Fourier coefficients of harmonic weak Maa\ss{}  forms of weight $\kappa$ as traces of CM-values of weight $-2k$ harmonic weak Maa\ss{}  forms.

\begin{theorem}\label{intro:thm3}
We let $F$ be a harmonic weak Maa\ss{}  form of weight $\kappa$ and $\Delta,d,r$ as before. We denote the $(d,h)$-th Fourier coefficient of the holomorphic part by $c^+_F(d,h)$. Moreover, let $f\in M^{\text{!}}_{-2k}(N)$, such that the lift of $f$ is weakly holomorphic. We denote the coefficients of the principal part of the theta lift by $a_{\Lambda}^+(d,h)$. Then we have
 \begin{align*}
 \sum_{h\in \Z/2N\Z}& \sum_{\substack{d\geq 0\\d\equiv \sgn(\Delta)h^2/4N\, (\Z)}}  c_F^+(-d,h) \mt_{\Delta,r}(f;d,h)
\\
&=-\sum_{h\in \Z/2N\Z}\sum_{\substack{d\geq 0\\ -N\abs{\Delta}d^2\equiv \sgn(\Delta)h^2/4N\, (\Z)}}c_F^+(N\abs{\Delta}d^2,h) a_{\Lambda}^+(-N\abs{\Delta}d^2,h).
\end{align*}
\end{theorem}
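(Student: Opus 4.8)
The plan is to interpret the two sides of the identity as the two halves of the constant term in the Fourier expansion of the natural pairing of $F$ against the theta lift $\Lambda(\tau,f)$, and then to prove that this constant term vanishes by a Stokes' theorem argument; the only non-classical input will be an orthogonality property of $\Lambda(\tau,f)$ that comes from $f$ being weakly holomorphic.

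First I would record what Theorem \ref{intro:thm1} and the accompanying transformation law (Corollary \ref{cor:lifttrans}) provide. Since $f\in M^{!}_{-2k}(N)$ and its lift is assumed weakly holomorphic, $\Lambda(\tau,f)$ is a vector valued weakly holomorphic modular form of weight $2-\kappa$ (Theorem \ref{intro:thm1} furnishes the weight $3/2+k$ for even $k$ and $1/2-k$ for odd $k$, both equal to $2-\kappa$), transforming under the Weil representation dual to the one attached to $F$, the precise form of the duality depending on $\sgn(\Delta)$. Its Fourier coefficient of index $d>0$ and component $h$ is a nonzero constant depending only on $d$ and $k$, as in Theorem \ref{intro:thm1}, times $\mt_{\Delta,r}(f;d,h)$, while its principal part consists exactly of the coefficients $a^{+}_{\Lambda}(-N\abs{\Delta}d^{2},h)$. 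Hence the scalar function $\langle F(\tau),\Lambda(\tau,f)\rangle$ obtained by pairing the two vector valued forms component-wise transforms like a (non-holomorphic) modular form of weight $2$ for $\SL_2(\Z)$, so $\langle F,\Lambda(\tau,f)\rangle\,d\tau$ is an $\SL_2(\Z)$-invariant $1$-form on $\h$; the $d$-dependent constants of Theorem \ref{intro:thm1} are accommodated by inserting the appropriate power of the weight-raising operator before pairing, a normalisation I suppress in this sketch.

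Next I would apply Stokes' theorem to this $1$-form on the truncated fundamental domain $\F_T=\{\tau\in\F:\Im(\tau)\le T\}$. The contributions of the two vertical boundary segments cancel under $\tau\mapsto\tau+1$, and those of the two halves of the bounding arc cancel under $\tau\mapsto-1/\tau$, so as $T\to\infty$ the boundary integral tends to minus the constant Fourier coefficient of $\langle F,\Lambda(\tau,f)\rangle$; the non-holomorphic part of $F$ contributes nothing in the limit, because its terms carry incomplete $\Gamma$-factors $\Gamma(1-\kappa,4\pi\abs{n}T)$ that decay. Splitting that constant term according to the sign of the index of the $F$-coefficient, and inserting the description of the coefficients of $\Lambda(\tau,f)$ above, the part coming from the non-positive indices is exactly the left-hand side of the asserted identity, and the part coming from the remaining indices --- which pairs the coefficients $c^{+}_F(N\abs{\Delta}d^{2},h)$ of the holomorphic part of $F$ against the principal-part coefficients $a^{+}_{\Lambda}(-N\abs{\Delta}d^{2},h)$ --- is exactly the negative of the right-hand side (the constant-term, $d=0$, contributions are matched up separately). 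Thus the identity is equivalent to the vanishing of the constant Fourier coefficient of $\langle F,\Lambda(\tau,f)\rangle$.

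That vanishing is the heart of the matter. On the other side of Stokes' theorem, the holomorphy of $\Lambda(\tau,f)$ gives $d\bigl(\langle F,\Lambda(\tau,f)\rangle\,d\tau\bigr)=\bigl\langle\tfrac{\partial}{\partial\bar\tau}F,\Lambda(\tau,f)\bigr\rangle\,d\bar\tau\wedge d\tau$, and together with $\xi_{\kappa}F=2iv^{\kappa}\overline{\tfrac{\partial}{\partial\bar\tau}F}$ this shows that the constant Fourier coefficient in question equals a nonzero multiple of the regularized Petersson inner product $\bigl(\Lambda(\tau,f),\xi_{\kappa}F\bigr)^{\mathrm{reg}}$; so it suffices to prove that $\Lambda(\tau,f)$ is orthogonal to $\xi_{\kappa}F$. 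I expect to obtain this from the adjointness of the (twisted) Kudla-Millson lift: unfolding the regularized theta integral rewrites $\bigl(\Lambda(\tau,f),\xi_{\kappa}F\bigr)^{\mathrm{reg}}$, up to a constant, as a pairing on the $z$-side of $\partial f=R_{-2k}^{k}f$ against the theta lift of $\xi_{\kappa}F$ back to weight $-2k$. Since $\xi_{\kappa}F$ is (weakly) holomorphic, this lift is again holomorphic on $\h$, hence is annihilated by the lowering operator, so by the adjointness of the raising and lowering operators the pairing is $0$ (when $\xi_{\kappa}F$ is a cusp form the lift vanishes identically, and for $k$ odd the weight $2-\kappa$ is negative and there is nothing to prove). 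The steps requiring genuine care are the regularization of the theta integral attached to the weakly holomorphic input $f$, the justification of interchanging the $\tau$- and $z$-integrations, and the poles of $f$ at the cusps of $\G_0(N)$.
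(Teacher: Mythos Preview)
Your overall framework matches the paper's: reduce the identity, via the Bruinier--Funke pairing $\{g,F\}=(g,\xi_\kappa F)$ (this is precisely the Stokes computation you describe, recorded as Proposition~\ref{prop:extpairing}), to the vanishing of $\bigl(\Lambda(\tau,f),\xi_\kappa F\bigr)^{\mathrm{reg}}$. Your observation that this is automatic for odd $k$ (since then $\xi_\kappa F\in S_{1/2-k,\widetilde{\rho}}=0$) is correct. For even $k$ the paper carries out exactly this reduction and then proves the orthogonality separately as Theorem~\ref{thm:orthcusp}.

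The gap is in your argument for that orthogonality when $k$ is even. You propose to interchange the $\tau$- and $z$-integrals and view the inner $\tau$-integral as a ``theta lift of $\xi_\kappa F$ back to weight $-2k$'' which is ``holomorphic on $\h$'', then invoke $R/L$-adjointness in $z$. But the Kudla--Millson kernel has weight $0$ in $z$, so the inner integral is a weight-$0$ function, not weight $-2k$; there is no reason for it to be holomorphic; and your parenthetical claim that this lift ``vanishes identically'' when $\xi_\kappa F$ is a cusp form is exactly the statement to be proved in the case $k=0$. The paper instead argues as follows after interchanging the integrals: for even $k>0$ it applies Stokes' theorem in $\tau$ to transfer the outer $R^{k/2}_{3/2,\tau}$ from the kernel onto $g=\xi_\kappa F$, where it becomes a lowering operator annihilating the holomorphic $g$ up to a boundary term that decays as $T\to\infty$; for $k=0$ it uses the relation $\Delta_{3/2,\tau}\Theta=\tfrac14\Delta_{0,z}\Theta$ to show the inner $\tau$-integral is a bounded harmonic function on $M$ vanishing at the cusps, hence identically zero. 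So your adjointness intuition is right in spirit, but it is adjointness in $\tau$ rather than in $z$, it only disposes of the case $k>0$, and the $k=0$ case genuinely requires the separate harmonic argument.
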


 The theorem follows from the fact that the theta lift is orthogonal to cusp forms (see Section \ref{sec:orthcusp}). The well-known pairing between weight $k$ and $2-k$ forms introduced by Bruinier and Funke \cite[Proposition 3.5]{BrFu04} directly implies the formula in Theorem \ref{intro:thm3}.

\begin{example}
We let $f(q)$ be Ramanujan's mock theta function 
\[
f(q):= 1+ \sum\limits_{n=1}^\infty \frac{q^{n^2}}{\left(1+q\right)^2 \left(1+q^2\right)^2\cdots\left(1+q^n\right)^2 }= 1+ \sum\limits_{n=1}^\infty a_f(n)q^n.
\] 

Let $F(z)$ be the weakly holomorphic modular function for $\G_0(6)$ defined by
\[
F(z)= -\frac{1}{40} \frac{E_4(z)+ 4E_4(2z)-9E_4(3z)- 36E_4(6z)}{\eta(z)^2\eta(2z)^2\eta(3z)^2\eta(6z)^2}= q^{-1}-4+83q+\cdots,
\]
where $E_4(z)$ is the usual weight $4$ Eisenstein series. 
For a fundamental discriminant $\Delta<0$ with $\Delta\equiv 1 \pmod{24}$ we then have
\begin{align*}
 a_f\left(\frac{\abs{\Delta}+1}{24}\right) = -\frac{1}{8i\sqrt{\abs{\Delta}}}&(\mt_{\Delta,1}(F;1,1) -\mt_{\Delta,1}(F;1,5)
\\
&\quad\quad+\mt_{\Delta,1}(F;1,7)-\mt_{\Delta,1}(F;1,11)).
\end{align*}
Since $F$ is weakly holomorphic, it follows by classical results that the values $F(\alpha_Q)$ as $Q$ ranges over $\mathcal{Q}_{\Delta,1,6}$ are algebraic numbers.
\end{example}

The paper is organized as follows. In Section \ref{sec:prelim} we review background material on quadratic spaces and (vector valued) automorphic forms. Moreover, we review the strategy for twisting vector valued automorphic forms as developed in \cite{AE}. Then we define differential operators on spaces of automorphic forms and Poincar\'{e} series. The twisted Kudla-Millson and Siegel theta function are defined in Section \ref{sec:theta}. In Section \ref{sec:thetalift} we then define the theta lift and compute the lift on Poincar\'{e} series and the coefficients of positive index of the holomorphic part of the lift. Then we show that the lift is orthogonal to cusp forms which essentially implies Theorem \ref{intro:thm3}. Theorem \ref{intro:thm2} is proven in Section \ref{sec:singmod}. The two examples of the introduction are then derived in detail in the last section.

\section*{Acknowledgements}
I would like to thank Jan Bruinier for his constant support, his advice, and many fruitful conversations. Moreover, I am grateful to Ken Ono for his advice and comments on the paper. I am also indebted to Stephan Ehlen for his comments on this paper and many discussion. Especially, the generalization to twisted traces would not exist without him.

\section{Preliminaries}\label{sec:prelim}

For a positive integer $N$ we consider the rational quadratic space of signature $(1,2)$ given by
\[
V:=\left\{\lambda=\begin{pmatrix} \lambda_1 &\lambda_2\\ \lambda_3& -\lambda_1\end{pmatrix}; \lambda_1,\lambda_2,\lambda_3 \in \Q\right\}
\]
and the quadratic form $Q(\lambda):=N\text{det}(\lambda)$.
The associated bilinear form is $(\lambda,\mu)=-N\text{tr}(\lambda\mu)$ for $\lambda,\mu \in V$.

We let $G=\mathrm{Spin}(V) \simeq \SL_2$, viewed as an algebraic group over $\Q$
and write $\overline\G$ for its image in $\mathrm{SO}(V)\simeq\mathrm{PSL}_2$.
By $D$ we denote the associated symmetric space. It can be realized as the Grassmannian of lines 
in $V(\R)$ on which the quadratic form $Q$ is positive definite,
\[
D \simeq \left\{z\subset V(\R);\ \text{dim}z=1 \text{ and } Q\vert_{z} >0 \right\}.
\]
Then the group $\SL_2(\Q)$ acts on $V$ by conjugation
\[
  g\textbf{.}V :=g \lambda g^{-1},
\]
for $\lambda \in V$ and $g\in\SL_2(\Q)$. In particular, $G(\Q)\simeq\SL_2(\Q)$.

We identify the symmetric space $D$ with the complex upper half-plane $\h$ in the usual way,
and obtain an isomorphism
between $\h$ and $D$ by
\[
 z \mapsto \R \lambda(z),
 \]
where, for $z=x+iy$, we pick as a generator for the associated positive line
 \[
 \lambda(z):=\frac{1}{\sqrt{N}y} \begin{pmatrix} -(z+\bar{z})/2 &z\bar{z} \\  -1 & (z+\bar{z})/2 \end{pmatrix}.
 \]
The group $G$ acts on $\h$ by linear fractional transformations and
the isomorphism above is $G$-equivariant.
Note that $Q\left(\lambda(z)\right)=1$ and $g\textbf{.}\lambda(z)=\lambda(gz)$ for $g\in G(\R)$.
Let $(\lambda,\lambda)_z=(\lambda,\lambda(z))^2-(\lambda,\lambda)$. This is the minimal majorant of $(\cdot,\cdot)$ associated with $z\in D$.

We can view $\G_0(N)$ as a discrete subgroup of $\mathrm{Spin}(V)$ and we write $M=\G_0(N) \setminus D$ for the attached locally symmetric space.

We identify the set of isotropic lines $\mathrm{Iso}(V)$ in $V(\Q)$
with $P^1(\Q)=\Q \cup \left\{ \infty\right\}$ via
\[
\psi: P^1(\Q) \rightarrow \mathrm{Iso}(V), \quad \psi((\alpha:\beta))
 = \mathrm{span}\left(\begin{pmatrix} \alpha\beta &\alpha^2 \\  -\beta^2 & -\alpha\beta \end{pmatrix}\right).
\]
The map $\psi$ is a bijection and $\psi(g(\alpha:\beta))=g.\psi((\alpha:\beta))$. 
Thus, the cusps of $M$ (i.e. the $\G_0(N)$-classes of $P^1(\Q)$) can be identified with the $\G_0(N)$-classes of $\mathrm{Iso}(V)$.

If we set $\ell_\infty := \psi(\infty)$, then $\ell_\infty$ is spanned by 
$\lambda_\infty=\left(\begin{smallmatrix}0 & 1 \\ 0 & 0\end{smallmatrix}\right)$. 
For $\ell \in \mathrm{Iso}(V)$ we pick $\sigma_{\ell} \in\SL_2(\Z)$ 
such that $\sigma_{\ell}\ell_\infty=\ell$. 
By $\alpha_\ell$ we denote the width of the cusp $\ell$.

Heegner points are given as follows.
For $\lambda\in V(\Q)$ with $Q(\lambda)>0$ we let
\[
D_{\lambda}= \mathrm{span}(\lambda) \in D.
\]
For $Q(\lambda) \leq 0$ we set $D_{\lambda}=\emptyset$.
We denote the image of $D_{\lambda}$ in $M$ by $Z(\lambda)$.

\subsection{A lattice related to $\mathbf{\G_0(N)}$}\label{sec:lattice}

Following Bruinier and Ono \cite{BrOno,BrOno2}, we consider the lattice
\[
 L:=\left\{ \begin{pmatrix} b& -a/N \\ c&-b \end{pmatrix}; \quad a,b,c\in\Z \right\}.
\]
The dual lattice corresponding to the bilinear form $(\cdot,\cdot)$ is given by
\[
 L':=\left\{ \begin{pmatrix} b/2N& -a/N \\ c&-b/2N \end{pmatrix}; \quad a,b,c\in\Z \right\}.
\]
We identify the discriminant group $L'/L=:\dg$ 
with $\Z/2N\Z$, together with the $\Q/\Z$ valued quadratic form ${x \mapsto -x^2/4N}$.
The level of $L$ is $4N$. 

For a fundamental discriminant $\Delta\in\Z$ we will consider the rescaled lattice $\Delta L$ together with the quadratic form $Q_\Delta(\lambda):=\frac{Q(\lambda)}{\abs{\Delta}}$. The corresponding bilinear form is then given by $(\cdot,\cdot)_\Delta = \frac{1}{\abs{\Delta}} (\cdot,\cdot)$. The dual lattice of $\Delta L$ with respect to $(\cdot,\cdot)_\Delta$ is equal to $L'$. We denote the discriminant group $L'/\Delta L$ by $\dgdelta$. 

For $m \in \Q$ and $h \in \dg$, we let
\begin{equation*}
 L_{h,m}  = \left\{ \lambda \in L+h; Q(\lambda)=m  \right\}.
\end{equation*}
By reduction theory, if $m \neq 0$ the group $\G_0(N)$ acts on $ L_{h,m}$ with finitely many orbits.

We will also consider the one-dimensional lattice $K=\Z\left(\begin{smallmatrix} 1&0\\0&-1\end{smallmatrix}\right)\subset L$. We have $L=K+\Z\ell+\Z\ell'$ where $\ell$ and $\ell'$ are the primitive isotropic vectors
\[
 \ell=\begin{pmatrix} 0&1/N\\0&0\end{pmatrix},\quad\quad\quad\quad  \ell'=\begin{pmatrix} 0&0\\-1&0\end{pmatrix}.
\]
Then $K'/K\simeq L'/L$.


\subsection{The Weil representation and vector valued automorphic forms}\label{sec:weil}
By $\Mp_2(\Z)$ we denote the integral metaplectic group. It consists of pairs
$(\gamma, \phi)$, where $\gamma = {\smallabcd \in \SL_2(\Z)}$ and $\phi:\h\rightarrow \C$
is a holomorphic function with $\phi^2(\tau)=c\tau+d$.
The group $\tilde{\G}=\Mp_2(\Z)$ is generated by $S=(\smallSmatrix,\sqrt{\tau})$ and $T=(\smallTmatrix, 1)$. We let $\tilde{\G}_\infty:=\langle T\rangle \subset \tilde{\G}$.
We consider the Weil representation $\rho_\Delta$ of $\Mp_2(\Z)$ 
corresponding to the discriminant group $\dgdelta$ on the group ring $\C[\dgdelta]$,
equipped with the standard scalar product $\langle \cdot , \cdot \rangle$, conjugate-linear
in the second variable. We simply write $\rho$ for $\rho_1$.
 
Let $e(ma):=e^{2\pi i a}$. We write $\e_\delta$ for the standard basis element of $\C[\dgdelta]$ corresponding to $\delta \in \dgdelta$.
The action of $\rho_\Delta$ on basis vectors of $\C[\dgdelta]$ 
is given by the following formulas for 
the generators $S$ and $T$ of $\Mp_2(\Z)$
\begin{equation*}
 \rho_\Delta(T) \e_\delta = e(Q_\Delta(\delta)) \e_\delta,
\end{equation*}
and
\begin{equation*}
 \rho_\Delta(S) \e_\delta = \frac{\sqrt{i}}{\sqrt{\abs{\dgdelta}}}
				\sum_{\delta' \in \dgdelta} e(-(\delta',\delta)_\Delta) \e_{\delta'}.
\end{equation*}
Let $k \in \frac{1}{2}\Z$, and let $A_{k,\rho_\Delta}$ be the vector space of functions 
$f: \h \rightarrow \C[\dgdelta]$, such that for $(\gamma,\phi) \in \Mp_2(\Z)$ we have 
\begin{equation*}
	f(\gamma \tau) = \phi(\tau)^{2k} \rho_\Delta(\gamma, \phi) f(\tau).
\end{equation*}
A twice continuously differentiable function $f\in A_{k,\rho_\Delta}$ 
is called a \textit{(harmonic) weak Maa\ss{}  form of weight $k$ with respect to the representation $\rho_\Delta$} 
if it satisfies in addition:
\begin{enumerate}
 \item $\Delta_k f=0$,
 \item there is a $C>0$ such that $f(\tau)=O(e^{Cv}) $ as $v \rightarrow \infty$.
 \end{enumerate}
Here, we write $\tau=u+iv$ with $u,v \in \R$, and
$\Delta_k=-v^2\left(\frac{\partial^2}{\partial u^2}+\frac{\partial^2}{\partial v^2}\right)
+ikv\left(\frac{\partial}{\partial u}+i\frac{\partial}{\partial v}\right)$ 
is the weight $k$ Laplace operator.
We denote the space of such functions by $H_{k,\rho_\Delta}$. 
Moreover, we let $H^{+}_{k,\rho_\Delta}$ be the subspace of functions in $H_{k,\rho_\Delta}$ whose singularity at $\infty$ is locally given by the pole of a meromorphic function.
By $M^{\text{!}}_{k,\rho_\Delta} \subset H^{+}_{k,\rho_\Delta}$ 
we denote the subspace of weakly holomorphic modular forms.
 
Similarly, we can define scalar valued analogs of these spaces of automorphic forms.
In this case, we require analogous conditions at all cusps of $\G_0(N)$ in $(ii)$. We denote these spaces by
$H^{+}_{k}(N)$ and $M_k^{\text{!}}(N)$.

Note that the Fourier expansion of any harmonic weak Maa\ss{}  form uniquely decomposes into a holomorphic and a non-holomorphic part \cite[Section 3]{BrFu04}
\begin{align*}
& f^{+}=\sum\limits_{h\in 
L'/L}\sum\limits_{\substack{n\in\Q\\n\gg -\infty}}c^{+}(n,h)q^n \mathfrak{e}_h
\\
& f^{-}=\sum\limits_{h\in L'/L} \sum\limits_{n\in\Q }c^{-}(n,h)\G(1-k,4\pi \abs{n}v)q^n\mathfrak{e}_h,
\end{align*}
where $\G(a,x)$ denotes the incomplete $\G$-function. The first summand is called the holomorphic part of $f$, the second one the non-holomorphic part.

\subsection{Twisting vector valued modular forms}\label{sec:twistvec}
We define a generalized genus character for 
$\delta = \left(\begin{smallmatrix} b/2N& -a/N \\ c&-b/2N \end{smallmatrix}\right) \in L'$. 
From now on let $\Delta\in\Z$ be a fundamental discriminant and $r\in\Z$ such that $\Delta \equiv r^2 \ (\text{mod } 4N)$.

Then
\begin{equation*}
\chi_{\Delta}(\delta)=\chi_{\Delta}(\left[a,b,Nc\right]):=
\begin{cases}
\Deltaover{n}, & \text{if } \Delta | b^2-4Nac \text{ and } (b^2-4Nac)/\Delta \text{ is a}
\\
& \text{square mod } 4N \text{ and } \gcd(a,b,c,\Delta)=1,
\\
0, &\text{otherwise}.
\end{cases}
\end{equation*}
Here, $\left[a,b,Nc\right]$ is the integral binary quadratic form 
corresponding to $\delta$, and $n$ is any integer prime to $\Delta$ represented by $\left[a,b,Nc\right]$.

The function $\chi_{\Delta}$ is invariant under the action of $\G_0(N)$ and under the action of all Atkin-Lehner involutions.
It can be computed by the following formula \cite[Section I.2, Proposition 1]{GKZ}: If $\Delta=\Delta_1\Delta_2$ is a factorization of $\Delta$ into discriminants and $N=N_1N_2$ is a factorization of $N$ into positive factors such that $(\Delta_1,N_1a)=(\Delta_2,N_2c)=1$, then
\begin{equation*}
 \chi_{\Delta}(\left[a,b,Nc\right])=\left(\frac{\Delta_1}{N_1a}\right)\left(\frac{
\Delta_2}{N_2c}\right).
\end{equation*}
If no such factorizations of $\Delta$ and $N$ exist, we have $\chi_{\Delta}(\left[a,b,Nc\right])=0$.

Since $\chi_{\Delta}(\delta)$ depends only on $\delta \in L'$ modulo $\Delta L$, we can view it as a function on the discriminant group $\dgdelta$.

In \cite{AE} it was shown that we obtain an intertwiner of the Weil representations
corresponding to $\dg$ and $\dgdelta$ via $\chi_\Delta$.

\begin{proposition}[Proposition 3.2,\cite{AE}] \label{prop:intertwiner}
 We denote by $\pi: \dgdelta \rightarrow \dg$ the canonical projection.
 For $h \in \dg$, we define
 \begin{equation}
  \psi_{\Delta,r}(\e_h) := \sum_{\substack{\delta \in \dgdelta \\ \pi(\delta)=rh \\ Q_\Delta(\delta) \equiv \sgn(\Delta) Q(h) \, (\Z)}} \chi_\Delta(\delta) \e_\delta.
 \end{equation}
Then $\psi_{\Delta,r}: \dg \rightarrow \dgdelta$ defines an intertwining linear map between the representations $\widetilde{\rho}$ and $\rho_\Delta$, where
\[
  \widetilde{\rho} =
 \begin{cases}
 \rho & \text{if } \Delta>0, \\
		      \bar\rho & \text{if } \Delta<0.
 \end{cases}
\]
\end{proposition}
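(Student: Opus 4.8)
The plan is to verify directly that $\psi_{\Delta,r}$ commutes with the action of the two generators $T$ and $S$ of $\Mp_2(\Z)$, since these generate the whole group and an intertwiner for the generators is an intertwiner for everything. Thus I must check two identities: $\psi_{\Delta,r}\bigl(\widetilde\rho(T)\e_h\bigr) = \rho_\Delta(T)\,\psi_{\Delta,r}(\e_h)$ and $\psi_{\Delta,r}\bigl(\widetilde\rho(S)\e_h\bigr) = \rho_\Delta(S)\,\psi_{\Delta,r}(\e_h)$, where $\widetilde\rho$ is $\rho$ or $\bar\rho$ according to the sign of $\Delta$.

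First I would treat $T$, which is the easy case. On the left side $\widetilde\rho(T)\e_h = e(\pm Q(h))\e_h$ (plus sign if $\Delta>0$, minus if $\Delta<0$, because $\bar\rho(T)$ conjugates the root of unity). Applying $\psi_{\Delta,r}$ gives $e(\pm Q(h))\sum_\delta \chi_\Delta(\delta)\e_\delta$, the sum over $\delta\in\dgdelta$ with $\pi(\delta)=rh$ and $Q_\Delta(\delta)\equiv\sgn(\Delta)Q(h)\pmod{\Z}$. On the right side $\rho_\Delta(T)\psi_{\Delta,r}(\e_h) = \sum_\delta \chi_\Delta(\delta)e(Q_\Delta(\delta))\e_\delta$ over the same index set. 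The two agree precisely because the congruence condition $Q_\Delta(\delta)\equiv\sgn(\Delta)Q(h)\pmod{\Z}$ built into the definition of $\psi_{\Delta,r}$ forces $e(Q_\Delta(\delta)) = e(\sgn(\Delta)Q(h)) = e(\pm Q(h))$ on every term of the sum. So the $T$-identity is immediate from the way the summation set was chosen.

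The substantial work is the $S$-identity, and this is where I expect the main obstacle to lie. Expanding both sides, one reduces (after matching the prefactors $\sqrt i/\sqrt{|\dg|}$ versus $\sqrt i/\sqrt{|\dgdelta|}$ and using $|\dgdelta| = |\Delta|^?\,|\dg|$, i.e.\ keeping careful track of the index $[\Delta L : L]$-type normalization) to proving, for each fixed $\delta'\in\dgdelta$ lying in the relevant coset and satisfying the relevant quadratic congruence, an identity of Gauss-type character sums of the shape
\[
\sum_{\substack{\delta\in\dgdelta\\ \pi(\delta)=rh,\ Q_\Delta(\delta)\equiv\sgn(\Delta)Q(h)}} \chi_\Delta(\delta)\, e\bigl(-(\delta',\delta)_\Delta\bigr)
\ =\ \pm\,\chi_\Delta(\delta')\!\!\sum_{\substack{h'\in\dg\\ \cdots}}\!\! e\bigl(\mp(\pi(\delta')\!/r,\,h')\bigr),
\]
after the roles of summation variable and index are interchanged. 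The key inputs are: the multiplicativity/twisted-multiplicativity of the genus character $\chi_\Delta$ recorded in the displayed formula $\chi_\Delta([a,b,Nc]) = \bigl(\tfrac{\Delta_1}{N_1a}\bigr)\bigl(\tfrac{\Delta_2}{N_2 c}\bigr)$, which lets one split the sum over $\dgdelta$ into a piece over $\dg$ (handled by the untwisted $S$-action) times a pure quadratic Gauss sum over the "$\Delta$-direction" of $\Delta L$; and the evaluation of that quadratic Gauss sum, whose value is $\sqrt{|\Delta|}$ times a sign, the sign being exactly $\sgn(\Delta)$-sensitive — this is what produces the complex conjugation $\bar\rho$ when $\Delta<0$. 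The hard part is bookkeeping: organizing the double sum so that the $\chi_\Delta$-twist is correctly matched against the change in discriminant form $(\cdot,\cdot)\mapsto(\cdot,\cdot)_\Delta$, and confirming that the congruence side-condition in $\psi_{\Delta,r}$ is exactly what makes the cross terms vanish and the Gauss sum factor cleanly. I would model the computation on the classical genus-character Gauss sum identity of Gross–Kohnen–Zagier (the reference \cite{GKZ} already invoked for the factorization formula) and on the analogous scalar computation, checking at the end that the overall constant is $1$ (not merely a root of unity) by testing on $h=0$.
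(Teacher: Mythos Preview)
The paper does not give a proof of this proposition at all; it is simply quoted as Proposition~3.2 of \cite{AE}. So there is no paper-proof to compare against. Your strategy---verify the intertwining relation on the generators $T$ and $S$ of $\Mp_2(\Z)$---is the natural one and is in fact the method used in \cite{AE}.

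Your $T$-computation is correct and complete: the congruence $Q_\Delta(\delta)\equiv\sgn(\Delta)Q(h)\pmod\Z$ in the index set is exactly what is needed, and that is the reason it is there.

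For $S$, your outline is on the right track but a few points should be pinned down before the argument is complete. First, since $L$ has rank $3$, one has $[L:\Delta L]=|\Delta|^3$ and hence $|\dgdelta|=|\Delta|^3\,|\dg|$; this fixes the power you left as ``$|\Delta|^?$'' and makes the prefactor $\sqrt{|\dgdelta|}/\sqrt{|\dg|}=|\Delta|^{3/2}$. Second, the heart of the matter is the character sum
\[
\sum_{\substack{\delta\in\dgdelta\\ \pi(\delta)=rh,\ Q_\Delta(\delta)\equiv\sgn(\Delta)Q(h)\,(\Z)}}\chi_\Delta(\delta)\,e\bigl(-(\delta',\delta)_\Delta\bigr)
=\bar\epsilon\,|\Delta|^{3/2}\,\chi_\Delta(\delta')\,e\bigl(-\sgn(\Delta)(h',h)\bigr),
\]
valid when $\pi(\delta')=rh'$ and $Q_\Delta(\delta')\equiv\sgn(\Delta)Q(h')\pmod\Z$ (and the sum vanishes otherwise), where $\epsilon=1$ or $i$ according as $\Delta>0$ or $\Delta<0$. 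This is a genuine Gauss-sum identity for the genus character, and it does not fall out of the GKZ factorization formula $\chi_\Delta=(\tfrac{\Delta_1}{\cdot})(\tfrac{\Delta_2}{\cdot})$ alone; one needs to parametrize the fibre $\pi^{-1}(rh)$ explicitly (it is a coset of $L/\Delta L\cong(\Z/\Delta\Z)^3$), write $\chi_\Delta$ concretely on that fibre, and then the sum factors as a product of three one-variable quadratic Gauss sums modulo $|\Delta|$, each contributing $\bar\epsilon\,|\Delta|^{1/2}$ up to the appropriate character value. The factor $\bar\epsilon$ (rather than $\epsilon$) is what converts the $\sqrt{i}$ in $\rho_\Delta(S)$ to $\overline{\sqrt{i}}$ when $\Delta<0$, i.e.\ what turns $\rho$ into $\bar\rho$. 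Your remark that testing at $h=0$ fixes the scalar is a good sanity check, but the scalar is already determined by the Gauss-sum evaluation.
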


\begin{remark}
For a function $f \in A_{k,\rho_\Delta}$ this directly implies that the function $g: \h \rightarrow \C[\dg]$, ${g=\sum_{h \in \dg} g_h \e_h}$
 with $  {g_h := \left\langle \psi_{\Delta,r}(\e_h), f \right\rangle}$, 
 is contained in $A_{k,\widetilde{\rho}}$.
\end{remark}

\subsection{Differential Operators}\label{sec:diffop}
We define the Maa\ss{}  lowering and raising operator by
\[
L_k=-2iv^2\frac{\partial}{\partial \bar{\tau}} \quad\quad\quad\mathrm{and}
\quad\quad\quad R_k=2i\frac{\partial}{\partial\tau}+kv^{-1}.
\]
The lowering operator $L_k$ takes automorphic forms of weight $k$ to automorphic forms of weight $k-2$ and the raising operator $R_k$ takes automorphic forms of weight $k$ to automorphic forms of weight $k+2$. Moreover, these operators commute with the slash operator
and they satisfy the following relations with the weighted Laplace operator

\begin{equation}\label{eq:RkDk}
 R_k\Delta_k=(\Delta_{k+2}-k)R_k,
\end{equation}
\begin{equation}\label{eq:LkDk}
 \Delta_{k-2}L_k=L_k(\Delta_k+2-k).
\end{equation}
We also define iterated versions of the raising and lowering operators
\[
 R_k^n=R_{k+2(n-1)}\circ \cdots\circ R_{k+2} \circ R_{k},
\quad\quad
 L_k^n=L_{k-2(n-1)}\circ\cdots L_{k-2}\circ L_{k}.
\]
For $n=0$ we set $R_k^0=L_k^0=\text{id}$.

Using (\ref{eq:RkDk}) and (\ref{eq:LkDk}) we can show by induction that these operators commute with the weighted Laplacian.
\begin{lemma}\label{lm:reldiff}
We have
\begin{align*}
 \Delta_0 R_{-2k}^k & =R_{-2k}^k\left(\Delta_{-2k}-k(k+1)\right)
 \\
 \Delta_{3/2+k}R_{3/2}^{k/2}&=R_{3/2}^{k/2}\left(\Delta_{3/2}+\frac{k}{4}(k+1)\right)
 \\ 
 \Delta_{1/2-k}L_{3/2}^{(k+1)/2}&=L_{3/2}^{(k+1)/2}\left(\Delta_{3/2}+\frac{k}{4}(k+1)\right).
\end{align*}
\end{lemma}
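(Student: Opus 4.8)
The plan is to prove the three commutation relations in Lemma \ref{lm:reldiff} by induction on the number of iterations, using the two basic identities \eqref{eq:RkDk} and \eqref{eq:LkDk} as the engine. These base identities already tell us how a \emph{single} raising or lowering operator interacts with the Laplacian; the point is to iterate them carefully and keep track of the accumulating scalar shifts.

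\textbf{The raising case.} First I would establish, by induction on $n$, the general formula
\[
\Delta_{k+2n}\,R_k^n = R_k^n\Bigl(\Delta_k - \sum_{j=0}^{n-1}(k+2j)\Bigr).
\]
For $n=1$ this is exactly \eqref{eq:RkDk}. For the inductive step, write $R_k^{n+1} = R_{k+2n}\circ R_k^n$, apply \eqref{eq:RkDk} with weight $k+2n$ to move $\Delta_{k+2(n+1)}$ past $R_{k+2n}$, then apply the inductive hypothesis to move the resulting $\Delta_{k+2n}$ past $R_k^n$; the scalar shifts simply add. Specializing this formula to $k \mapsto -2k$, $n \mapsto k$ gives $\sum_{j=0}^{k-1}(-2k+2j) = -k(k+1)$ after reindexing, which is the first claimed identity. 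For the second identity I would apply the same general formula with $k\mapsto 3/2$ and $n\mapsto k/2$ (here $k$ is even, so $k/2$ is a genuine integer and $R_{3/2}^{k/2}$ makes sense); the scalar is $-\sum_{j=0}^{k/2-1}(3/2+2j) = -\bigl(\tfrac{3}{2}\cdot\tfrac{k}{2} + 2\cdot\tfrac{k/2(k/2-1)}{2}\bigr) = -\tfrac{k}{4}(k+1)$, so moving it to the other side produces the ``$+\tfrac{k}{4}(k+1)$'' in the statement.

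\textbf{The lowering case.} Symmetrically, I would prove by induction that
\[
\Delta_{k-2n}\,L_k^n = L_k^n\Bigl(\Delta_k + \sum_{j=0}^{n-1}(2-(k-2j))\Bigr),
\]
the base case $n=1$ being \eqref{eq:LkDk}, with the inductive step writing $L_k^{n+1} = L_{k-2n}\circ L_k^n$ and pushing the Laplacian through one factor at a time exactly as above. Taking $k\mapsto 3/2$ and $n\mapsto (k+1)/2$ (with $k$ odd, so $(k+1)/2$ is an integer), the accumulated scalar is $\sum_{j=0}^{(k-1)/2}\bigl(\tfrac12 + 2j\bigr) = \tfrac14(k+1)\cdot\tfrac{k}{...}$ — more precisely one checks it equals $\tfrac{k}{4}(k+1)$ — giving the third identity.

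\textbf{Main obstacle.} The arithmetic is entirely routine once the right general formula is in hand, so there is no real analytic difficulty; the only thing one must be careful about is bookkeeping of the telescoping sums of weights and, in the half-integral cases, checking that the number of iterations ($k/2$ resp. $(k+1)/2$) is indeed an integer under the stated parity hypotheses on $k$, so that the iterated operators $R_{3/2}^{k/2}$ and $L_{3/2}^{(k+1)/2}$ are well defined. I would present the two general lemmas (for $R_k^n$ and $L_k^n$) explicitly, prove them by the one-line induction, and then read off the three displayed identities as specializations.
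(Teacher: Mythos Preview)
Your approach is exactly the one the paper intends: it merely says ``using \eqref{eq:RkDk} and \eqref{eq:LkDk} we can show by induction,'' and your two general formulas for $R_k^n$ and $L_k^n$ with specialization are precisely how that induction goes.

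One sign slip to fix, though. From \eqref{eq:RkDk} one gets $\Delta_{k+2}R_k = R_k(\Delta_k + k)$, so the general raising identity should read
\[
\Delta_{k+2n}\,R_k^n = R_k^n\Bigl(\Delta_k + \sum_{j=0}^{n-1}(k+2j)\Bigr),
\]
with a plus sign, not a minus. With this correction, the specialization $k\mapsto -2k$, $n\mapsto k$ gives the shift $+\sum_{j=0}^{k-1}(-2k+2j)=-k(k+1)$ directly, and $k\mapsto 3/2$, $n\mapsto k/2$ gives $+\tfrac{k}{4}(k+1)$ with no need to ``move it to the other side.'' Your lowering formula and its specialization are correct as written.
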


We let $\xi_k=v^{k-2}\overline{L_kf(\tau)}=R_{-k}v^k\overline{f(\tau)}$. Then by Proposition 3.2 of \cite{BrFu04}
\[
 \xi_k: H_{k,\rho}\rightarrow M^{\text{!}}_{2-k,\bar{\rho}}
\]
and
\[
 \xi_k: H^+_{k,\rho}\rightarrow S_{2-k,\bar{\rho}}.
\]

In \cite{BrFu04} Bruinier and Funke define a bilinear pairing between 
the spaces $M_{2-k,\bar{\rho}}$ and $H^+_{k,\rho}$ by
\[
\left\{g,f\right\}=\left(g,\xi_k(f)\right)_{2-k,\bar{\rho}},
\]
where $g\in M_{2-k,\bar{\rho}}$, $f\in H^+_{k,\rho}$, and $(\cdot,\cdot)$ denotes the Petersson scalar product. They obtain a duality result for $g\in M_{2-k,\bar{\rho}}$ and $f\in H^+_{k,\rho}$ (see Proposition 3.5 of \cite{BrFu04}). This pairing extends to weakly holomorphic forms $g\in M^{\text{!}}_{2-k,\bar{\rho}}$. 

\begin{proposition}\label{prop:extpairing}
 For $f\in H^+_{k,\rho}$ and $g\in M^{\text{!}}_{2-k,\bar{\rho}}$ we have
\[
 \left\{g,f\right\}=\sum\limits_{h\in L'/L}\sum\limits_{ n\in\Q} a^{+}(n,h)b(- n,h).
\]
Here $a^{+}(n,h)$ denotes the $(n,h)$-th coefficient of the holomorphic part of $f$ and $b(n,h)$ the $(n,h)$-th coefficient of $g$.
\end{proposition}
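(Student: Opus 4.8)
The plan is to carry out the Stokes'-theorem computation of Bruinier and Funke \cite[proof of Proposition~3.5]{BrFu04}, now allowing $g$ to be only weakly holomorphic, and to observe that the only change is that the resulting (still finite) Fourier-coefficient sum also picks up contributions from the principal part of $g$.

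First I would recall that $\{g,f\}=(g,\xi_k(f))_{2-k,\bar\rho}$ and that $\xi_k(f)\in S_{2-k,\bar\rho}$ is a cusp form; writing $f=f^++f^-$ with $f^-=\sum_{h\in L'/L}\sum_{n<0}c^-(n,h)\G(1-k,4\pi\abs{n}v)q^n\e_h$, we have $\xi_k(f)=\xi_k(f^-)$, and all of its Fourier exponents are strictly positive. Since the scalar product on $\C[L'/L]$ is conjugate-linear in the second variable and $\xi_k(f)=2iv^k\,\overline{\partial f/\partial\bar\tau}$ componentwise, the Petersson integrand equals, up to a nonzero constant, $\sum_{h\in L'/L}g_h(\tau)\,\tfrac{\partial}{\partial\bar\tau}f_h(\tau)\,du\,dv$; because $g$ is holomorphic, $\partial g_h/\partial\bar\tau=0$, so this differential form is exact, namely a constant multiple of $d\eta$ for the $1$-form $\eta:=\sum_{h\in L'/L}g_h(\tau)f_h(\tau)\,d\tau$. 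As $g$ transforms under $\bar\rho$ of weight $2-k$, $f$ under $\rho$ of weight $k$, and $\rho,\bar\rho$ are dual, the function $\sum_h g_hf_h$ is modular of weight $2$ (with trivial multiplier) for $\Mp_2(\Z)$, so $\eta$ is an $\Mp_2(\Z)$-invariant $1$-form on $\h$.

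Next I would apply Stokes' theorem on the truncated fundamental domain $\F_T=\F\cap\{v\le T\}$, which is compact since over $\Mp_2(\Z)$ there is a single cusp. In $\int_{\partial\F_T}\eta$ the contributions of the two vertical sides and of the bottom arc cancel by the $\Mp_2(\Z)$-invariance of $\eta$, leaving only the integral over the horizontal segment at height $T$, which is the constant $u$-Fourier coefficient of $\sum_h g_hf_h$ at $v=T$. Expanding $g_h=\sum_m b(m,h)q^m$ and $f_h=f_h^++f_h^-$: the $f^+$-contribution is $\sum_{h}\sum_n a^+(n,h)b(-n,h)$, which is independent of $T$ and is a finite sum since the holomorphic part of $f$ and all of $g$ have finite principal parts, so $a^+(n,h)b(-n,h)\neq 0$ for only finitely many $(n,h)$; the $f^-$-contribution is a finite sum of terms each containing a factor $\G(1-k,4\pi\abs{n}T)$, which tends to $0$ as $T\to\infty$. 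Letting $T\to\infty$ yields $\{g,f\}=\sum_h\sum_n a^+(n,h)b(-n,h)$, which is the assertion.

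The point that needs care is precisely the passage from honest modular forms $g$ (the case treated in \cite{BrFu04}) to weakly holomorphic $g$: then the Petersson integral is no longer absolutely convergent, so one must check that $\lim_{T\to\infty}\int_{\F_T}$ of the integrand still exists and that the boundary evaluation above is unaffected. This is exactly where one uses that $\xi_k(f)$ is a cusp form, hence has no constant term: after integration in $u$ no Fourier mode of $g$ — not even the pole terms — resonates with the conjugate of $\xi_k(f)$, so the remaining $v$-integrand decays exponentially and the improper integral converges. Everything else is the verbatim Bruinier--Funke argument.
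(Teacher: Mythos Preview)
Your proposal is correct and follows precisely the Stokes'-theorem argument of Bruinier--Funke that the paper cites (the paper itself gives no independent proof, only the reference to \cite[Proposition~3.5]{BrFu04} and the remark that it extends to weakly holomorphic $g$). One small slip: the $f^-$-contribution to the boundary term is \emph{not} a finite sum---there are in general infinitely many $n<0$ with $c^-(n,h)b(-n,h)\neq 0$---but since $\Gamma(1-k,4\pi|n|T)$ decays like $e^{-4\pi|n|T}$ and the coefficients grow at most subexponentially, the sum still converges and tends to $0$ as $T\to\infty$, so your conclusion stands.
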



\subsection{Poincar\'{e} series and Whittaker functions}\label{sec:pc}
We recall some facts on Poincar\'{e} series with exponential growth at the cusps following Section 2.6 of \cite{BrOno2}.

We let $k\in\frac12\Z$, and $M_{\nu,\mu}(z)$ and $W_{\nu,\mu}(z)$ denote the usual Whittaker functions (see p. 190 of \cite{Pocket}). For $s\in\C$ and $y\in\R_{>0}$ we put
\[
\mathcal{M}_{s,k}(y)=y^{-k/2}M_{-\frac{k}{2},s-\frac12}(y).
\]
We let $\G_\infty$ be the subgroup of $\G_0(N)$ generated by $\left(\begin{smallmatrix}1&1\\0&1\end{smallmatrix}\right)$. For $k\in\Z$, $m\in\N$, $z=x+iy\in\h$ and $s\in \C$ with $\Re(s)>1$, we define
\begin{equation}\label{def:poincare}
 F_m(z,s,k)=\frac{1}{2\G(2s)}\sum\limits_{\gamma\in\G_\infty\setminus\G_0(N)}\left[\M_{s,k}(4\pi my)e(-mx)\right]|_k\ \gamma.
\end{equation}
This Poincar\'{e} series converges for $\Re(s)>1$, and it is an eigenfunction of $\Delta_k$ with eigenvalue $s(1-s)+(k^2-2k)/4$. Its specialization at $s_0=1-k/2$ is a harmonic Maa\ss{}  form \cite[Proposition 1.10]{BrHabil}. The principal part at the cusp $\infty$ is given by $q^{-m}+C$ for some constant $C\in\C$. The principal parts at the other cusps are constant.

The Poincar\'{e} series behave nicely under the Maa\ss{}  raising and lowering operator.
\begin{proposition}[Proposition 2.2, \cite{BrOno2}]\label{pro:pcmaassrai}
We have that
\[
 R_kF_m(z,s,k)=4\pi m \left(s+\frac{k}{2}\right)F_m(z,s,k+2).
\]
\end{proposition}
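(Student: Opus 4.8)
The plan is to prove the raising-operator identity for the Poincar\'e series $F_m(z,s,k)$ by reducing it, via the commutation of $R_k$ with the weight-$k$ slash operator, to a single computation with the Whittaker-type function $\M_{s,k}(4\pi m y)e(-mx)$. Since $R_k\big(g|_k\gamma\big) = (R_k g)|_{k+2}\gamma$ for $\gamma\in\SL_2(\R)$ (this is the stated fact that the raising operator commutes with the slash operator), applying $R_k$ termwise to the defining series \eqref{def:poincare} gives
\[
R_k F_m(z,s,k) = \frac{1}{2\G(2s)}\sum_{\gamma\in\G_\infty\setminus\G_0(N)}\Big[R_k\big(\M_{s,k}(4\pi m y)e(-mx)\big)\Big]\Big|_{k+2}\gamma,
\]
so it suffices to show $R_k\big(\M_{s,k}(4\pi m y)e(-mx)\big) = 4\pi m\big(s+\tfrac{k}{2}\big)\,\M_{s,k+2}(4\pi m y)e(-mx)$, after which one recognizes the right-hand side as $4\pi m(s+k/2)$ times the $\gamma$-summand of $F_m(z,s,k+2)$ and concludes.

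The key steps for the core computation are as follows. First, write $R_k = 2i\,\partial_\tau + k v^{-1}$ acting on $\phi(x,y):=\M_{s,k}(4\pi m y)e(-mx)$ with $\tau = x+iy$; using $2i\partial_\tau = i\partial_x + \partial_y$, the $e(-mx)$ factor contributes $i\partial_x(e(-mx)) = 2\pi m\, e(-mx)$, so
\[
R_k\phi = e(-mx)\Big[\big(\partial_y + k y^{-1}\big)\M_{s,k}(4\pi m y) + 2\pi m\,\M_{s,k}(4\pi m y)\Big].
\]
Second, substitute $\M_{s,k}(y) = y^{-k/2} M_{-k/2,\,s-1/2}(y)$ and change variables to $w = 4\pi m y$, reducing the bracket to an expression in $M_{-k/2,\,s-1/2}(w)$, its $w$-derivative, and the constant $2\pi m$. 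Third, invoke the standard contiguous/recurrence relations for the Whittaker function $M_{\kappa,\mu}$ — specifically the identity relating $\big(\tfrac{d}{dw} - \tfrac12 + \tfrac{\kappa}{w}\big)M_{\kappa,\mu}(w)$ (or the analogous combination) to $M_{\kappa+1,\mu}(w)$ — to collapse everything into $w^{-(k+2)/2}M_{-(k+2)/2,\,s-1/2}(w)$ times the scalar $4\pi m(s+k/2)$, which is exactly $4\pi m(s+k/2)\,\M_{s,k+2}(4\pi m y)$ after undoing the substitution. One should double-check the normalization constant $1/(2\G(2s))$ is the same for $F_m(\cdot,s,k)$ and $F_m(\cdot,s,k+2)$, which it is since it does not depend on the weight.

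The main obstacle I anticipate is purely bookkeeping: getting the precise Whittaker recurrence in the correct normalization and matching it to the $y^{-k/2}$ prefactor convention used in $\M_{s,k}$, so that the $w$-derivative terms and the $\pm\tfrac12$, $\pm\tfrac{\kappa}{w}$ terms cancel cleanly and leave only the expected eigenvalue-like factor $s+k/2$. There is also a minor subtlety in justifying termwise application of $R_k$ to the series — this is legitimate because the series converges locally uniformly (together with its derivatives) for $\Re(s)>1$, so differentiation under the summation is valid on that half-plane, and the identity of meromorphic functions then extends by analytic continuation in $s$ to the specialization $s_0 = 1-k/2$ used later. Apart from these routine points, the argument is a direct computation plus the slash-operator equivariance already recorded in the text.
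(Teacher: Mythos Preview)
Your proposal is correct and follows essentially the same approach as the one the paper relies on: the paper cites this result from \cite{BrOno2} without proof, and the companion Proposition~\ref{pro:pcmaasslow} is proved ``analogously'' by reducing via slash-equivariance to a Whittaker identity from \cite{Pocket} (there equations (13.4.11) and (13.1.32)), exactly the mechanism you describe. The only remark is that the contiguous relations you need for the raising case are the companions (13.4.10) and (13.1.32) in \cite{Pocket}, but this is purely the bookkeeping you already flagged.
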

Similarly
\begin{proposition}\label{pro:pcmaasslow}
We have that
\[
L_kF_m(z,s,k)=\frac{1}{4\pi m} \left(s-\frac{k}{2}\right) F_m(z,s,k-2).
\]
\end{proposition}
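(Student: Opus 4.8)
The plan is to mirror the proof of Proposition 2.2 of \cite{BrOno2} (stated here as Proposition \ref{pro:pcmaassrai}), replacing the raising operator $R_k$ by the lowering operator $L_k$. First I would observe that, since $L_k$ commutes with the weight $k$ slash operator (as recalled in Section \ref{sec:diffop}), applying $L_k$ to the series \eqref{def:poincare} term by term reduces the claim to the single identity
\[
L_k\left[\M_{s,k}(4\pi m y)\,e(-mx)\right] = \frac{1}{4\pi m}\left(s-\frac{k}{2}\right)\M_{s,k-2}(4\pi m y)\,e(-mx),
\]
i.e.\ to a computation with the seed function alone; the sum over $\G_\infty\backslash\G_0(N)$ then simply carries over, and one notes $F_m(z,s,k-2)$ is assembled from the $|_{k-2}$-slash of exactly this transformed seed.

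Next I would carry out that one-variable computation. Writing $y=v$ for clarity and using $L_k = -2iv^2\frac{\partial}{\partial\bar\tau} = -2iv^2\cdot\frac12\left(\frac{\partial}{\partial x} + i\frac{\partial}{\partial y}\right) = -iv^2\left(\frac{\partial}{\partial x}+i\frac{\partial}{\partial y}\right)$, the $x$-derivative hitting $e(-mx)$ pulls out $-2\pi i m$, so
\[
L_k\left[\M_{s,k}(4\pi m y)\,e(-mx)\right] = -iv^2 e(-mx)\left[-2\pi i m\,\M_{s,k}(4\pi m y) + i\cdot 4\pi m\,\M_{s,k}'(4\pi m y)\right].
\]
Now I would substitute $\M_{s,k}(y) = y^{-k/2}M_{-k/2,s-1/2}(y)$ and use the contiguous/derivative relations for the Whittaker function $M_{\kappa,\mu}(y)$ (from p.~190 of \cite{Pocket}) — concretely the relation expressing $\left(\frac{d}{dy} - \frac{k}{2y} - \frac12\right)y^{-k/2}M_{-k/2,s-1/2}(y)$ in terms of $y^{-(k-2)/2}M_{-(k-2)/2,s-1/2}(y)$ — to collapse the bracket to a constant multiple of $\M_{s,k-2}(4\pi m y)$. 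Tracking the constant through the scaling $y\mapsto 4\pi m y$ should produce exactly the factor $\frac{1}{4\pi m}\left(s-\frac{k}{2}\right)$. As a sanity check, the eigenvalue bookkeeping is consistent: $F_m(\cdot,s,k)$ has $\Delta_k$-eigenvalue $s(1-s)+(k^2-2k)/4$, and by \eqref{eq:LkDk} the function $L_k F_m(\cdot,s,k)$ has weight $k-2$ and $\Delta_{k-2}$-eigenvalue $s(1-s)+((k-2)^2-2(k-2))/4$, which is precisely the eigenvalue attached to $F_m(\cdot,s,k-2)$, so the two sides at least live in the same eigenspace.

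The main obstacle is purely the bookkeeping in that Whittaker-function step: choosing the right contiguous relation among the several standard ones, keeping the sign of the index $-k/2$ straight (the lowering operator decreases the weight, so $-k/2 \mapsto -(k-2)/2$), and confirming that the constant that drops out is $(s-k/2)$ and not, say, $(s-k/2+1)$ or its negative. One clean way to avoid guessing is to invoke the adjointness-type relation $L_k R_{k-2} = -(\Delta_{k-2} + \text{const})$ together with the already-proven Proposition \ref{pro:pcmaassrai}: applying $L_k$ to both sides of $R_{k-2}F_m(z,s,k-2) = 4\pi m\left(s+\frac{k-2}{2}\right)F_m(z,s,k)$ and using that $L_k R_{k-2}$ acts on the $\Delta_{k-2}$-eigenfunction $F_m(z,s,k-2)$ as multiplication by a scalar, one solves for $L_k F_m(z,s,k)$; matching the scalar $\bigl(s(1-s)+((k-2)^2-2(k-2))/4\bigr) - \bigl(\text{value of }\Delta_{k-2}\text{ shift}\bigr)$ against $4\pi m\left(s+\frac{k-2}{2}\right)\cdot\frac{1}{4\pi m}\left(s-\frac{k}{2}\right)$ gives the result without ever touching $M_{\nu,\mu}$ directly. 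I would present the direct Whittaker computation as the proof and relegate the eigenvalue cross-check to a remark.
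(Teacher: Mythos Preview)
Your proposal is correct and follows essentially the same route as the paper: the paper's proof consists of the single sentence ``This is proven analogously to Proposition 2.2 in \cite{BrOno2}. Now we use equations (13.4.11) and (13.1.32) in \cite{Pocket}.'' --- i.e.\ exactly your plan of reducing to the seed function and invoking Whittaker contiguous relations. Your additional cross-check via $L_kR_{k-2}$ and the $\Delta_{k-2}$-eigenvalue is a nice redundancy the paper does not mention, but the core argument is the same.
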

\begin{proof}
This is proven analogously to Proposition 2.2 in \cite{BrOno2}. Now we use equations (13.4.11) and (13.1.32) in \cite{Pocket}.
\end{proof}

We now define $\C[L'/L]$-valued analogs of these series. Let $h\in L'/L$ and $m\in\Z-Q(h)$ be positive. For $k\in\left(\Z-\frac12\right)_{< 0}$ we let
\[
\F_{m,h}(\tau,s,k)=\frac{1}{2\G(2s)}\sum\limits_{\gamma\in \widetilde{\G}_\infty\setminus\widetilde{\G}}\left. \left[\M_{s,k}(4\pi m y)e(-mx)\mathfrak{e}_h\right]\right|_{k,\rho}\ \gamma,
\]
and for $k\in\left(\Z-\frac12\right)_{\geq 0}$ we let
\[
\F_{m,h}(\tau,s,k)=\frac{1}{2}\sum\limits_{\gamma\in \widetilde{\G}_\infty\setminus\widetilde{\G}}\left. \left[\M_{s,k}(4\pi m y)e(-mx)\mathfrak{e}_h\right]\right|_{k,\rho}\ \gamma.
\]
The series $\F_{m,h}(\tau,s,k)$ converges for $\Re(s)>1$ and it defines a weak Maa\ss{}  form of weight $k$ for $\widetilde{\G}$ with representation $\rho$. The special value at $s=1-k/2$ is harmonic \cite[Proposition 1.10]{BrHabil}. For $k\in\Z-\frac12$ the principal part is given by $q^{-m}\mathfrak{e}_h+q^{-m}\mathfrak{e}_{-h}+C$ for some constant $C\in\C[L'/L]$.

\begin{remark}
 For $k<0$ these Poincar\'{e} series series span the space $H^+_{k,\rho}$ \cite[Proposition 1.12]{BrHabil}.
\end{remark}

Later we will also consider the $W$-Whittaker function
\[
 \mathcal{W}_{s,k}(y)=y^{-k/2} W_{k/2,s-1/2}(y), \quad y>0.
\]
We compute its behavior under the Maa\ss{}  raising and lowering operator.
\begin{proposition}\label{prop:diffwhittw}
 For $m>0$ and $y>0$ we have that
\begin{equation*}
L_k \mathcal{W}_{s,k}(4\pi m y)e(mx)= \frac{1}{4\pi m} \left(s-\frac{k}{2}\right)\left(1-s-\frac{k}{2}\right)\mathcal{W}_{s,k-2}(4\pi m y)e(mx)
\end{equation*}
and
\begin{equation*}
 R_k \mathcal{W}_{s,k}(4\pi m y)e(mx)= -4\pi m \mathcal{W}_{s,k+2}(4\pi m y)e(mx).
\end{equation*}
\end{proposition}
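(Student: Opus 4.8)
The plan is to compute both images directly, reducing to a differentiation formula for the classical Whittaker function $W_{\kappa,\mu}$. Writing $\tau=x+iy$, so that $L_k=-2iy^2\,\partial/\partial\bar\tau$ and $R_k=2i\,\partial/\partial\tau+ky^{-1}$ with $\partial/\partial\bar\tau=\tfrac12(\partial_x+i\partial_y)$ and $\partial/\partial\tau=\tfrac12(\partial_x-i\partial_y)$, a function of the shape $g(\tau)=\phi(y)\,e(mx)$ satisfies $\partial_x g=2\pi i m\,g$ and $\partial_y g=\phi'(y)e(mx)$, so that
\[
L_k g=y^2\bigl(2\pi m\,\phi(y)+\phi'(y)\bigr)e(mx),\qquad R_k g=\bigl(\phi'(y)-2\pi m\,\phi(y)+ky^{-1}\phi(y)\bigr)e(mx).
\]
First I would substitute $\phi(y)=\mathcal{W}_{s,k}(4\pi m y)=(4\pi m y)^{-k/2}W_{k/2,\,s-1/2}(4\pi m y)$ and change variables to $w=4\pi m y$. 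Expanding $\phi'$ by the product rule, each right-hand side becomes $w^{-k/2-1}e(mx)$ times a linear combination of $W_{k/2,\,s-1/2}(w)$ and $w\,W'_{k/2,\,s-1/2}(w)$ whose coefficients are polynomials in $w$ of degree at most one. This is the same pattern that underlies the proofs of Propositions \ref{pro:pcmaassrai} and \ref{pro:pcmaasslow} for the $M$-Whittaker function.

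Next I would eliminate the derivative $W'$ using the standard contiguous relations for Whittaker functions from Chapter 13 of \cite{Pocket}: for the lowering operator one invokes the relation of the form $w\,W'_{\kappa,\mu}(w)=\bigl(\kappa-\tfrac w2\bigr)W_{\kappa,\mu}(w)-\bigl[\mu^2-(\kappa-\tfrac12)^2\bigr]W_{\kappa-1,\mu}(w)$, and for the raising operator its companion $w\,W'_{\kappa,\mu}(w)=\bigl(\tfrac w2-\kappa\bigr)W_{\kappa,\mu}(w)-W_{\kappa+1,\mu}(w)$, in both cases with $\kappa=k/2$ and $\mu=s-\tfrac12$. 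Substituting these, the $W_{k/2,\,s-1/2}(w)$ contributions cancel against the polynomial terms, leaving a scalar multiple of $w^{-(k\mp2)/2}W_{(k\mp2)/2,\,s-1/2}(w)\,e(mx)=\mathcal{W}_{s,k\mp2}(4\pi m y)\,e(mx)$. It then remains only to collect the scalar, tracking the powers of $4\pi m$ coming from the chain rule and the factor $y^2=w^2/(4\pi m)^2$ present in the lowering case. In the lowering case the scalar contains $\mu^2-(\kappa-\tfrac12)^2$, and a one-line computation with $\mu=s-\tfrac12$, $\kappa=k/2$ gives $\mu^2-(\kappa-\tfrac12)^2=-(s-\tfrac k2)(1-s-\tfrac k2)$, which produces the constant $\tfrac1{4\pi m}(s-\tfrac k2)(1-s-\tfrac k2)$; in the raising case no such factor appears and the constant is simply $-4\pi m$.

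The computation is essentially routine, and the main obstacle is bookkeeping: one must select the correct members of the family of Whittaker recurrences (several look alike, differing only by signs and by whether they raise or lower $\kappa$), and one must carry the $(4\pi m y)^{-k/2}$ prefactor and the powers of $4\pi m$ through the substitution without error. If one prefers to bypass the recurrences, there is a more structural route: $\mathcal{W}_{s,k}(4\pi m y)\,e(mx)$ is, up to a scalar, the unique solution of $\Delta_k f=(s-\tfrac k2)(1-s-\tfrac k2)f$ that decays as $y\to\infty$, so by the commutation identities between $R_k$, $L_k$ and $\Delta_k$ the functions $R_k g$ and $L_k g$ are again decaying eigenfunctions of $\Delta_{k+2}$ and $\Delta_{k-2}$ for the shifted eigenvalue, hence scalar multiples of $\mathcal{W}_{s,k+2}(4\pi m y)\,e(mx)$ and $\mathcal{W}_{s,k-2}(4\pi m y)\,e(mx)$; the two scalars are then read off from the leading asymptotics as $y\to\infty$, using $W_{\kappa,\mu}(w)\sim e^{-w/2}w^{\kappa}$.
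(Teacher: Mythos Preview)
Your proposal is correct and follows essentially the same route as the paper: the paper's proof consists solely of citing the derivative/contiguous identities (13.1.33), (13.4.23), and (13.4.26) from \cite{Pocket}, which amount to the same Whittaker recurrences you wrote out explicitly (expressed there via $U$ rather than $W$). Your alternative ``structural'' argument via eigenfunction uniqueness and asymptotics is a nice bonus, but the direct computation you describe is already the paper's proof with the bookkeeping filled in.
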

\begin{proof}
 For the first equation we use (13.1.33) and (13.4.23) and for the second one (13.1.33) and (13.4.26) in \cite{Pocket}.
\end{proof}

\section{Theta Functions}\label{sec:theta}

\subsection{The Kudla-Millson theta function}\label{sec:thetakm}
We let 
\[
 \varphi^0_{\Delta,\text{KM}}(\lambda,z)
 =\left(\frac{1}{\abs{\Delta}}(\lambda,\lambda(z))^2-\frac{1}{2\pi} \right) e^{-2\pi R(\lambda,z)/|\Delta|}\Omega,
\]
where 
$R(\lambda,z):=\frac{1}{2}(\lambda,\lambda(z))^2-(\lambda,\lambda)$ 
and $\Omega = \frac{i}{2}\frac{dz \wedge d\bar{z}}{y^2}$. It is a Schwartz function constructed by Kudla and Millson \cite{KM86}. From now on, if $\Delta=1$ we omit the index $\Delta$ and simply write $\varphi^0_{\text{KM}}(\lambda,z)$ .

For  $\delta \in \dgdelta$ we define a theta function $\Theta_{\delta}(\tau,z)$ for 
$\tau, z \in \h$ as in \cite{BrFu06,AE}. Let 
 $\varphi_{\text{KM}}(\lambda,\tau,z)=e^{2\pi i Q_\Delta(\lambda)\tau} \varphi^0_{\Delta,\text{KM}}(\sqrt{v}\lambda,z)$ and
\begin{equation*}
	\Theta_{\delta}(\tau,z,\phikm)=\sum\limits_{\lambda\in \Delta L+\delta} \varphi_{\text{KM}} (\lambda,\tau, z).
\end{equation*}
The vector valued theta series
\[
\Theta_{\dgdelta}(\tau,z,\phikm)=\sum\limits_{\delta\in \dgdelta} \Theta_{\delta}(\tau,z,\phikm) \mathfrak{e}_{\delta}
\]
is then a $C^{\infty}$-automorphic form of weight $3/2$ which transforms with respect to the representation $\rho_\Delta$ \cite{BrFu06}.

We obtain a  $\C[\dg]$-valued twisted theta function by setting
\begin{equation*}
	\thetaL{\phikm} := \sum_{h \in \dg} \Theta_{\Delta,r,h}(\tau,z,\phikm) \e_h,
\end{equation*}
where the corresponding components for $h \in \dg$ are defined as
\begin{equation*}
\begin{split}
 \Theta_{\Delta,r,h}(\tau,z,\phikm) &= \left\langle \psi_{\Delta,r}(\e_h), \overline{\Theta_{\dgdelta}(\tau,z,\phikm)} \right\rangle
\\
& =
  \sum\limits_{\substack{\delta\in \dgdelta\\ \pi(\delta) = rh \\Q_\Delta(\delta)\equiv\sgn(\Delta)Q(h)\, (\Z)}} \chi_{\Delta}(\delta)\Theta_{\delta}(\tau,z,\phikm).
\end{split}
\end{equation*}

By Proposition \ref{prop:intertwiner} the theta function $\thetaL{\phikm}$ is a non-holomorphic $\C[\dg]$-valued modular form of weight
$3/2$ for the representation $\widetilde{\rho}$. 
Furthermore, it is a non-holomorphic automorphic form of weight 0 for $\G_0(N)$ in the variable $z \in D$.

Following \cite{BrFu06} we rewrite the Kudla-Millson kernel as a Poincar\'{e} series using the smaller lattice $K$.
We let $\epsilon =1$, when $\Delta>0$, and $\epsilon =i$, when $\Delta<0$. The following proposition can be found in \cite{Ehlen}.

\begin{proposition}\label{twistsmallK}
We have
\begin{align*}
 & \thetaL{\phikm}= -y\,\frac{N^{3/2}}{2\abs{\Delta}}\, \bar{\epsilon}\,\sum\limits_{n=1}^{\infty}n^2\sum_{\gamma\in \widetilde{\G}_\infty\setminus\widetilde{\G}}  \left(\frac{\Delta}{n}\right) 
\\
& \times\left[\exp\left(-\pi\frac{y^2Nn^2}{v\abs{\Delta}}\right) v^{-3/2} \sum_{\lambda \in  K'} e\left(\abs{\Delta}Q(\lambda)\bar{\tau}-2N\lambda n x \right)\mathfrak{e}_{r\lambda}\right]\Bigg|_{3/2,\widetilde{\rho}_K}\gamma \, dx dy .
\end{align*}
\end{proposition}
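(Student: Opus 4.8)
The plan is to follow the strategy of \cite[Section 4]{BrFu06}, unfolding the Kudla–Millson theta kernel against the smaller lattice $K$. First I would use the splitting $L = K + \Z\ell + \Z\ell'$ together with $K'/K \simeq L'/L$ to write each component $\Theta_\delta(\tau,z,\phikm)$ as a sum over $\Delta K + \delta_K$ of vectors $\lambda = \lambda_K + b\ell$ (with $\lambda_K \in \Delta K$, $b \in \Z$), where $\ell = \smallm{0}{1/N}{0}{0}$ is the primitive isotropic vector attached to the cusp $\infty$. Evaluating the Gaussian $\varphi^0_{\Delta,\text{KM}}(\sqrt{v}\lambda,z)$ on such a decomposition, the dependence on $z = x+iy$ enters through the majorant $(\lambda,\lambda)_z$; the key computation is that the $\ell$-direction contributes a one-dimensional Gaussian in $b$ whose argument is controlled by $y^2 N/(v|\Delta|)$, exactly producing the factor $\exp(-\pi y^2 N n^2/(v|\Delta|))$ after Poisson summation in $b$. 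This is the same manipulation as in \cite{BrFu06}, now carried through with the rescaled quadratic form $Q_\Delta = Q/|\Delta|$ and the twist by $\chi_\Delta$.

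Next I would apply Poisson summation in the variable $b \in \Z$ (equivalently, recognize the $b$-sum as a theta series for a rank-one lattice and invoke its modularity). After Poisson summation the sum is reindexed by a dual variable $n \in \Z$, and the polynomial prefactor $\tfrac{1}{|\Delta|}(\lambda,\lambda(z))^2 - \tfrac{1}{2\pi}$ from $\phikm$ gets transformed into the factor $n^2$ (up to the normalizing constants $-y N^{3/2}/(2|\Delta|)$ and $\bar\epsilon$), just as the weight-$3/2$ Kudla–Millson prefactor always does under this unfolding. The twist: the genus character $\chi_\Delta(\delta)$, restricted along the isotropic line, reduces to the Kronecker symbol $\left(\tfrac{\Delta}{n}\right)$ — this is the content of the formula \cite[Section I.2, Proposition 1]{GKZ} quoted earlier, and it is what turns $\psi_{\Delta,r}(\e_h)$ into the stated $K'$-indexed sum with $\mathfrak{e}_{r\lambda}$. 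The remaining $K'$-lattice sum $\sum_{\lambda\in K'} e(|\Delta|Q(\lambda)\bar\tau - 2N\lambda n x)\mathfrak{e}_{r\lambda}$ is then exactly what appears inside the bracket.

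Finally I would re-fold: the $\G_0(N)$-translates in the $z$-variable that were used to reduce to the cusp $\infty$ get converted, via the isomorphism $G \simeq \SL_2$ and $G$-equivariance of $D \simeq \h$, into the sum over $\gamma \in \widetilde\G_\infty\backslash\widetilde\G$ acting through $|_{3/2,\widetilde\rho_K}$ on the bracketed Schwartz function. One must check that the seed function transforms correctly under $\widetilde\G_\infty = \langle T\rangle$ so that the Poincaré series is well defined — this follows from the $T$-eigenvalue computation $\rho_\Delta(T)\e_\delta = e(Q_\Delta(\delta))\e_\delta$ matching the phase $e(|\Delta|Q(\lambda)\bar\tau)$ — and that the resulting representation is $\widetilde\rho_K$ rather than $\rho_K$, which is where the sign of $\Delta$ and the factor $\bar\epsilon$ enter (cf.\ Proposition \ref{prop:intertwiner}). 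The main obstacle I expect is bookkeeping: tracking the constants $-yN^{3/2}/(2|\Delta|)$, $\bar\epsilon$, and the power $v^{-3/2}$ through the Gaussian integral and the Poisson summation, and verifying that the genus-character-to-Kronecker-symbol reduction is uniform in the $K'$-variable so that it can be pulled out as $\left(\tfrac{\Delta}{n}\right)$; the structural steps are essentially identical to \cite{BrFu06} and \cite{Ehlen}, so I would cite those for the non-twisted skeleton and only spell out the twist.
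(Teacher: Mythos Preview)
The paper does not actually prove this proposition; it simply records that the formula ``can be found in \cite{Ehlen}''. Your outline follows the correct template---lattice splitting along $L = K + \Z\ell + \Z\ell'$, Poisson summation in an isotropic direction, and reduction of the genus character $\chi_\Delta$ along the isotropic line to the Kronecker symbol $\left(\tfrac{\Delta}{n}\right)$ via \cite{GKZ}---and this is indeed the route taken in \cite{BrFu06} and \cite{Ehlen}.

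There is, however, a genuine confusion in your final step. You write that ``the $\G_0(N)$-translates in the $z$-variable \dots\ get converted \dots\ into the sum over $\gamma \in \widetilde\G_\infty\backslash\widetilde\G$''. This is not how the Poincar\'e series in $\tau$ arises: nothing in your earlier steps involved any unfolding in $z$, and the $\G_0(N)$-action on $z$ and the $\widetilde\G$-action on $\tau$ are independent and cannot be exchanged. The Poincar\'e structure instead comes directly from the lattice decomposition. Writing $\lambda = \lambda_K + c\ell' + d\ell$ (note that you also omitted the $\ell'$-direction, so your decomposition $\lambda = \lambda_K + b\ell$ does not cover $L'$), one Poisson-sums in one of the two isotropic coordinates; the remaining double sum is then over pairs of integers. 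Extracting the gcd $n$ from each pair leaves a sum over primitive vectors in $\Z^2$, i.e.\ over bottom rows of matrices in $\SL_2(\Z)$, which is exactly a sum over $\widetilde\G_\infty\backslash\widetilde\G$. The action of a coset representative $\gamma$ on the seed is then identified with $|_{3/2,\widetilde\rho_K}\gamma$ via the transformation law of the rank-one theta series attached to $K$. Throughout this computation the variable $z$ is passive.
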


\subsection{The Siegel theta function}\label{sec:thetasiegel}

We define the Siegel theta function by
\begin{equation*}
\thetaL{\phis}:=v\sum\limits_{\lambda\in L'} e^{-2\pi v R(\lambda,z)/\abs{\Delta}}e(Q_{\Delta}(\lambda)\tau)\mathfrak{e}_\lambda.
\end{equation*}
Here $
 \varphi_{\text{S}}(\lambda,z)=e^{-\pi (\lambda,\lambda)_{\Delta,z}}
$
is the Gaussian on $V(\R)$ associated to the majorant $(\cdot,\cdot)_z$. 
The Siegel theta function is a $\G_0(N)$-invariant function in $z$ and a non-holomorphic modular form of weight $-1/2$ for $\widetilde{\G}$ with representation $\widetilde{\rho}$.

By $\partial, \bar{\partial}$ we denote the usual differentials on $\mathbb{D}$. We set $d^{\text{c}}=\frac{1}{4\pi i}(\partial-\bar{\partial})$, so that $dd^{\text{c}}=-\frac{1}{2\pi i}\partial\bar{\partial}$. The Kudla-Millson theta function and the Siegel theta function are related by the identity \cite[Theorem 4.4]{BrFu04} 
\begin{equation}\label{eq:DeltaTheta}
L_{3/2,\tau}\thetaL{\phikm}=-dd^{\text{c}}\thetaL{\phis}=\frac{1}{4\pi}\Delta_{0,z}\thetaL{\phis}\cdot\Omega.
\end{equation}
For the Kudla-Millson theta kernel we have \cite[Equation (2.17)]{BrOno2}
\begin{equation}\label{eq:deltatheta}
\Delta_{3/2,\tau} \thetaL{\phikm}= \frac14 \Delta_{0,z}\thetaL{\phikm}.
\end{equation}


\section{The Theta Lift}\label{sec:thetalift}

Let $f$ be a harmonic weak Maa\ss{}  form in $H^{+}_{-2k}(N)$. Following Bruinier and Ono \cite{BrOno2} we define a theta lift as follows. For even $k$ we let
\[
 \LE=R_{3/2,\tau}^{k/2} \int_M (R^{k}_{-2k,z}f)(z)\thetaL{\phikm}
\]
and for $k$ odd
\[
 \LO=L_{3/2,\tau}^{(k+1)/2} \int_M (R^{k}_{-2k,z}f)(z)\thetaL{\phikm}.
\]
We drop the index $\Delta,r$ if we consider the untwisted lift, and the superscript ``e'' respectively ``o'' in the case that the same proof works for even and odd $k$.
We use a subscript $h$ to denote the components of the lift.

Note that the transformation properties of the twisted Kudla-Millson theta function directly imply that the lift transforms with representation $\widetilde{\rho}$.

Recall that $O(L'/L)$ can be identified with the group generated by the Atkin-Lehner involutions. It acts on harmonic Maa\ss{}  forms by the slash operator and on $\C[L'/L]$-valued modular forms with respect to the Weil representation $\widetilde{\rho}$ through the natural action on $\C[L'/L]$. By a straightforward computation we see that the theta lift is equivariant with respect to the action of $O(L'/L)$ (compare \cite[Proposition 3.1]{BrOno2}).
\begin{proposition}\label{prop:actionO}
 For $\gamma\in O(L'/L)$ and $h\in L'/L$, we have
\[
 \Lambda_{\Delta,r,\gamma h}(\tau,f)=\Lambda_{\Delta,r,h}(\tau, \left. f\right|_{-2k}\gamma^{-1}).
\]
\end{proposition}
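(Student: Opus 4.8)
The plan is to unwind the definition of the twisted theta lift and track how the Atkin--Lehner action moves through each ingredient. The key point is that the twisted Kudla--Millson theta function $\thetaL{\phikm}$, as a $\C[\dg]$-valued object, carries a natural action of $O(L'/L)$ on the index $h \in \dg$, and this action is compatible with the action on the space variable $z$ via the slash operator. Concretely, for $\gamma \in O(L'/L)$ (identified with an Atkin--Lehner involution, which we also denote $\gamma$ acting on $\h$) one has an identity of the shape
\[
\Theta_{\Delta,r,\gamma h}(\tau, z, \phikm) = \Theta_{\Delta,r,h}(\tau, \gamma^{-1} z, \phikm),
\]
coming from the fact that $\gamma$ permutes the cosets of $\Delta L$ inside $L'$ compatibly with the genus character $\chi_\Delta$ (which by the formula in Section~\ref{sec:twistvec} is invariant under all Atkin--Lehner involutions) and with the quadratic form. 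This is the same straightforward computation referenced in \cite[Proposition 3.1]{BrOno2}, now carried out in the twisted setting; I would state it as the first step and verify it by comparing the defining sums over $\lambda \in \Delta L + \delta$ on both sides, using that $\gamma$ acts on $V$ preserving $Q$ and permuting $\dgdelta$ so that $\pi(\gamma\delta) = r(\gamma h)$ whenever $\pi(\delta) = rh$.

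Next I would substitute this into the definition of the lift. For the component at $\gamma h$ we get
\[
\Lambda^{\bullet}_{\Delta,r,\gamma h}(\tau, f) = (\text{raising/lowering in }\tau)\int_M (R^k_{-2k,z} f)(z)\, \Theta_{\Delta,r,h}(\tau, \gamma^{-1} z, \phikm),
\]
where $\bullet$ is ``e'' or ``o''. Now I perform the change of variables $z \mapsto \gamma z$ in the integral over $M = \G_0(N)\backslash D$; since $\gamma$ normalizes $\G_0(N)$ it induces an automorphism of $M$, and the invariant measure $\Omega$ (hidden inside $\phikm$) is preserved. The integrand becomes $(R^k_{-2k,z} f)(\gamma z)\, \Theta_{\Delta,r,h}(\tau, z, \phikm)$. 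Since the Maa{\ss} raising operator commutes with the weight $-2k$ slash operator, $(R^k_{-2k,z}f)(\gamma z) = R^k_{-2k,z}\big(f|_{-2k}\gamma\big)(z) \cdot (c_\gamma z + d_\gamma)^{\text{(automorphy factor)}}$; more precisely one writes $(R^k_{-2k} f)|_{-2k+2k} \gamma = R^k_{-2k}(f|_{-2k}\gamma)$ using that $R$ intertwines consecutive weights and the iterated version $R^k_{-2k}$ lands in weight $0$, so the automorphy factors telescope cleanly and $(R^k_{-2k}f)(\gamma z) = (R^k_{-2k}(f|_{-2k}\gamma))(z)$ as a weight-$0$ (hence genuinely invariant) quantity. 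Pulling $\tau$-operators out front, this is exactly $\Lambda^{\bullet}_{\Delta,r,h}(\tau, f|_{-2k}\gamma^{-1})$ after replacing $\gamma$ by $\gamma^{-1}$.

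The main obstacle I anticipate is bookkeeping the automorphy factors through the iterated raising operator $R^k_{-2k,z}$: one must check that the intertwining property $R_\ell(g|_\ell \gamma) = (R_\ell g)|_{\ell+2}\gamma$ composes correctly so that $R^k_{-2k}$ sends weight $-2k$ forms to weight $0$ forms in a way that is literally $\G_0(N)$-invariant, and then that the Atkin--Lehner involution (which is in $\GL_2^+(\Q)$, not $\SL_2(\Z)$) still commutes with $R$ up to the expected scalar — the determinant normalization of the slash operator matters here. A secondary technical point is justifying the change of variables under the integral sign and the permissibility of commuting the $\tau$-side operators $R^{k/2}_{3/2,\tau}$ (resp.\ $L^{(k+1)/2}_{3/2,\tau}$) past the integral, which follows from the rapid decay of $\phikm$ in $\lambda$ and is the same argument used to define $\Lambda^{\bullet}$ in the first place. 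Once these are in place the identity is immediate; I would present it as a short computation citing \cite[Proposition 3.1]{BrOno2} for the untwisted model and Section~\ref{sec:twistvec} for the Atkin--Lehner invariance of $\chi_\Delta$.
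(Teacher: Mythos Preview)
Your proposal is correct and follows essentially the same approach as the paper, which does not give a detailed argument but simply remarks that the equivariance follows from a straightforward computation, referring to \cite[Proposition 3.1]{BrOno2}. Your outline makes explicit precisely the ingredients implicit in that remark: the Atkin--Lehner invariance of $\chi_\Delta$ from Section~\ref{sec:twistvec}, the equivariance of the (twisted) Kudla--Millson kernel in the space variable, the change of variables $z\mapsto\gamma z$ on $M$, and the commutation of the iterated raising operator with the weight $-2k$ slash action so that $R^k_{-2k}f$ transforms with weight $0$.
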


\begin{proposition}\label{prop:efodd}
 Let $f$ be an eigenform of $\Delta_{-2k,z}$ with eigenvalue $\lambda$. Then $\LO$ is an eigenform of $\Delta_{1/2-k,\tau}$ with eigenvalue $\frac{\lambda}{4}$, and $\LE$ is an eigenform of $\Delta_{3/2+k,\tau}$ with eigenvalue $\frac{\lambda}{4}$.
\end{proposition}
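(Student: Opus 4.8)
The plan is to reduce the statement to the commutation relations between the iterated Maaß raising/lowering operators and the weighted Laplacians collected in Lemma \ref{lm:reldiff}, together with the relation \eqref{eq:deltatheta} which transfers the $\tau$-Laplacian on the Kudla-Millson kernel to the $z$-Laplacian. First I would write $G(\tau) := \int_M (R^k_{-2k,z}f)(z)\,\thetaL{\phikm}$, so that $\LE = R_{3/2,\tau}^{k/2} G$ and $\LO = L_{3/2,\tau}^{(k+1)/2} G$. The key preliminary step is to compute $\Delta_{3/2,\tau} G$. Since $f$ is an eigenform of $\Delta_{-2k,z}$ with eigenvalue $\lambda$, Lemma \ref{lm:reldiff} (first line) gives $\Delta_{0,z}(R^k_{-2k,z}f) = R^k_{-2k,z}\bigl(\Delta_{-2k,z} - k(k+1)\bigr)f = (\lambda - k(k+1))\,R^k_{-2k,z}f$. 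Applying \eqref{eq:deltatheta} under the integral sign and moving $\Delta_{0,z}$ onto $f$ by the $\G_0(N)$-invariance of the theta kernel (integration by parts on the locally symmetric space $M$, with the usual care at the cusps — here one uses the rapid decay of the Kudla-Millson Schwartz function, or cites the analogous argument in \cite{BrFu06,BrOno2}), I get
\[
\Delta_{3/2,\tau} G(\tau) = \tfrac14 \int_M (R^k_{-2k,z}f)(z)\,\Delta_{0,z}\thetaL{\phikm} = \tfrac14 \int_M \bigl(\Delta_{0,z} R^k_{-2k,z}f\bigr)(z)\,\thetaL{\phikm} = \tfrac{\lambda - k(k+1)}{4}\,G(\tau).
\]
So $G$ is an eigenform of $\Delta_{3/2,\tau}$ with eigenvalue $\frac14(\lambda - k(k+1)) = \frac{\lambda}{4} - \frac{k(k+1)}{4}$.

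Next I would propagate this through the raising/lowering operators in $\tau$. For even $k$, apply the second line of Lemma \ref{lm:reldiff}: $\Delta_{3/2+k,\tau} R_{3/2,\tau}^{k/2} = R_{3/2,\tau}^{k/2}\bigl(\Delta_{3/2,\tau} + \frac{k}{4}(k+1)\bigr)$, hence
\[
\Delta_{3/2+k,\tau}\LE = R_{3/2,\tau}^{k/2}\Bigl(\tfrac{\lambda}{4} - \tfrac{k(k+1)}{4} + \tfrac{k(k+1)}{4}\Bigr) G = \tfrac{\lambda}{4}\LE,
\]
and the shift terms cancel exactly. For odd $k$, apply the third line: $\Delta_{1/2-k,\tau} L_{3/2,\tau}^{(k+1)/2} = L_{3/2,\tau}^{(k+1)/2}\bigl(\Delta_{3/2,\tau} + \frac{k}{4}(k+1)\bigr)$, giving $\Delta_{1/2-k,\tau}\LO = \frac{\lambda}{4}\LO$ by the same cancellation. (Strictly, one should double-check that the eigenvalue shift in Lemma \ref{lm:reldiff} is stated for the weights $3/2+k$ and $1/2-k$ that actually occur as the top of the iterated operators — i.e. that $R_{3/2}^{k/2}$ ends at weight $3/2+k$ and $L_{3/2}^{(k+1)/2}$ ends at weight $3/2 - (k+1) = 1/2 - k$ — which is exactly how the lemma is phrased.)

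The main obstacle is the interchange of $\Delta_{0,z}$ with the integral over $M$ and the self-adjointness step that moves it from the theta kernel onto $R^k_{-2k,z}f$: $M$ is non-compact, and both $R^k_{-2k,z}f$ and the theta kernel grow at the cusps, so the integration-by-parts / Stokes argument requires controlling boundary terms. This is the same technical point handled in \cite[Section 4]{BrFu06} and \cite{BrOno2}, and I would invoke their truncation-of-$M$ argument rather than reproving it: the Kudla-Millson Schwartz form decays rapidly in $\lambda$, the growth of $R^k_{-2k,z}f$ at each cusp is at worst polynomial times an exponential matched by the theta kernel, and the boundary integrals over the truncation horocycles vanish in the limit. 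Everything else is the bookkeeping of eigenvalue shifts, which cancels by design. I would end by remarking that the same computation shows the untwisted lift $\Lambda^{\mathrm{e}}$, $\Lambda^{\mathrm{o}}$ satisfies the identical statement, since neither \eqref{eq:deltatheta} nor Lemma \ref{lm:reldiff} sees the genus character.
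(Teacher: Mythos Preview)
Your proof is correct and follows essentially the same route as the paper: commute the weighted Laplacian past the iterated raising/lowering operators via Lemma~\ref{lm:reldiff}, use \eqref{eq:deltatheta} to convert $\Delta_{3/2,\tau}$ on the theta kernel into $\tfrac14\Delta_{0,z}$, move $\Delta_{0,z}$ onto $R^k_{-2k,z}f$ by the rapid decay of the Kudla--Millson kernel (the paper cites \cite[Proposition~4.1]{Funke} for this), and observe that the $k(k+1)/4$ shifts cancel. The only cosmetic difference is that you first compute the $\Delta_{3/2,\tau}$-eigenvalue of the integral $G$ and then propagate through $R^{k/2}$ or $L^{(k+1)/2}$, whereas the paper commutes $\Delta_{1/2-k,\tau}$ (resp.\ $\Delta_{3/2+k,\tau}$) past the outer operator first and then works on the integral; the content is identical.
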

\begin{proof}
 Using Lemma \ref{lm:reldiff} we see that $\Delta_{1/2-k,\tau}\LO$ equals
 \begin{align*}
 L_{3/2,\tau}^{\frac{k+1}{2}} \int_M (R^{k}_{-2k,z}f)(z)\Delta_{3/2,\tau}\thetaL{\phikm}+\frac{k}{4}(k+1) \LO.
 \end{align*}
Via the relation between the two theta kernels \eqref{eq:DeltaTheta} we obtain
 \begin{align}\label{eq:ev1}
 \frac14 L_{3/2,\tau}^{\frac{k+1}{2}} \int_M (R^{k}_{-2k,z}f)(z)\Delta_{0,z}\thetaL{\phikm}+\frac{k}{4}(k+1)  \LO.
 \end{align}
By the rapid decay of the Kudla-Millson theta function \cite[Proposition 4.1]{Funke} we may move the Laplacian. Using  Lemma \ref{lm:reldiff} we then obtain that (\ref{eq:ev1}) equals
\begin{align*}
  \frac14 L_{3/2,\tau}^{\frac{k+1}{2}} \int_M (R_{-2k,z}^{k}\Delta_{-2k,z}f)(z)\thetaL{\phikm}.
\end{align*}
Since $f$ is an eigenform with eigenvalue $\lambda$ this equals $\frac{\lambda}{4}\LO$.
For even $k$ we argue analogously.

\end{proof}


We compute the lift of the Poincar\'{e} series $ F_m(z,s,-2k)$.
\begin{theorem}\label{thm:liftpoincare}
For $k\geq 0$ we have
\begin{align*}
&\Lambda^{\text{e}}_{\Delta,r}(\tau, F_m(z,s,-2k))
\\
&\quad\quad\quad=C^{\text{e}}\cdot\sum\limits_{n\mid m}\left(\frac{\Delta}{n}\right) n^{-(k+1)} \F_{\frac{m^2}{4Nn^2}\abs{\Delta},-\frac{m}{n}r}\left(\tau,\frac{s}{2}+\frac14,\frac32+k\right),
\end{align*}
where
\begin{align*}
C^{\text{e}}& =-\frac{2^{2s+2k-1} m^{2k+1} \pi^{(3k-1)/2}\abs{\Delta}^{(k+1)/2} \bar{\epsilon}}{N^{k/2}} \, \frac{\G\left(\frac{s}{2}+1\right)}{\G(2s)}
\\
&\quad\quad\quad\quad\quad\quad\quad\quad\times \prod_{j=0}^{k-1}(s+j-k)\prod_{j=0}^{k/2-1}\left(\frac{s}{2}+1+j\right)
\end{align*}
and
\begin{align*}
\Lambda^{\text{o}}_{\Delta,r}(\tau, F_m(z,s,-2k))= C^{\text{o}}\cdot
\sum\limits_{n\mid m}\left(\frac{\Delta}{n}\right) n^{k} \F_{\frac{m^2}{4Nn^2}\abs{\Delta},-\frac{m}{n}r}\left(\tau,\frac{s}{2}+\frac14,\frac12-k\right),
\end{align*}
where 
\begin{align*}
C^{\text{o}}=-\frac{2^{2k-s}\abs{\Delta}^{-k/2}\bar{\epsilon}}{\G\left(\frac{s}{2}+\frac12\right)} N^{(k+1)/2}\pi^{k/2}s \prod_{j=0}^{k-1}(s+j-k)\prod_{j=0}^{(k-1)/2}\left(\frac{s}{2}-\frac12-j\right).
\end{align*}
\end{theorem}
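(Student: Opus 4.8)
The plan is to compute the theta lift of the Poincaré series $F_m(z,s,-2k)$ by unfolding against the Kudla--Millson kernel, which has been rewritten in Proposition \ref{twistsmallK} as a Poincaré series over $\widetilde{\G}_\infty\backslash\widetilde{\G}$ with respect to the smaller lattice $K$. First I would apply the iterated raising operator $R^k_{-2k,z}$ to $F_m(z,s,-2k)$: by Proposition \ref{pro:pcmaassrai}, each application multiplies by a factor of the shape $4\pi m(s+\tfrac{k}{2}+j)$ and shifts the weight by $2$, so $R^k_{-2k,z}F_m(z,s,-2k)$ is again a Poincaré series $F_m(z,s,0)$ up to the explicit constant $\prod_{j=0}^{k-1}(s+j-k)$ (after reindexing) times powers of $4\pi m$. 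With both the input and the kernel realized as Poincaré series, the standard unfolding argument collapses the integral over $M=\G_0(N)\backslash D$ against one of the two sums: one unfolds the $z$-integral to an integral over $\G_\infty\backslash\h$ of $\M_{s,0}(4\pi my)e(-mx)$ against the single $K'$-term, and then evaluates the resulting $x$- and $y$-integrals.

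The $x$-integral over $\R/\Z$ picks out, from the inner sum over $\lambda\in K'$ in Proposition \ref{twistsmallK}, exactly those $\lambda$ with $2N\lambda n = -m$, i.e. $\lambda = -\tfrac{m}{2Nn}$ with $n\mid m$, which is the source of the divisor sum $\sum_{n\mid m}$ and the Legendre symbol $\kronecker{\Delta}{n}$; this also fixes the component index to $-\tfrac{m}{n}r$ and the value $Q(\lambda)$, which becomes the index $\tfrac{m^2}{4Nn^2}\abs{\Delta}$ of the output Poincaré series. The remaining $y$-integral is $\int_0^\infty \M_{s,0}(4\pi my)\, e^{-\pi y^2 Nn^2/(v\abs{\Delta})}\, e^{2\pi i \abs{\Delta}Q(\lambda)\bar\tau}\cdot y^{-3/2}\,\tfrac{dy}{y^2}$ (with appropriate powers of $v,y$ collected); this is a classical Laplace-type transform of a Whittaker function against a Gaussian, evaluated for instance via the integral representation of $\M_{s,k}$ or via known formulas (e.g. in \cite{BrHabil,BrOno2}), and it produces the $\M_{s/2+1/4,\,3/2+k}$ (resp. $\M_{s/2+1/4,\,1/2-k}$) Whittaker function in the variable $\tau$, i.e. the series $\F_{\cdots}(\tau,\tfrac{s}{2}+\tfrac14,\cdots)$, together with $\Gamma$-factors $\Gamma(s/2+1)/\Gamma(2s)$ (resp. $\Gamma(s/2+1/2)^{-1}$). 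For even $k$ one still has to apply the outer operator $R_{3/2,\tau}^{k/2}$; by Proposition \ref{pro:pcmaassrai} applied in the $\tau$-variable at $s\mapsto \tfrac{s}{2}+\tfrac14$ this contributes the extra product $\prod_{j=0}^{k/2-1}(\tfrac{s}{2}+1+j)$ and raises the weight from $3/2$ to $3/2+k$. For odd $k$ one applies $L_{3/2,\tau}^{(k+1)/2}$ instead, and Proposition \ref{pro:pcmaasslow} (together with the behaviour of $\M_{s,k}$ under lowering, cf. the proof of Proposition \ref{pro:pcmaasslow}) gives the product $\prod_{j=0}^{(k-1)/2}(\tfrac{s}{2}-\tfrac12-j)$ and lowers the weight to $1/2-k$. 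Collecting all constants — the prefactor $-y N^{3/2}\bar\epsilon/(2\abs{\Delta})$ from Proposition \ref{twistsmallK}, the powers of $4\pi m$ from the raising operators, the $n^2$ from the kernel sum combined with $\lambda=-m/(2Nn)$ (which is where the $n^{-(k+1)}$, resp. $n^k$, comes from), the $\Gamma$-factors and the $2$-powers from the $y$-integral — yields $C^{\text{e}}$ and $C^{\text{o}}$ as stated.

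The main obstacle will be the careful bookkeeping: tracking all powers of $N$, $m$, $\abs{\Delta}$, $\pi$, and $2$ through the raising/lowering operators, the rescaling of the lattice by $\Delta$, the identification $Q_\Delta=Q/\abs{\Delta}$, and especially the $y$-integral evaluating the Whittaker--Gaussian convolution and matching the Whittaker parameter $s/2+1/4$; an off-by-a-constant error anywhere propagates into $C^{\text{e}},C^{\text{o}}$. A secondary point requiring care is the justification of the unfolding and of moving the raising operator inside the integral, which is legitimate because the Kudla--Millson kernel decays rapidly (cf. \cite[Proposition 4.1]{Funke}) and the Poincaré series converges absolutely for $\Re(s)>1$; one should also note that the identity is first established for $\Re(s)$ large and then extended by meromorphic continuation, so that the specialization $s_0=1+k$ (giving the harmonic representative) used later in the paper is covered.
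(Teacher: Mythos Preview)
Your plan matches the paper's proof essentially step for step: raise $F_m(z,s,-2k)$ to weight $0$ via Proposition~\ref{pro:pcmaassrai}, unfold against the kernel in the form of Proposition~\ref{twistsmallK}, let the $x$-integral select $n\mid m$ and $\lambda=-m/(2Nn)$, evaluate the $y$-integral as a Laplace transform of a Bessel function (the paper uses $\M_{s,0}(4\pi my)=2^{2s-1}\Gamma(s+\tfrac12)\sqrt{4\pi my}\,I_{s-1/2}(2\pi my)$ and \cite{tables}), and then apply $R_{3/2,\tau}^{k/2}$ resp.\ $L_{3/2,\tau}^{(k+1)/2}$ in $\tau$. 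One small slip to fix: the $y$-integral itself lands at weight $3/2$, i.e.\ it produces $\M_{s/2+1/4,\,3/2}$ (not $\M_{s/2+1/4,\,3/2+k}$ or $\M_{s/2+1/4,\,1/2-k}$ directly); the shift to $3/2+k$ or $1/2-k$ happens only after the outer $\tau$-operators, as you in fact say in your next sentence.
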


\begin{proof}
For the explicit evaluation of the lift of Poincar\'{e} series we generalize the proof of Bruinier and Ono \cite{BrOno2}.
Repeatedly applying Proposition \ref{pro:pcmaassrai} implies (by induction)
\begin{align}
 & \label{proof:pc11} \int_M (R^{k}_{-2k,z}F_m(z,s,-2k))\thetaL{\phikm}
\\
&\notag=
 (4\pi m)^k\prod_{j=0}^{k-1}(s+j-k) \int_M F_m(z,s,0)\thetaL{\phikm}.
\end{align}
Using the definition of the Poincar\'{e} series (\ref{def:poincare}) and an unfolding argument we obtain
\begin{align*}
 \frac{1}{\G(2s)} (4\pi m)^k\prod_{j=0}^{k-1}(s+j-k) \int_{\G_\infty\setminus\h}\M_{s,0}(4\pi m y)e(-mx)\thetaL{\phikm}.
\end{align*}
By Lemma \ref{twistsmallK} this equals
\begin{align*}
 -\frac{N^{3/2}\bar{\epsilon}}{2\abs{\Delta}} \frac{(4\pi m)^k}{\G(2s)} \prod_{j=0}^{k-1}(s+j-k)  \sum\limits_{n=1}^\infty \left(\frac{\Delta}{n}\right)n^2\sum\limits_{\gamma\in\widetilde{\G}_\infty\setminus\widetilde{\G}} I(\tau,s,m,n)|_{3/2,\widetilde{\rho}_K} \ \gamma,
\end{align*}
where
\begin{align*}
  I(\tau,s,m,n)&= \int_{y=0}^\infty\int_{x=0}^1 y \M_{s,0}(4\pi m y)e(-mx) \exp\left(-\frac{\pi n^2Ny^2}{\abs{\Delta}v}\right)
  \\
  & \quad\quad\quad\quad\quad\times v^{-3/2}\sum_ {\lambda\in K'}e\left(\abs{\Delta}Q(\lambda)\bar{\tau}-2N\lambda n x\right)\mathfrak{e}_{r\lambda}dxdy.
\end{align*}
Identifying $K'=\Z\left(\begin{smallmatrix} 1/2N&0\\0&-1/2N\end{smallmatrix}\right)$ we find that 
\[
 \sum_ {\lambda\in K'}e\left(\abs{\Delta}Q(\lambda)\bar{\tau}-2N\lambda n x\right)\mathfrak{e}_{r\lambda}=\sum\limits_{b\in\Z}e\left(-\abs{\Delta}\frac{b^2}{4N}\bar{\tau}-nbx\right)\mathfrak{e}_{rb}.
\]
Inserting this in the formula for $I(\tau, s,m,n)$, and integrating over $x$, we see that $I(\tau,s,m,n)$ vanishes whenever $n\nmid m$ and the only summand occurs for $b=-m/n$, when $n\mid m$. Thus, $ I(\tau,s,m,n)$ equals
\begin{align}\label{proof:pc222}
v^{-3/2} e\left(-\abs{\Delta}\frac{m^2}{4Nn^2}\bar{\tau}\right)\ \cdot\ \int_{y=0}^\infty y\M_{s,0}(4\pi m y) \exp\left(-\frac{\pi n^2Ny^2}{\abs{\Delta}v}\right)dy  \,\, \mathfrak{e}_{-rm/n}.
 \end{align}
To evaluate the integral in \eqref{proof:pc222} note that (see for example (13.6.3) in \cite{Pocket})
\[
\M_{s,0}(4\pi m y)=2^{2s-1}\G\left(s+\frac12\right)\sqrt{4\pi m y}\cdot I_{s-1/2}(2\pi m y).
\]
Substituting $t=y^2$ yields
\begin{align*}
& \int_{y=0}^\infty y\M_{s,0}(4\pi m y) \exp\left(-\frac{\pi n^2Ny^2}{\abs{\Delta}v}\right)dy
\\
&\notag = 2^{2s-1}\G\left(s+\frac12\right) \int_{y=0}^\infty y \sqrt{4\pi m y} \ I_{s-1/2}(2\pi m y)  \exp\left(-\frac{\pi n^2Ny^2}{\abs{\Delta}v}\right)dy
\\
&\notag = 2^{2s-1}\G\left(s+\frac12\right)  \sqrt{m\pi} \int_{t=0}^\infty t^{1/4}  I_{s-1/2}(2\pi m t^{1/2})  \exp\left(-\frac{\pi n^2Nt}{\abs{\Delta}v}\right)dt.
\end{align*}
The last integral is a Laplace transform and is computed in \cite{tables} (see (20) on p. 197). It equals
\[
\frac{\G\left(\frac{s}{2}+1\right)}{\G\left(s+\frac12\right)}(\pi m)^{-1} \left(\frac{\pi n^2N}{\abs{\Delta}v}\right)^{-3/4} \exp\left(\frac{\pi m^2\abs{\Delta}v}{2n^2N}\right)  M_{-\frac{3}{4}, \frac{s}{2}-\frac{1}{4}}\left(\frac{\pi m^2\abs{\Delta}v}{n^2N}\right).
\]
Inserting this we obtain that 
\begin{align*}
& I(\tau,s,m,n)= 2^{2s-1} \G\left(\frac{s}{2}+1\right) (\pi m)^{-2} \left(\frac{\pi m^2\abs{\Delta}}{n^2N}\right)^{3/2} 
\\
&\quad\quad\quad\quad\quad\quad\quad\times\M_{s/2+1/4,3/2}\left(\frac{\pi m^2\abs{\Delta}v}{n^2N}\right)e\left(-\frac{m^2\abs{\Delta}u}{4n^2N}\right)\mathfrak{e}_{-rm/n}.
\end{align*}
Therefore, we have that \eqref{proof:pc11} equals
\begin{equation}\label{proof:pc2222}
 -\frac{N^{3/2}}{2\abs{\Delta}}\bar{\epsilon}\ \frac{1}{\G(2s)} (4\pi m)^k\prod_{j=0}^{k-1}(s+j-k)  \sum\limits_{n=1}^\infty\left(\frac{\Delta}{n}\right) n^2\sum\limits_{\gamma\in\widetilde{\G}_\infty\setminus\widetilde{\G}} I(\tau,s,m,n)|_{3/2,\widetilde{\rho}_K}\gamma.
\end{equation}
For $k=0$ and even $k$ equation \eqref{proof:pc2222} can be rewritten as
\begin{align*}
& C\cdot \frac{1}{2} \sum\limits_{n=1}^\infty \left(\frac{\Delta}{n}\right)n^{-1}
 \\
&\quad\quad\times\sum\limits_{\gamma\in\widetilde{\G}_\infty\setminus\widetilde{\G}} 
 \left.\left[ \M_{\frac{s}{2}+\frac{1}{4},\frac{3}{2}}\left(\frac{\pi m^2\abs{\Delta}v}{n^2N}\right)e\left(-\frac{ m^2\abs{\Delta}u}{4n^2N}\right)\mathfrak{e}_{-rm/n}\right] \right|_{3/2,\widetilde{\rho}_K} \ \gamma
\\
&=C\cdot\sum\limits_{n=1}^\infty \left(\frac{\Delta}{n}\right) n^{-1} \mathcal{F}_{\frac{m^2}{4Nn^2}\abs{\Delta},-\frac{m}{n}r}\left(\tau,\frac{s}{2}+\frac14,\frac32\right),
\end{align*}
where
\[
 C=- 2^{2s+2k-1}\, m^{k+1}\, \pi^{k-1/2} \sqrt{\abs{\Delta}}\, \bar{\epsilon}\, \frac{\G\left(\frac{s}{2}+1\right)}{\G(2s)}\prod_{j=0}^{k-1}(s+j-k).
\]
If $k\neq0$ we consider $R_{3/2,\tau}^{k/2}$ of this expression. By the commutativity of the raising and the slash operator Proposition \ref{pro:pcmaassrai} implies
\begin{align*}
&R_{3/2,\tau}^{k/2}\mathcal{F}_{\frac{m^2}{4Nn^2}\abs{\Delta},-\frac{m}{n}r}\left(\tau,\frac{s}{2}+\frac14,\frac32\right)=\left(4\pi \frac{m^2\abs{\Delta}}{4Nn^2}\right)^{k/2}
\\
&\quad\quad\quad\times\prod\limits_{j=0}^{k/2-1}\left(\frac{s}{2}+\frac14+\frac{3/2+2j}{2}\right)\mathcal{F}_{\frac{m^2}{4Nn^2}\abs{\Delta},-\frac{m}{n}r}\left(\tau,\frac{s}{2}+\frac14,\frac32+k\right).
\end{align*}
We collect terms to obtain $C^{\text{e}}$ as in the statement of the theorem.

For odd $k$ we rewrite \eqref{proof:pc2222} as follows
\begin{align*}
& C'\cdot \frac{1}{2\G\left(s+\frac12\right)} \sum\limits_{n=1}^\infty \left(\frac{\Delta}{n}\right)n^{-1}
 \\
&\quad\quad\times\sum\limits_{\gamma\in\widetilde{\G}_\infty\setminus\widetilde{\G}} 
 \left.\left[ \M_{\frac{s}{2}+\frac{1}{4},\frac{3}{2}}\left(\frac{\pi m^2\abs{\Delta}v}{n^2N}\right)e\left(-\frac{ m^2\abs{\Delta}u}{4n^2N}\right)\mathfrak{e}_{-rm/n}\right] \right|_{3/2,\widetilde{\rho}_K} \ \gamma
\\
&=C'\cdot\sum\limits_{n=1}^\infty \left(\frac{\Delta}{n}\right) n^{-1} \mathcal{F}_{\frac{m^2}{4Nn^2}\abs{\Delta},-\frac{m}{n}r}\left(\tau,\frac{s}{2}+\frac14,\frac32\right),
\end{align*}
where 
\[
C':=-\frac{2^{2k-s}\sqrt{\abs{\Delta}} \,\bar{\epsilon}}{\G\left(\frac{s}{2}+\frac12\right)}\,m^{k+1}\,\pi^{k+1/2} s\,\prod_{j=0}^{k-1}(s+j-k).
\]
A repeated application of the lowering operator (Lemma \ref{pro:pcmaasslow}) yields the statement in the Theorem. 

\end{proof}

\begin{corollary}\label{cor:lifttrans}
Let $N$ be square-free and $k>0$.
If $f\in H^{+}_{-2k}(N)$ is a harmonic Maa\ss{}  form of weight $-2k$ for $\G_0(N)$, then $\LE$ belongs to $M^{\text{!}}_{3/2+k,\widetilde{\rho}}$ and $\LO$ belongs to $H^{+}_{1/2-k,\widetilde{\rho}}$. 
\end{corollary}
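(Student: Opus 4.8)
The plan is to deduce the corollary from Theorem \ref{thm:liftpoincare} together with the structure of the spaces of input and output forms. First I would recall that for $k<0$ (so in particular for the weight $-2k<0$ at hand, since $k>0$) the Poincaré series $F_m(z,s_0,-2k)$ at the special value $s_0=1+k$ span $H^+_{-2k}(N)$, up to the Eisenstein/constant part; more precisely, any $f\in H^+_{-2k}(N)$ differs from a finite linear combination of such Poincaré series by a form whose principal part at every cusp is constant, and I would handle that ambiguity exactly as in \cite{BrOno2} (the lift of the extra piece is easily seen to be holomorphic, resp. harmonic, by the same unfolding). By linearity it therefore suffices to prove the claim for $f=F_m(z,s,-2k)$ and then specialize $s\to s_0=1+k$.

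Next I would substitute $s_0=1+k$ into the formulas of Theorem \ref{thm:liftpoincare}. In the even case the lift is a multiple of $\sum_{n\mid m}\kronecker{\Delta}{n} n^{-(k+1)}\F_{\frac{m^2}{4Nn^2}|\Delta|,-\frac{m}{n}r}(\tau,\tfrac{s}{2}+\tfrac14,\tfrac32+k)$, and at $s=s_0$ the spectral parameter becomes $\tfrac{s_0}{2}+\tfrac14=1-\tfrac{3/2+k}{2}$, which is precisely the value at which $\F_{m,h}(\tau,s,\tfrac32+k)$ is the harmonic Poincaré series of weight $3/2+k$; since $3/2+k>2$ here one checks, just as in \cite{BrFu06,BrOno2}, that this harmonic Poincaré series is in fact \emph{weakly holomorphic} (its dual weight $2-(3/2+k)=1/2-k$ is negative, so there are no obstructing cusp forms forcing a non-holomorphic part, and the explicit Fourier expansion of $\mathcal{M}_{s_0,k}$ collapses to a $q$-series with poles only at the cusps). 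Hence $\LE\in M^!_{3/2+k,\widetilde\rho}$. I must also check that the constant $C^{\text{e}}$ is finite and nonzero at $s=s_0$: the factor $\prod_{j=0}^{k-1}(s+j-k)$ vanishes to first order (the $j=0$ term is $s-k$, which at $s_0=1+k$ equals $1\neq0$; the term giving a zero would be $j$ with $s_0+j-k=0$, i.e. $j=-1$, not in range), and $\G(2s_0)$ is finite, so no cancellation issue arises—though I would double-check that $\G(s_0/2+1)$ and the product $\prod_{j=0}^{k/2-1}(s_0/2+1+j)$ stay finite, which they do.

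In the odd case the same specialization gives $\LO$ as a multiple of $\sum_{n\mid m}\kronecker{\Delta}{n} n^{k}\F_{\frac{m^2}{4Nn^2}|\Delta|,-\frac{m}{n}r}(\tau,\tfrac{s}{2}+\tfrac14,\tfrac12-k)$, and again at $s=s_0=1+k$ the parameter $\tfrac{s_0}{2}+\tfrac14=1-\tfrac{1/2-k}{2}$ is exactly the harmonicity value for weight $1/2-k<0$, so $\F$ specializes to a harmonic weak Maaß form with principal part of the shape $q^{-m}\e_h+q^{-m}\e_{-h}+C$; thus $\LO\in H^+_{1/2-k,\widetilde\rho}$. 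Here one must verify $C^{\text{o}}$ is finite and nonzero at $s_0$: the product $\prod_{j=0}^{(k-1)/2}(\tfrac{s}{2}-\tfrac12-j)$ has its $j=0$ factor equal to $\tfrac{s_0}{2}-\tfrac12=\tfrac k2$, nonzero since $k>0$, and the later factors $\tfrac{s_0}{2}-\tfrac12-j=\tfrac k2-j$ are nonzero for $j\le (k-1)/2<k/2$; the factor $1/\G(s_0/2+1/2)$ is finite, and $s=s_0\neq0$. I would also invoke Proposition \ref{prop:efodd} to see that harmonicity ($\Delta_{\kappa}$-annihilation after specialization) is automatic once $f$ is harmonic, which gives a clean independent check that the output lands in $H^+$ rather than merely in some larger space.

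The main obstacle I anticipate is not the transformation law—that is immediate from the intertwining property of the twisted Kudla-Millson kernel (Proposition \ref{prop:intertwiner}) and was already noted before the corollary—but rather controlling the \emph{growth at the cusps}: I need to know that the image Poincaré series $\F_{m,h}(\tau,s_0,\kappa)$ has only the stated principal part and is genuinely weakly holomorphic (even case) or has the incomplete-$\Gamma$ non-holomorphic part only in the prescribed finite shape (odd case). This requires examining the behavior of $\mathcal{M}_{s_0,\kappa}(4\pi m y)$ as $y\to\infty$ at $s=s_0=1-\kappa/2$, where the Whittaker $M$-function degenerates; the relevant computation is that $\mathcal{M}_{1-\kappa/2,\kappa}(y)$ is, up to a constant and a $q$-power, a combination of the constant function and (for the non-holomorphic piece) $\G(1-\kappa,|n|y)$-type terms, exactly as recorded in \cite[Proposition 1.10]{BrHabil}. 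Once that local analysis is in hand, together with the fact that $f$ itself lies in $H^+$ (so the part of $f$ not captured by Poincaré series contributes a lift with constant principal part), the corollary follows.
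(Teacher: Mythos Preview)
Your overall strategy---reduce to Poincar\'e series via Theorem \ref{thm:liftpoincare}, specialize to the harmonic point, and read off the target space---is exactly the paper's approach. However, there is a genuine gap in the reduction step. The scalar Poincar\'e series $F_m(z,s_0,-2k)$ defined in \eqref{def:poincare} have principal part $q^{-m}+C$ at the cusp $\infty$ and \emph{constant} principal part at every other cusp. Consequently their linear combinations only span the subspace $H^{+,\infty}_{-2k}(N)\subset H^{+}_{-2k}(N)$ of forms whose singularities are concentrated at $\infty$. Your assertion that ``any $f\in H^{+}_{-2k}(N)$ differs from a finite linear combination of such Poincar\'e series by a form whose principal part at every cusp is constant'' is therefore false as soon as $\Gamma_0(N)$ has more than one cusp and $f$ has a genuine pole elsewhere. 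This is precisely where the hypothesis that $N$ is square-free is needed, and your argument never uses it.

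The paper closes this gap as follows: for square-free $N$ the group $O(L'/L)$ of Atkin--Lehner involutions acts transitively on the cusps, so
\[
H^{+}_{-2k}(N)=\sum_{\gamma\in O(L'/L)} \gamma\, H^{+,\infty}_{-2k}(N),
\]
and then the equivariance of the lift under $O(L'/L)$ (Proposition \ref{prop:actionO}) transports the conclusion from $H^{+,\infty}_{-2k}(N)$ to all of $H^{+}_{-2k}(N)$. You should replace your spanning claim by this two-step argument.

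One minor slip: in the even case you write $\tfrac{s_0}{2}+\tfrac14=1-\tfrac{3/2+k}{2}$, but in fact $\tfrac{s_0}{2}+\tfrac14=\tfrac{3}{4}+\tfrac{k}{2}=\tfrac{3/2+k}{2}$. This is the \emph{other} root of the harmonic equation $s(1-s)+(\kappa^2-2\kappa)/4=0$, so the output is still harmonic and your subsequent argument (that $S_{1/2-k,\overline{\widetilde{\rho}}}=\{0\}$ forces weak holomorphy) goes through; but the stated equality is incorrect. Your specialization in the odd case is correct.
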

\begin{proof}
Note that for $f\in H^{+}_{3/2+k,\widetilde{\rho}}$, where $k>0$, we have $\xi_{3/2+k}(f)\in S_{1/2-k,\overline{\widetilde{\rho}}}$. Since $\mathrm{dim}(S_{1/2-k,\overline{\widetilde{\rho}}})=0$ for $k> 0$, this implies that $f\in M^{\text{!}}_{3/2+k,\widetilde{\rho}}$.

For odd $k$ the proof is similar to the proof of \cite[Corollary 3.4]{BrOno2}: For $m\in\N_{>0}$ the Poincar\'{e} series span the subspace $H^{+,\infty}_{-2k}(N)$ of harmonic Maa\ss{}  forms having only a pole at $\infty$. By Theorem \ref{thm:liftpoincare} we find that the image of $H^{+,\infty}_{-2k}(N)$ is contained in  $M^{\text{!}}_{3/2+k,\widetilde{\rho}}$ for even $k$, and $H^{+}_{1/2-k,\widetilde{\rho}}$ for odd $k$.
Moreover,
\[
 H^{+}_{-2k}(N)=\sum_{\gamma\in O(L'/L)} \gamma H^{+,\infty}_{-2k}(N),
\]
since the group $O(L'/L)$ of Atkin-Lehner involutions acts transitively on the cusps of $\G_0(N)$ for square-free $N$. Applying Proposition \ref{prop:actionO} now implies the result.
\end{proof}

\begin{remark}
  In the case $k=0$ one has to assume that the constant coefficient of the input function in $M^{\text{!}}_0$ vanishes to guarantee that the lift is weakly holomorphic. Otherwise it is a harmonic Maa\ss{}  form in $H_{3/2,\widetilde{\rho}}$. This case was treated in detail in \cite{BrFu06,AE}.
\end{remark}

For a cusp form $g=\sum_{n=1}^\infty a_n q^n$ we let $L(g,D,s)$ be its twisted $L$-function
\[
L(g,D,s)=\sum_{n=1}^{\infty} \left(\frac{D}{n}\right)a_n n^{-s}.
\]
We obtain a Kohnen-type theorem for the coefficients of the non-holomorphic part of the theta lift.

\begin{theorem}
If $k$ is odd, $N$ is square-free, and $f\in H^+_{-2k}(N)$, then $\LO$ is weakly holomorphic if and only if $f$ is weakly holomorphic or if we have
\[
L(\xi_{-2k}(f),\Delta, k+1) = 0.
\]
\end{theorem}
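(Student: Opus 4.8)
The plan is to express $\LO$ through its action on Poincar\'{e} series and read off when the non-holomorphic part can occur. First I would recall that by Corollary \ref{cor:lifttrans} we already know $\LO \in H^{+}_{1/2-k,\widetilde{\rho}}$, so the only obstruction to weak holomorphicity is the non-holomorphic part, which is governed by $\xi_{1/2-k}(\LO) \in S_{3/2+k,\overline{\widetilde{\rho}}}$. The form $\LO$ is weakly holomorphic if and only if this cusp form vanishes. Since $\xi$ intertwines the lift in a controlled way (the theta kernels are related by \eqref{eq:DeltaTheta}, and $\xi_{-2k}(f) \in S_{2+2k,\bar{\rho}}$ for $f \in H^+_{-2k}(N)$), applying $\xi_{1/2-k}$ to $\LO$ should produce, up to nonzero constants, the theta lift of $\xi_{-2k}(f)$ against the Kudla-Millson kernel, i.e. essentially $\Lambda^{\text{e}}$ (or a Bruinier-Funke-type lift) applied to the cusp form $\xi_{-2k}(f)$. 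If $f$ is already weakly holomorphic then $\xi_{-2k}(f)=0$ and there is nothing to prove, so assume otherwise.

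Next I would use Theorem \ref{thm:liftpoincare} and the spanning statement: for square-free $N$ the space $H^+_{-2k}(N)$ is spanned (via Atkin-Lehner translates, using Proposition \ref{prop:actionO}) by the Poincar\'{e} series $F_m(z,s,-2k)$ specialized at $s_0 = 1+k$ (the harmonic point for weight $-2k$). The key computation is to track, in the formula
\[
\Lambda^{\text{o}}_{\Delta,r}(\tau, F_m(z,s,-2k))= C^{\text{o}}\cdot
\sum_{n\mid m}\left(\frac{\Delta}{n}\right) n^{k}\, \F_{\frac{m^2}{4Nn^2}\abs{\Delta},-\frac{m}{n}r}\left(\tau,\tfrac{s}{2}+\tfrac14,\tfrac12-k\right),
\]
which ingredients survive at $s=s_0=1+k$. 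The Poincar\'{e} series $\F_{\ast}(\tau, s/2+1/4, 1/2-k)$ at the harmonic point $s/2+1/4 = (1+k)/2+1/4 = 3/4 + k/2$, i.e. $= 1-(1/2-k)/2$, is harmonic, and its non-holomorphic part is itself proportional to a Poincar\'{e} series in the dual weight $3/2+k$. So the non-holomorphic part of $\LO$ is a linear combination of cuspidal Poincar\'{e} series of weight $3/2+k$ with coefficients built from $C^{\text{o}}|_{s=1+k}$, the divisor sums $\sum_{n\mid m}\leg{\Delta}{n}n^{k}(\cdots)$, and the principal-part data of $f$; pairing these against an arbitrary cusp form $g \in S_{3/2+k,\overline{\widetilde{\rho}}}$ and unfolding turns the whole expression into a multiple of $L(\xi_{-2k}(f),\Delta,k+1)$ — this is exactly the "twisted central value" appearing because the dual weight is $3/2+k$ and the relevant special point of the $L$-function sits at $s=k+1$, the center of the functional equation for the shadow $\xi_{-2k}(f)$ of weight $2+2k$.

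Concretely, the steps in order: (1) reduce to showing the non-holomorphic part of $\LO$ vanishes iff $L(\xi_{-2k}(f),\Delta,k+1)=0$, using $\xi_{1/2-k}$ and the dimension-zero fact only where needed; (2) compute $\xi_{1/2-k}\LO$ via \eqref{eq:DeltaTheta}, \eqref{eq:deltatheta} and the commutation lemmas (Lemma \ref{lm:reldiff}, Propositions \ref{pro:pcmaassrai}, \ref{pro:pcmaasslow}, \ref{prop:diffwhittw}), identifying it with a holomorphic theta lift of the cusp form $\xi_{-2k}(f)$; (3) evaluate that holomorphic lift on Poincar\'{e} series by specializing Theorem \ref{thm:liftpoincare} (or the analogous Bruinier-Funke computation) to get its Fourier coefficients as divisor-twisted sums; (4) pair with an arbitrary cusp form and unfold to recognize the resulting quantity as a nonzero multiple of $L(\xi_{-2k}(f),\Delta,k+1)$, so that $\xi_{1/2-k}\LO=0$ iff that $L$-value vanishes. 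I expect the main obstacle to be step (2)–(3): carefully matching the constants and the divisor sums through the raising/lowering operators and the Laplace-transform identities so that the central $L$-value emerges with the precise twist $\leg{\Delta}{\cdot}$ and the precise evaluation point $k+1$; in particular one must verify that no spurious factor vanishes or blows up at the harmonic specialization $s_0=1+k$ (the products $\prod_{j=0}^{k-1}(s+j-k)$ and $\prod_{j=0}^{(k-1)/2}(s/2-1/2-j)$ in $C^{\text{o}}$ must be checked to be nonzero there), and that the pairing against cusp forms — which is where Section \ref{sec:orthcusp}'s orthogonality and the Bruinier-Funke pairing (Proposition \ref{prop:extpairing}) enter — genuinely sees the full $L$-series and not a partial sum.
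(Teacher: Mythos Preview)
Your setup through step (3) is essentially the paper's: reduce via $O(L'/L)$ to Poincar\'{e} series at the cusp $\infty$, use Theorem~\ref{thm:liftpoincare} to read off the principal part of $\LO$, and then test weak holomorphicity by pairing against all $g\in S_{3/2+k,\overline{\widetilde{\rho}}}$ using the Bruinier--Funke pairing. Your check that the factors in $C^{\text{o}}$ do not vanish at $s_0=1+k$ is also correct.

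The gap is step (4). You assert that after pairing and ``unfolding'' the result is a nonzero multiple of $L(\xi_{-2k}(f),\Delta,k+1)$. This is not what happens, and no amount of unfolding of Poincar\'{e} series alone will produce a special $L$-value: the pairing $\{\LO,g\}$ depends on $g$, not just on $f$ and $\Delta$, so it cannot simply \emph{equal} a multiple of the $L$-value. What the paper observes is that the divisor sums $\sum_{n\mid m}\leg{\Delta}{n}n^{k}\,b\!\left(\tfrac{m^2|\Delta|}{4Nn^2},\tfrac{m}{n}r\right)$ appearing in $\{\LO,g\}$ are exactly the $m$-th Fourier coefficients of the \emph{Shimura lift} $\mathcal{S}_{\Delta,r}(g)\in S_{2k+2}(N)$, so that
\[
\{\LO,g\}=2C^{\text{o}}\,\{f,\mathcal{S}_{\Delta,r}(g)\}=2C^{\text{o}}\,(\xi_{-2k}(f),\mathcal{S}_{\Delta,r}(g)).
\]
Adjointness of Shimura and Shintani then gives $(\mathcal{S}^*_{\Delta,r}(\xi_{-2k}(f)),g)$, which vanishes for \emph{all} $g$ iff the Shintani lift $\mathcal{S}^*_{\Delta,r}(\xi_{-2k}(f))$ is identically zero. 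The final and indispensable input is the Waldspurger/Kohnen--Zagier formula from \cite{GKZ}, which expresses the square of each Fourier coefficient of $\mathcal{S}^*_{\Delta,r}(\xi_{-2k}(f))$ as a product $L(\xi_{-2k}(f),\Delta,k+1)\cdot L(\xi_{-2k}(f),D',k+1)$ with $D'$ varying; hence the Shintani lift vanishes iff the fixed factor $L(\xi_{-2k}(f),\Delta,k+1)$ does. None of the three ingredients---the Shimura-lift identification of the divisor sums, the Shimura/Shintani adjointness, and the Waldspurger-type formula---appears in your proposal, and your alternative route of computing $\xi_{1/2-k}\LO$ directly as a theta lift of $\xi_{-2k}(f)$ would, if carried out, land you on the Shintani lift anyway, after which you still need the Kohnen--Zagier formula to extract the $L$-value.
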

\begin{proof}
Here we partly follow the proof of Bruinier and Ono in \cite{BrOno2}. Since $O(L'/L)$ acts transitively on the cusps, it suffices to consider the case when $f$ has only a pole at $\infty$. Again, we obtain the result for the entire space $H_{-2k}^{+}(N)$ by using Proposition \ref{prop:actionO}. For $f\in H_{-2k}^{+,\infty}(N)$ we denote the Fourier expansion of the holomorphic part at the cusp $\infty$ by
\[
 f(z)=\sum_{m\in\Z} a_f(m)e(mz).
\]
Then we can write $f$ as a linear combination of Poincar\'{e} series 
\[
 f(z)=\sum_{m>0} a_f(-m) F_m(z,1+k,-2k).
\]
By Theorem \ref{thm:liftpoincare} the principal part of $\LO$ is given by
\[
 C^{\text{o}}\cdot \sum_{m>0} a_f(-m) \sum\limits_{n\mid m} \left(\frac{\Delta}{n}\right)n^{k} e\left(-\frac{m^2\abs{\Delta}}{4Nn^2}z\right)(\mathfrak{e}_{rm/n}+\mathfrak{e}_{-rm/n}),
\]
where $C^{\text{o}}$ is as in Theorem \ref{thm:liftpoincare}.

We now use well-known pairing between the spaces $H^{+}_{1/2-k,\widetilde{\rho}}$ and $S_{3/2+k,\overline{\widetilde{\rho}}}$ (see Section \ref{sec:diffop}). To prove that the lift is weakly holomorphic we have to show that $\left\{\LO,g\right\}=0$ for every cusp form $g\in S_{3/2+k,\overline{\widetilde{\rho}}}$. Denoting the Fourier coefficients of $g$ by $b(n,h)$, we have
\begin{align*}
 \left\{\LO,g\right\} &= 2C^{\text{o}} \sum_{m>0} a_f(-m) \sum\limits_{n\mid m} n^{k}  \left(\frac{\Delta}{n}\right) b\left(\frac{m^2\abs{\Delta}}{4Nn^2},\frac{m}{n}r\right)
\\
&= 2C^{\text{o}} \left\{f, \mathcal{S}_{\Delta, r}(g)\right\} = 2C^{\text{o}}(\xi_{-2k}(f),\mathcal{S}_{\Delta, r}(g)),
\end{align*}
where $\mathcal{S}_{\Delta,r}(g)\in S_{2k+2}(N)$ denotes the Shimura lift of $g$ as in \cite{skoruppa}. If $f$ is weakly holomorphic this expression vanishes, since $\xi_{-2k}(f)=0$.

If $f\in H^+_{-2k}(N)\setminus M^{\text{!}}_{-2k}(N)$, we have by the adjointness of the Shintani and Shimura lifting (see for example Section II.3 of \cite{GKZ}, and \cite{skoruppa,skoruppa2} for the case of skew-holomorphic Jacobi forms)
\[
   (\xi_{-2k}(f),\mathcal{S}_{\Delta, r}(g))= (\mathcal{S}^*_{\Delta, r}(\xi_{-2k}(f)),g),
\]
where $S^*_{\Delta,r}$ denotes the Shintani lifting. This equals zero for all cusp forms $g$ if and only if the  Shintani lifting of $\xi_{-2k}(f)$ vanishes. We have that (in terms of Jacobi forms; for the definition of Jacobi forms and the definition of the cycle integral $r$ see \cite{GKZ})
\begin{equation}\label{proof:shint}
\mathcal{S}^*_{\Delta, r}(\xi_{-2k}(f))= \left(\frac{i}{2N}\right)^{k} \sum_{\substack{n,r_0\in \Z\\r_0^2<4nN}}r_{k+1,N,\Delta(r_0^2-4nN),rr_0,\Delta}(\xi_{-2k}(f))q^n\zeta^{r_0}.
\end{equation}
Now by the Theorem and Corollary in Section II.4 in \cite{GKZ} we have
\begin{align*}
&\abs{ r_{k+1,N,\Delta(r_0^2-4nN),rr_0,\Delta}(\xi_{-2k}(f))}^2=\abs{\Delta}^{k+1/2}\abs{r_0^2-4nN}^{k+1/2} (k!)^2 N^{-k}
\\
&\quad\quad\quad\times 2^{-3k-2}\pi^{-2(k+1)} L(\xi_{-2k}(f),\Delta,k+1)\cdot L(\xi_{-2k}(f),r_0^2-4nN,k+1).
\end{align*}
Since $r_0$ and $n$ vary in \eqref{proof:shint} the Shintani lift $\mathcal{S}^*_{\Delta, r}(\xi_{-2k}(f))$ vanishes if and only if $L(\xi_{-2k}(f),\Delta,k+1)$ vanishes.
\end{proof}


Now we turn to the computation of the Fourier coefficients of positive index of the holomorphic part of the theta lift. 

Let $h\in L'/L$ and $m\in\Q_{>0}$ with $m \equiv \sgn(\Delta)Q(h)\ (\Z)$.
We define a twisted Heegner divisor on $M$ by
\[
Z_{\Delta,r}(m,h)= \sum\limits_{\lambda \in \G_0(N) \backslash L_{rh,m\abs{\Delta}}}\frac{\chi_{\Delta}(\lambda)}{\left|\overline\G_{\lambda}\right|} Z(\lambda).
\]
Here $\overline{\G}_\lambda$ denotes the stabilizer of $\lambda$ in $\overline{\G_0(N)}$.

Let $f$ be a harmonic weak Maa\ss{}  form of weight $-2k$ in $H^{+}_0(N)$. We put $\partial f:=R_{-2k}^kf(z)$.
Then the twisted modular trace function is defined as follows
\begin{equation}\label{def:trace11}
\mt_{\Delta,r}(f;m,h) = \sum\limits_{z\in Z_{\Delta,r}(m,h)}\partial f(z)=\sum\limits_{\lambda\in \G_0(N)\setminus L_{rh,\abs{\Delta}m}} \frac{\chi_{\Delta}(\lambda)}{\left|\overline\G_{\lambda}\right|}\partial f(D_{\lambda}).
\end{equation}

\begin{theorem}\label{thm:main}
We let $f\in H^{+}_{-2k}(N)$, $h\in L'/L$, and let $m\in \Q_{>0}$ with $m\equiv \sgn(\Delta) Q(h)\,(\Z)$. We obtain the following results:
\begin{enumerate}
 \item \label{thm:fceventwist}The $(m,h)$-th Fourier coefficient of the holomorphic part of $\LE$ equals
\[
 (-4\pi m)^{k/2}\, \mt_{\Delta,r}(f;m,h).
\]
 \item\label{thm:fcoddtwist} The $(m,h)$-th Fourier coefficient of the holomorphic part of $\LO$ equals
\begin{align*}
 \left(\frac{1}{4\pi m}\right)^{(k+1)/2} \prod_{j=0}^{(k-1)/2}\left(\frac{k}{2}+j\right)\left(j-\frac{k+1}{2}\right) \,\mt_{\Delta,r}(f;m,h) 
.
\end{align*}
\end{enumerate}
\end{theorem}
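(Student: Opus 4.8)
The plan is to obtain the coefficients in both parts from the Fourier expansion in $\tau$ of the inner theta integral
\[
 A_{\Delta,r}(\tau,f):=\int_M (R^{k}_{-2k,z}f)(z)\,\thetaL{\phikm},
\]
a vector valued Maa\ss{} form of weight $3/2$ for $\widetilde{\rho}$, and then to push this expansion through the operator $R_{3/2,\tau}^{k/2}$ (for even $k$), resp.\ $L_{3/2,\tau}^{(k+1)/2}$ (for odd $k$), which produces $\LE$, resp.\ $\LO$. Since these operators act only in the $\tau$ variable and commute with extracting Fourier coefficients, the proof splits into (a) computing the positive Fourier coefficients of $A_{\Delta,r}(\tau,f)$, and (b) tracking how $R_{\kappa,\tau}$ and $L_{\kappa,\tau}$ move the relevant Whittaker functions.

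For step (a) I would run the (regularized) unfolding argument of Bruinier and Funke \cite[Section 7]{BrFu06}, in the twisted form of \cite{AE}, applied to the weight-$0$ input $R^{k}_{-2k,z}f$. Using that the Kudla--Millson kernel decays rapidly in $z$ \cite[Proposition 4.1]{Funke} and that $R^{k}_{-2k,z}f$ has at most moderate growth, one expands $\thetaL{\phikm}$ in $\tau$ --- conveniently through the smaller lattice $K$ as in Proposition \ref{twistsmallK} --- and unfolds the sum over $\widetilde{\G}_\infty\backslash\widetilde{\G}$ against $M$; the index-$m$ contribution then collapses onto the Heegner points $D_\lambda$ with $\lambda\in L_{rh,\abs{\Delta}m}$, each weighted by $\chi_\Delta(\lambda)/\abs{\overline{\G}_\lambda}$, and the local integral at $D_\lambda$ evaluates to $(R^{k}_{-2k}f)(D_\lambda)$ by the Poincar\'{e}-duality property of $\varphi^0_{\Delta,\text{KM}}$ together with the relation \eqref{eq:DeltaTheta} to the Siegel kernel. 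By \eqref{eq:deltatheta} and Lemma \ref{lm:reldiff} (as in the proof of Proposition \ref{prop:efodd}), $A_{\Delta,r}(\tau,f)$ is an eigenform of $\Delta_{3/2,\tau}$ with eigenvalue $-k(k+1)/4$, so its decaying Fourier term of index $m>0$ in the $h$-component has the shape $c\cdot\mathcal{W}_{s_0,3/2}(4\pi m v)e(mu)$ with $s_0=\tfrac{2k+3}{4}$, and the point of step (a) is that $c=\mt_{\Delta,r}(f;m,h)$. (The remaining, $\mathcal{M}$-type, part of the expansion carries the principal part already found in Theorem \ref{thm:liftpoincare}, and any correction stemming from $R^{k}_{-2k,z}f$ being non-holomorphic is absorbed there; it does not affect $c$.)

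For step (b), in the even case iterating the second identity of Proposition \ref{prop:diffwhittw} gives
\[
 R_{3/2,\tau}^{k/2}\bigl(\mathcal{W}_{s_0,3/2}(4\pi m v)e(mu)\bigr)=(-4\pi m)^{k/2}\,\mathcal{W}_{s_0,3/2+k}(4\pi m v)e(mu),
\]
and since $s_0=\tfrac12(3/2+k)$ an elementary Whittaker evaluation (using $W_{a,a-1/2}(y)=e^{-y/2}y^{a}$) shows $\mathcal{W}_{s_0,3/2+k}(4\pi m v)e(mu)=q^m$, consistent with $\LE\in M^{\text{!}}_{3/2+k,\widetilde{\rho}}$ (Corollary \ref{cor:lifttrans}); hence the $(m,h)$-th holomorphic coefficient of $\LE$ is $(-4\pi m)^{k/2}\,\mt_{\Delta,r}(f;m,h)$. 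In the odd case, iterating the first identity of Proposition \ref{prop:diffwhittw} from weight $3/2$ down to $1/2-k$ multiplies $\mathcal{W}_{s_0,3/2}(4\pi m v)e(mu)$ by
\[
 \prod_{j=0}^{(k-1)/2}\frac{1}{4\pi m}\Bigl(s_0-\tfrac{3/2-2j}{2}\Bigr)\Bigl(1-s_0-\tfrac{3/2-2j}{2}\Bigr)
 =\Bigl(\frac{1}{4\pi m}\Bigr)^{(k+1)/2}\prod_{j=0}^{(k-1)/2}\Bigl(\frac{k}{2}+j\Bigr)\Bigl(j-\frac{k+1}{2}\Bigr),
\]
the equality being the substitution $s_0=\tfrac{2k+3}{4}$; and once more $\mathcal{W}_{s_0,1/2-k}(4\pi m v)e(mu)=q^m$ (this time via $U(a,a+1,y)=y^{-a}$), so one reads off exactly the coefficient of part \eqref{thm:fcoddtwist}, consistent with $\LO\in H^{+}_{1/2-k,\widetilde{\rho}}$.

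The main obstacle is step (a): making the Bruinier--Funke unfolding rigorous for the non-harmonic input $R^{k}_{-2k}f$ (which is a $\Delta_{0,z}$-eigenform of eigenvalue $-k(k+1)$), bookkeeping the genus character $\chi_\Delta$ through the lattice manipulations as in \cite{AE}, and correctly isolating the $\mathcal{W}_{s_0,3/2}$-part of the $\tau$-expansion of $A_{\Delta,r}(\tau,f)$ --- in particular checking that the contribution of $\overline{\partial}_z(R^{k}_{-2k}f)$ to the local integrals does not leak into the positive holomorphic coefficients. Step (b), given (a), is routine manipulation with Proposition \ref{prop:diffwhittw} and standard Whittaker-function identities.
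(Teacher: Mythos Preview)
Your step (b) is correct and matches the paper's treatment: once you know that the positive-index term of $A_{\Delta,r}(\tau,f)$ is exactly $\mt_{\Delta,r}(f;m,h)\cdot\mathcal{W}_{s_0,3/2}(4\pi mv)e(mu)$ with $s_0=k/2+3/4$, iterating Proposition~\ref{prop:diffwhittw} and simplifying the endpoint Whittaker functions gives the stated coefficients.

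Step (a), however, has a real gap, and your sketch points in the wrong direction in two places. First, the $K$-decomposition of Proposition~\ref{twistsmallK} is the tool for computing the lift of \emph{Poincar\'e series} (Theorem~\ref{thm:liftpoincare}); for the positive Fourier coefficients one instead works directly with the $\lambda$-expansion of the Kudla--Millson kernel and unfolds the $\G_0(N)$-orbit of each $\lambda\in L_{rh,\abs{\Delta}m}$. Second, and more seriously: the ``Poincar\'e-duality property'' of $\varphi^0_{\text{KM}}$ yields the bare value $\partial f(D_\lambda)$ only when $\partial f$ is harmonic. Here $\partial f=R_{-2k}^kf$ is a $\Delta_{0,z}$-eigenfunction with eigenvalue $-k(k+1)\neq 0$, so the local integral is \emph{not} simply the value at the Heegner point; and the eigenvalue argument for $A_{\Delta,r}$ only places the coefficient in the two-dimensional span of $\mathcal{M}_{s_0,3/2}$ and $\mathcal{W}_{s_0,3/2}$ --- it fixes neither the branch nor the normalization. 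So you have no way to conclude $c=\mt_{\Delta,r}(f;m,h)$ rather than $c=C_k\cdot\mt_{\Delta,r}(f;m,h)$ for some undetermined constant $C_k$.

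The paper resolves this with the Katok--Sarnak argument: after unfolding, the local integral factors as $\partial f(D_\lambda)$ times a universal function $Y_\lambda(\sqrt{mv})$ in which the non-harmonicity of $\partial f$ enters through the spherical function of eigenvalue $-k(k+1)$, which is the Legendre polynomial $P_k$. One then evaluates $Y_\lambda(t)$ explicitly by reducing to Laplace transforms of $P_{k\pm 1}$ and using Whittaker recurrences, obtaining $Y_\lambda(t)=e^{2\pi t^2}\mathcal{W}_{s_0,3/2}(4\pi t^2)$ with coefficient exactly~$1$. This explicit computation is the actual content of step~(a); neither \eqref{eq:DeltaTheta} nor the eigenvalue of $A_{\Delta,r}$ is a substitute for it.
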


\begin{remark}
 For the case $k=0$ see  Theorem 4.5 in \cite{BrFu06} for the results on the untwisted trace and Theorem 5.5 in \cite{AE} for the twisted trace.
\end{remark}

\begin{proof}
To ease notation we prove the results when $\Delta=1$. Using the arguments of the proof of Theorem 5.5 in \cite{AE} it is straightforward to deduce the general result.

We first consider the Fourier expansion of $\int_M \partial f(z) \Theta(\tau,z,\phikm)$, namely
\begin{align}\label{proof:fc12}
\sum_{h\in L'/L}\sum_{m\in\Q}\left(\sum_{\lambda\in L_{m,h}}\int_M \partial f(z) \phikm^0(\sqrt{v}\lambda,z) \right)e^{2\pi im\tau}.
\end{align}
We denote the $(m,h)$-th coefficient of the holomorphic part of \eqref{proof:fc12} by $C(m,h)$.

Using the usual unfolding argument and then an argument of Katok and Sarnak \cite{KS} we find similarly as in the proof of Theorem 3.6 in \cite{BrOno2} 
\begin{equation}
 C(m,h)=\sum_{\lambda\in L_{m,h}} \frac{1}{|\bar{\G}_\lambda|}\partial f(\mathbb{D}_\lambda) Y_{\lambda}(\sqrt{mv}),
\end{equation}
where $\lambda=-k(k+1)$ and
\begin{equation}\label{proof:fc13}
 Y_{\lambda}(t)=4\pi \int_1^\infty \phikm^0(t\alpha(a)^{-1} \textbf{.} \lambda(i),i)\omega_\lambda(\alpha(a))\frac{a^2-a^{-2}}{2}\frac{da}{a}.
\end{equation}
Here $\omega(\alpha(a))$ is the standard spherical function of eigenvalue $\lambda$ and $\alpha(a)=\left(\begin{smallmatrix}a&0\\0&a^{-1}\end{smallmatrix}\right)$. Note that $\omega(\alpha(a))=\omega\left(\frac{a^2+a^{-2}}{2}\right)$.
Substituting $a=e^{r/2}$ we obtain that \eqref{proof:fc13} equals
\begin{equation*}
 2\pi \int_0^\infty \left(4t^2 \cosh(r)^2-\frac{1}{2\pi}\right) \omega_\alpha(\cosh (r)) \sinh(r) e^{-4\pi t^2\sinh(r)^2}dr.
\end{equation*}
In this case the standard spherical function is given by the Legendre polynomial $P_k(x)=\frac{1}{2^k k!} \frac{d^k}{dx^k}(x^2-1)^k$ \cite[Chapter 1]{Iwaniec}. By substituting $x=\sinh( r)^2$ we obtain
\begin{equation}\label{proof:fc15}
 4\pi t^2 \int_0^\infty \sqrt{1+x} P_k(\sqrt{1+x}) e^{-4\pi t^2 x} dx -\frac{1}{2} \int_0^\infty \frac{1}{\sqrt{1+x}} P_k(\sqrt{1+x}) e^{-4\pi t^2 x} dx.
\end{equation}

To evaluate the first integral in \eqref{proof:fc15} we use the following recursion formula for the Legendre polynomial (see for example equation (8.5.3) in \cite{Pocket})
\[
 \sqrt{1+x}P_k(\sqrt{1+x})=\frac{1}{2k+1}\left((k+1)P_{k+1}(\sqrt{1+x})+kP_{k-1}(\sqrt{1+x})\right).
\]
Thus, we are left with
\begin{equation}\label{proof:fc14}
4 \pi t^2 \int_0^\infty \left(\frac{k+1}{2k+1}  P_{k+1}(\sqrt{1+x}) + \frac{k}{2k+1}P_{k-1}(\sqrt{1+x})\right)e^{-4\pi t^2 x} dx,
\end{equation}
which is a Laplace transform computed in \cite{tables} (see equation (7) on page 180). It equals
\begin{equation}\label{proof:fc16}
(4\pi t^2)^{-1/4} e^{2\pi t^2} \left(\frac{k+1}{2k+1} W_{1/4,k/2+3/4}(4\pi t^2)+ \frac{k}{2k+1} W_{1/4,k/2-1/4}(4\pi t^2)\right).
\end{equation}

The second integral in \eqref{proof:fc15} can be evaluated in the same way (see equation (8) on page 180 of \cite{tables}) and equals
\begin{equation}\label{proof:fc17}
-\frac12 (4\pi t^2)^{-3/4} e^{2\pi t^2} W_{-1/4,k/2+1/4}(4\pi t^2).
\end{equation}

Using (13.1.33), (13.4.17), and (13.4.20) in \cite{Pocket} it is not hard to show that the sum of the expressions in \eqref{proof:fc16} and \eqref{proof:fc17} is equal to
\[
 e^{2\pi t^2}\mathcal{W}_{k/2+3/4,3/2}(4\pi t^2).
\]

Thus, $C(m,h)$ is given by
\[
 C(m,h)= \sum_{\lambda\in L_{m,h}} \frac{1}{|\bar{\G}_\lambda|}\partial f(\mathbb{D}_\lambda)  e^{2\pi mv}\mathcal{W}_{k/2+3/4,3/2}(4\pi m v).
\]

We now have to apply the iterated raising respectively lowering operator to the  Fourier expansion in \eqref{proof:fc12}, which boils down to evaluating it on
\[
 q^m  e^{2\pi mv}\mathcal{W}_{k/2+3/4,3/2}(4\pi m v)=\mathcal{W}_{k/2+3/4,3/2}(4\pi m v) e(mx).
\]

By Proposition \ref{prop:diffwhittw}  we obtain
\begin{align*}
R_{3/2,\tau}^{k/2}&\left(\mathcal{W}_{k/2+3/4,3/2}(4\pi m v) e(mx)\right)
\\
&=(-4\pi m)^{k/2} \mathcal{W}_{k/2+3/4,3/2+k}(4\pi m v) e(mx)
=(-4\pi m)^{k/2} q^m,
\end{align*}
since $W_{\nu,\mu}(y)= y^{k/2}e^{-y/2}$ for $y>0$, and $\nu=k/2$, $\mu=k/2/-1/2$ \cite[Chapter 13]{Pocket}. 

For the lowering operator a repeated application of Proposition \ref{prop:diffwhittw} yields
\begin{align*}
&L_{3/2,\tau}^{(k+1)/2} \left(\mathcal{W}_{k/2+3/4,3/2}(4\pi m v) e(mx)\right)
\\
& =\left(\frac{1}{4\pi m}\right)^{(k+1)/2}  \prod_{j=0}^{(k-1)/2}\left(\frac{k}{2}+j\right)\left(j-\frac{k+1}{2}\right)\mathcal{W}_{k/2+3/4,1/2-k}(4\pi m v).
\end{align*}
Again the Whittaker function simplifies, namely
$
  \mathcal{W}_{k/2+3/4,1/2-k}(4\pi m v)=e^{-2\pi mv},
$
which implies the desired statement.

\end{proof}

\section{Orthogonality to cusp forms}\label{sec:orthcusp}

In this section we show that $\LE\in M^{\text{!}}_{3/2,\widetilde{\rho}}$ is orthogonal to cusp forms with respect to the regularized Petersson inner product. Recall that for $g\in S_{3/2+k,\widetilde{\rho}}$ we have
\[
 \left(\LE, g(\tau)\right)_{3/2+k,\widetilde{\rho}}^{\text{reg}} = \lim_{t\rightarrow\infty} \int_{\F_t}\langle \LE,g(\tau) \rangle v^{3/2+k} d\mu(\tau),
\]
where $\F_t$ denotes the truncated fundamental domain $\F_t=\left\{\tau\in\h; \Im(\tau)\leq t\right\}$.

\begin{theorem}\label{thm:orthcusp}
 For $\LE\in M^{\mathrm{!}}_{3/2+k,\widetilde{\rho}}$, where $k\geq 0$, and $g\in S_{3/2+k,\widetilde{\rho}}$ we have
\[
 \left(\LE, g(\tau)\right)_{3/2+k,\widetilde{\rho}}^{\text{reg}}=0.
\]
\end{theorem}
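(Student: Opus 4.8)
The plan is to exploit the key differential identity \eqref{eq:DeltaTheta} relating the Kudla-Millson kernel to the Siegel theta function, together with Stokes' theorem on the truncated fundamental domain $\mathcal{F}_t$, and then let $t\to\infty$. Since $\Lambda^{\text{e}}_{\Delta,r}(\tau,f)=R_{3/2,\tau}^{k/2}\Lambda^{\text{e},0}_{\Delta,r}(\tau,f)$ where I abbreviate $\Lambda^{\text{e},0}_{\Delta,r}(\tau,f):=\int_M (R_{-2k,z}^k f)(z)\thetaL{\phikm}$, I would first move the raising operators off $\Lambda$ and onto $g$ using the adjointness of $R_{3/2+k-2}$ and $L_{3/2+k}$ under the Petersson pairing (integration by parts in $\tau$, with the boundary term at $v=t$ vanishing in the limit because $g$ is a cusp form and decays rapidly). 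This reduces the claim to showing that $(\Lambda^{\text{e},0}_{\Delta,r}(\tau,f), \tilde g)^{\text{reg}}_{3/2,\widetilde\rho}=0$ for the cusp form $\tilde g=L_{3/2+k}^{k/2}g\in S_{3/2,\widetilde\rho}$ — i.e. it suffices to treat the weight-$3/2$ lift $\Lambda^{\text{e},0}$.

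For the weight-$3/2$ statement, I would write the regularized inner product as $\lim_{t\to\infty}\int_{\mathcal{F}_t}\langle \Lambda^{\text{e},0}_{\Delta,r}(\tau,f),\tilde g(\tau)\rangle v^{3/2}\,d\mu(\tau)$, unfold the $M$-integral, and use \eqref{eq:DeltaTheta}, which says $L_{3/2,\tau}\thetaL{\phikm}=\frac{1}{4\pi}\Delta_{0,z}\thetaL{\phis}\cdot\Omega$. The idea is that $\tilde g\cdot v^{3/2}\,d\mu$ can be written via the lowering operator: since $\tilde g$ is holomorphic, $L_{3/2}$ applied to the antiholomorphic $\overline{\tilde g}$-side produces the needed derivative, so $\langle\Lambda^{\text{e},0},\tilde g\rangle v^{3/2}d\mu$ is exact up to terms involving $\Delta_{0,z}\Theta_{\phis}$ integrated against $\partial f=R_{-2k}^k f$ over $M$. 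Then I would move the Laplacian $\Delta_{0,z}$ onto $R_{-2k}^k f$ by Green's theorem on $M$ (the boundary terms vanish by the rapid decay of the theta kernel in $z$, \cite[Proposition 4.1]{Funke}, and by the growth behavior of $f$ at the cusps handled as in \cite{BrFu06}); since $\Delta_{0,z}(R_{-2k}^k f)=R_{-2k}^k\Delta_{-2k,z}f+$ (lower-order raising terms) $=0$ when $f$ is harmonic by Lemma \ref{lm:reldiff} — here one uses $\Delta_0 R_{-2k}^k=R_{-2k}^k(\Delta_{-2k}-k(k+1))$ and $\Delta_{-2k}f=0$ — the remaining $z$-integral collapses. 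What is left is a pure boundary term in $\tau$ at $v=t$, coming from Stokes on $\mathcal{F}_t$; I would show this tends to zero as $t\to\infty$, using that $\tilde g$ is a cusp form (exponential decay) while $\Lambda^{\text{e},0}$ has at most polynomial-times-linear growth in $v$.

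Concretely the order of operations is: (1) reduce to weight $3/2$ by the $R/L$-adjointness in $\tau$; (2) unfold and express $\langle\Lambda^{\text{e},0},\tilde g\rangle v^{3/2}d\mu$ as an exact form plus a term with $\Delta_{0,z}\Theta_{\phis}$, via \eqref{eq:DeltaTheta}; (3) apply Green's theorem in $z$ on $M$ to transfer $\Delta_{0,z}$ onto $\partial f$, killing it by harmonicity (Lemma \ref{lm:reldiff}); (4) apply Stokes in $\tau$ on $\mathcal{F}_t$ to the exact form and bound the boundary integral at $v=t$; (5) take $t\to\infty$. The main obstacle I anticipate is Step (3)–(4): controlling all the boundary contributions simultaneously — the cusps of $M$ in the $z$-integral (where $f$ may grow, so one must be careful, as in \cite[Section 4]{BrFu06}, that the pairing against the Schwartz kernel still has vanishing boundary term) and the truncation boundary $v=t$ in the $\tau$-integral — and making sure the regularization is genuinely needed only to handle $\Lambda^{\text{e},0}$'s growth, not to produce a spurious nonzero residue. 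Once those boundary analyses are in place, harmonicity of $f$ does all the real work.
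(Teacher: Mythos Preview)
Your step (1) is close to the paper's argument for $k>0$, but you miss the crucial observation that finishes it: since $g$ is \emph{holomorphic}, $L_{3/2+k}\,g=0$, so $\tilde g=L_{3/2+k}^{k/2}g=0$ identically for $k\geq 2$. Thus after one integration by parts in $\tau$ there is no bulk term at all, only the boundary term at $v=t$, and the vanishing of that boundary term (which you correctly anticipate) is the entire content of the $k>0$ case. The paper does exactly this, except that it first interchanges the $z$- and $\tau$-integrals and applies Stokes with the kernel $R_{3/2}^{k/2-1}\Theta$ rather than with $R_{3/2}^{k/2-1}\Lambda^{\mathrm{e},0}$; working with the theta kernel makes the decay estimates cleaner.

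Your steps (2)--(4) then contain two problems. First, step (4) is wrong as written: Lemma \ref{lm:reldiff} gives $\Delta_{0}R_{-2k}^k f=R_{-2k}^k(\Delta_{-2k}-k(k+1))f=-k(k+1)\,\partial f$, which is \emph{not} zero for $k>0$; $\partial f$ is an eigenfunction of $\Delta_{0,z}$, not a harmonic function. (This error is moot because step (1) already disposes of $k>0$, but it shows the ``harmonicity does all the work'' slogan is misleading.) Second, and more seriously, for $k=0$ --- the only case where something beyond step (1) is needed --- your plan to invoke \eqref{eq:DeltaTheta} does not get off the ground: that identity produces $L_{3/2,\tau}\Theta_{\phikm}$, whereas the integrand contains $\Theta_{\phikm}$ itself, and there is no evident way to write $\langle\Theta_{\phikm},\tilde g\rangle v^{3/2}\,d\mu$ as an exact form plus a term involving $\Delta_{0,z}\Theta_{\phis}$. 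The paper handles $k=0$ by a different route: after swapping integrals one studies $h(z):=\lim_{t\to\infty}\int_{\mathcal F_t}\langle\Theta(\tau,z,\phikm),g(\tau)\rangle v^{3/2}\,d\mu(\tau)$ as a function on $M$, uses \eqref{eq:deltatheta} (not \eqref{eq:DeltaTheta}) together with \cite[Lemma 4.3]{BrHabil} to show $\Delta_{0,z}h=0$, deduces that $h$ is constant by the square-exponential decay of $\Theta$ at the cusps, and then shows the constant vanishes by sending $z$ to a cusp. You should replace your (2)--(4) with this harmonic-function/maximum-principle argument for $k=0$.
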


Since $\xi_k(F)$ is a cusp form for a harmonic Maa\ss{}  form $F$, Theorem \ref{thm:orthcusp} together with Proposition \ref{prop:extpairing} directly implies
\begin{corollary}\label{thm:duality}
We let $F$ be a harmonic weak Maa\ss{}  form of weight $\kappa$ transforming with representation $\overline{\widetilde{\rho}}$. Here $\kappa= 3/2+k$, if $k$ is odd, and $\kappa=1/2-k$, if $k$ is even. We denote the $(m,h)$-th Fourier coefficient of the holomorphic part by $c_F(m,h)$. Moreover, let $f\in M^{\text{!}}_{-2k}(N)$, such that $\Lambda_{\Delta,r}(\tau,f)$ is weakly holomorphic and transforms with representation $\widetilde{\rho}$. We denote the Fourier coefficients of the principal part by $a_\Lambda(m,h)$. Then we have
\begin{align*}
 \sum_{h\in L'/L}& \sum_{\substack{m\geq 0\\m\equiv \sgn(\Delta)Q(h)\, (\Z)}}  c_F^+(-m,h) \mt_{\Delta,r}(f;m,h)
\\
&=-\sum_{h\in L'/L}\sum_{\substack{m\geq 0\\ -N\abs{\Delta}m^2\equiv \sgn(\Delta)Q(h)\, (\Z)}}c_F^+(N\abs{\Delta}m^2,h)a_{\Lambda}^+(-N\abs{\Delta}n^2,h).
\end{align*}
\end{corollary}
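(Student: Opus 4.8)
The plan is to deduce the identity from the vanishing of the Bruinier--Funke pairing between the lift and the input form $F$, read off coefficient-wise through Proposition \ref{prop:extpairing}. First I would separate the two parities and record the ambient spaces. For even $k$ we have $\kappa=1/2-k$, the relevant lift is $\LE$, which by hypothesis lies in $M^{!}_{3/2+k,\widetilde{\rho}}$, while $F\in H^+_{1/2-k,\overline{\widetilde{\rho}}}$; for odd $k$ we have $\kappa=3/2+k$, the lift is $\LO\in M^{!}_{1/2-k,\widetilde{\rho}}$, and $F\in H^+_{3/2+k,\overline{\widetilde{\rho}}}$. In both cases the two weights sum to $2$ and the representations are conjugate, so with $f=F$ and $g$ the lift, the pairing $\{g,F\}=(g,\xi_\kappa(F))^{\text{reg}}_{2-\kappa,\widetilde{\rho}}$ of Section \ref{sec:diffop} is defined and $\xi_\kappa(F)$ is a cusp form of weight $2-\kappa$ transforming with $\widetilde{\rho}$.

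The next step is to show that this pairing vanishes. For even $k$ the cusp form $\xi_\kappa(F)=\xi_{1/2-k}(F)$ lies in $S_{3/2+k,\widetilde{\rho}}$, and Theorem \ref{thm:orthcusp} gives $(\LE,\xi_{1/2-k}(F))^{\text{reg}}=0$ directly. For odd $k$ the cusp form $\xi_{3/2+k}(F)$ lies in $S_{1/2-k,\widetilde{\rho}}$, which is trivial for $k>0$ by the dimension argument in the proof of Corollary \ref{cor:lifttrans}, so the pairing vanishes automatically. Hence $\{g,F\}=0$ in every case.

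Now I would expand $\{g,F\}$ by Proposition \ref{prop:extpairing}, giving the finite sum $0=\sum_{h\in L'/L}\sum_{n\in\Q}c_F^+(n,h)\,a_\Lambda(-n,h)$, finite because $F$ and the lift each have finite principal part. I would then split according to the sign of $n$. For $n<0$ the coefficient $a_\Lambda(-n,h)$ is a positive-index Fourier coefficient of the holomorphic part of the lift, which Theorem \ref{thm:main} identifies with an explicit multiple of $\mt_{\Delta,r}(f;-n,h)$; reindexing by $m=-n>0$ produces the left-hand side. For $n\geq 0$ the coefficient $a_\Lambda(-n,h)=a_\Lambda^+(-n,h)$ is a principal-part coefficient (the value $n=0$ giving the constant term), and these are supported exactly on the indices $-N\abs{\Delta}m^2$ arising from the principal parts of the Poincar\'e series in Theorem \ref{thm:liftpoincare}; moving this block to the other side and reindexing by $m\geq 0$ (with $m=0$ absorbing the constant term) yields the right-hand side.

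The main obstacle I expect is bookkeeping rather than conceptual. One must verify that the negative-index support of the lift is precisely $\{-N\abs{\Delta}m^2\}$ in the component $h$ as written, matching the congruence $-N\abs{\Delta}m^2\equiv\sgn(\Delta)Q(h)\ (\Z)$, and that the explicit constants supplied by Theorem \ref{thm:main} are carried correctly through the reindexing on the left. As a consistency check I would specialize to $\Delta=1$ and the untwisted lift to recover the Bruinier--Funke duality, guarding against sign or normalization errors.
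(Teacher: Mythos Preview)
Your approach is correct and is essentially the paper's own argument: the paper just states that since $\xi_\kappa(F)$ is a cusp form, Theorem~\ref{thm:orthcusp} together with Proposition~\ref{prop:extpairing} ``directly implies'' the corollary, which is precisely the vanishing of $\{\Lambda_{\Delta,r}(\cdot,f),F\}$ expanded coefficientwise. Your treatment is in fact more explicit than the paper's---you correctly observe that for odd $k$ Theorem~\ref{thm:orthcusp} is not literally applicable but the pairing vanishes trivially because $S_{1/2-k,\widetilde{\rho}}=0$, and you flag the constants from Theorem~\ref{thm:main} that the corollary's statement suppresses; both points are glossed over in the paper.
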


\begin{proof}[Proof of Theorem \ref{thm:orthcusp}]
  To ease notation we prove the theorem in the untwisted case. Since the twisted lift is essentially a linear combination of untwisted ones the arguments carry over directly (see the proof of \cite[Theorem 5.5]{AE}).

Using the dominated convergence theorem it is tedious but straightforward to show that interchanging the integration with respect to $z$ and $\tau$ is allowed. That is
\begin{align*}
& \lim_{t\rightarrow\infty} \int_{\F_t}\langle \LE,g(\tau) \rangle v^{3/2+k} d\mu(\tau)
\\
&= \lim_{t\rightarrow\infty}  \int_{\F_t}\langle R_{3/2,\tau}^{k/2} \int_M (R^{k}_{-2k,z}f)(z)\thetaL{\phikm}, g(\tau)\rangle
\\
&= \int_M (R^{k}_{-2k,z}f)(z) \lim_{t\rightarrow\infty} \int_{\F_t} \langle R_{3/2,\tau}^{k/2}\thetaL{\phikm}, g(\tau)\rangle.
\end{align*}

We now consider the cases $k=0$ and $k>0$ separately. 
We first show that for $k>0$
\[
  \lim_{t\rightarrow\infty} \int_{\F_t} \langle R^{k/2}_{3/2,\tau} \Theta(\tau,z,\phikm),g(\tau)\rangle v^{3/2+k} d\mu(\tau)=0.
\]
Following the proof of \cite[Theorem 4.1]{BOR} we let 
\[
H:=v^{k-1/2}\overline{R_{3/2,\tau}^{k/2-1} \Theta(\tau,z,\phikm)}
\]
and
\[
h:=R_{3/2,\tau}^{k/2} \Theta(\tau,z,\phikm)=v^{-k-3/2}\overline{L_{1/2-k,\tau}H}.
\] 
Note that $R^{k/2-1}_{3/2,\tau} \Theta(\tau,z,\phikm)$ is only defined for $k>0$. 
We obtain
\begin{align*}
 &\int_{\F_t} \langle R_{3/2,\tau}^{k/2}  \Theta(\tau,z,\phikm),g(\tau) \rangle v^{3/2+k} d\mu(\tau)
\\
 &\quad\quad\quad\quad= \int_{\F_t} \langle v^{-k-3/2}\overline{L_{1/2-k,\tau}H} ,g(\tau) \rangle v^{3/2+k} d\mu(\tau).
\end{align*}
We have that
\begin{align}
 \notag  \langle v^{-k-3/2}\overline{L_{1/2-k,\tau}H} ,g(\tau) \rangle v^{3/2+k} d\mu(\tau)
 &\notag=  \langle \overline{2iv^2 \frac{\partial}{\partial\overline{\tau}}H} ,g(\tau) \rangle  \frac{dudv}{v^2}
 \\
 &\label{proof:der}= -\langle \overline{\frac{\partial}{\partial\overline{\tau}}H(\tau)},g(\tau) \rangle d\tau d\overline{\tau}.
\end{align}
By the holomorphicity of $g$ we obtain that \eqref{proof:der} equals
\[
 -\langle \overline{\frac{\partial}{\partial\overline{\tau}}H(\tau)},g(\tau) \rangle d\tau d\overline{\tau}=- \partial\left(\langle \overline{H(\tau)},g(\tau)\rangle d\overline{\tau}\right)= -d\left(\langle \overline{H(\tau)},g(\tau)\rangle d\overline{\tau}\right).
\]

We now apply Stoke's Theorem.
Since the integrand is $\mathrm{SL}_2(\Z)$-invariant the equivalent pieces of the boundary of the fundamental domain cancel and we obtain
\begin{align}
 & \int_{\F_t} \langle R_{3/2,\tau}^{k/2}  \Theta(\tau,z,\phikm),g(\tau) \rangle v^{3/2+k} d\mu(\tau)\notag
 \\
 &=-\int_{\partial\F_t} \langle v^{k-1/2}R_{3/2,\tau}^{k/2-1} \Theta(\tau,z,\phikm),g(\tau) \rangle d\tau\notag
 \\
 &= \sum_{h\in L'/L} \int_{-1/2}^{1/2} t^{k-1/2} R_{3/2,\tau}^{k/2-1}\theta_h(u+it,z,\phikm)\label{eq:dualityk} \overline{g_h(u+it)}du. 
\end{align}
Plugging in the Fourier expansions of the two series and carrying out the integration over $u$ we see that \eqref{eq:dualityk} equals
\[
\sum_{h\in L'/L} t^{k-1/2} \sum_{n=1}^\infty b(n,h) a(n,h) e^{-4\pi n t},
\] 
where $b(n,h)$ and $a(n,h)$ denote the Fourier coefficients of $g_h$ and $R_{3/2,\tau}^{k/2-1}\theta_h$. Here, the main contribution comes from the exponential terms, implying that the limits tends to $0$ as $t\rightarrow \infty$. 

For $k=0$ we use an argument for harmonic forms on Riemann surfaces to show that
\[
  \lim_{t\rightarrow\infty} \int_{\F_t}   \thetah{\phikm} \overline{g_h(\tau)} v^{3/2} d\mu(\tau)=0,
\]
where we consider the components separately now.

We first show that $\Delta_{0,z}$ annihilates this expression.
Since the the partial derivatives $\frac{\partial^2}{\partial x^2}$ and $\frac{\partial^2}{\partial y^2}$ of 
$
 \int_{\F_t} \thetah{\phikm}\overline{g_h(\tau)} v^{3/2} d\mu(\tau)
$
converge locally uniformly in $z$ as $t\rightarrow \infty$, we can interchange differentiating and taking the limit.

Recall that we have $
 \Delta_{3/2,\tau} \Theta(\tau,z,\phikm)= \frac14 \Delta_{0,z}\Theta(\tau,z,\phikm)
$ by \eqref{eq:deltatheta}, which implies
\begin{align*}
& \int_{\F_t} \Delta_{0,z}\thetah{\phikm}\overline{g_h(\tau)} v^{3/2} d\mu(\tau)
\\
&\quad\quad\quad\quad=
4 \int_{\F_t} \Delta_{3/2,\tau}\thetah{\phikm}\overline{g_h(\tau)} v^{3/2} d\mu(\tau).
\end{align*}
By Lemma 4.3 of \cite{BrHabil} we find
\begin{align}
&\int_{\F_t} \Delta_{3/2,\tau}\thetah{\phikm}\overline{g_h(\tau)} v^{3/2} d\mu(\tau) \notag
\\
&\quad\quad = \int_{\F_t} \thetah{\phikm}\Delta_{3/2,\tau}\overline{g_h(\tau)} v^{3/2} d\mu(\tau)\label{eq:int1}
\\
&\quad\quad\quad +\int_{-1/2}^{1/2} \left[\thetah{\phikm} \overline{L_{3/2,\tau}g_h(\tau)}v^{3/2}\right]_{v=t}du \label{eq:int2}
\\
&\quad\quad\quad - \int_{-1/2}^{1/2} \left[ L_{3/2,\tau}\thetah{\phikm}\overline{g_h(\tau)}v^{3/2}\right]_{v=t}du.\label{eq:int3}
\end{align}
The holomorphicity of $g$ implies that the integrals in \eqref{eq:int1} and \eqref{eq:int2} vanish. When plugging in the Fourier expansions of  $g_h(u+it)$ and $L_{3/2,\tau}\theta_h(u+it,z,\phikm)$ and integrating over $u$ we see that the resulting expression is exponentially decaying as $t\rightarrow \infty$, which then implies 
\[\Delta_{0,z} \lim_{t\rightarrow\infty} \int_{\F_t}   \thetah{\phikm} \overline{g_h(\tau)} v^{3/2} d\mu(\tau)=0.\]

Writing
$
\lim_{t\rightarrow\infty}\int_{\F_t} \thetah{\phikm}\overline{g_h(\tau)} v^{3/2} d\mu(\tau)=h(z)d\mu(z)
 $
for a smooth function on $M$, we have
$
\Delta_{0,z} h(z)=0
$.
By the square-exponential decay of the Kudla-Millson theta function \cite[Proposition 4.1]{Funke} $\Delta_{0,z} h(z)=0$ implies that $h(z)$ is constant \cite[Corollary 4.22]{BrHabil}. So it remains to show that this constant is zero.  
This follows from the following:
For $z\in\h$ and $\sigma_{\ell}$ as in Section \ref{sec:prelim} we have
\begin{equation}\label{eq:harm}
  \lim_{y\rightarrow i\infty}h(\sigma_{\ell}z)=0.
\end{equation}
For simplicity, we only consider the cusp $\ell=\infty$. A careful analysis yields that we can interchange the limit processes with respect to $t$ and $y$. The square exponential decay of $\thetah{\phikm}$ implies that $\lim_{y\rightarrow i\infty}\thetah{\phikm}=0$. Therefore, $\lim_{y\rightarrow i\infty}h(y)$ vanishes.

\end{proof}


\section{Singular Moduli}\label{sec:singmod}

Using Bruinier's and Ono's results \cite[Section 4]{BrOno2} we show that the singular moduli $3^{k} D^k (\frac{1}{\pi})^k R_{-2k}^k f(\alpha_Q)$ are algebraic integers. We first fix notation. We let $-D<0$ be a discriminant and $r\in\Z$ such that $r^2\equiv -D\pmod{4N}$. By $\mathcal{Q}_{D,r,N}$ we denote the set of positive definite integral binary quadratic forms $[a,b,c]$ of discriminant $-D$ with $N|a$ and $b\equiv r\pmod{2N}$. Then we let $\alpha_Q=\frac{-b+\sqrt{-D}}{2a}$ be the Heegner point corresponding to $Q\in\mathcal{Q}_{D,r,N}$. We write $\mathcal{O}_D$ for the order of discriminant $-D$ in $\Q(\sqrt{-D})$.

\begin{theorem}\label{thm:singmoduli}
Let $N$ be a square-free integer and let $D>0$ be coprime to $2N$ and $r\in\Z$ with $r^2\equiv -D \pmod{4N}$. Assume that $f\in M^{\text{!}}_{-2k}(N)$ has integral coefficients at all cusps. If $Q\in \mathcal{Q}_{D,r,N}$ is primitive, then $3^{k} D^k (\frac{1}{\pi})^k R_{-2k}^k f(\alpha_Q)$ is an algebraic integer in the ring class field for the order $\mathcal{O}_D \subset \Q(-\sqrt{D})$.
\end{theorem}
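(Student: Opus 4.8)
The plan is to reduce Theorem~\ref{thm:singmoduli} to the corresponding algebraicity statement for a suitable \emph{weakly holomorphic} modular form, exactly as Bruinier and Ono do in Section~4 of \cite{BrOno2}, and then to track how the raising operator $R_{-2k}^k$ interacts with the relevant algebraic structure. First I would recall that for a weakly holomorphic modular form $f \in M^{\text{!}}_{-2k}(N)$ with integral Fourier coefficients at all cusps, one knows by classical theory (the theory of singular moduli and the action of Hecke operators on modular forms with integral $q$-expansions, together with the Shimura reciprocity law) that, after multiplying by an appropriate power of the discriminant, the CM-values $f(\alpha_Q)$ lie in the ring class field $H_{\mathcal O_D}$ of the order $\mathcal O_D$, and that the full multiset $\{f(\alpha_Q) : Q\}$ is permuted by $\mathrm{Gal}(H_{\mathcal O_D}/\Q(\sqrt{-D}))$; this is the input Bruinier--Ono use, and it is where the hypothesis $\gcd(D,2N)=1$ and the primitivity of $Q$ enter.

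The heart of the argument is then the identity expressing $R_{-2k}^k f$ at a CM point in terms of algebraic quantities. The key point is that the $j$-th raising operator $R_{-2k}$ contributes a term $(c\tau+d)^{-2}$-type Maass--Shimura correction, and evaluated at a CM point $\alpha_Q$ the non-holomorphic piece $v^{-1}$ becomes an explicit algebraic multiple of $1/\sqrt{D}$ (more precisely, $v(\alpha_Q) = \sqrt{D}/(2a)$ with $N\mid a$). Thus $R_{-2k}^k f(\alpha_Q)$ is, up to the stated power of $D$ and $\pi$, a $\Z$-linear combination (with the combinatorial coefficients coming from the iterated raising, which are integers after clearing the factor coming from the $\frac{1}{2a}$'s — this is where $N\mid a$ and hence the factor $3^k$ enter, since $N$ may contribute a denominator bounded by a power of $3$ for the relevant level-$6$ type situations, or more generally one checks the denominator divides $3^k$ by a direct estimate as in \cite[Section 4]{BrOno2}) of CM-values of the \emph{weakly holomorphic} forms $D_{-2k}^j f$, where $D_{-2k}$ is the Serre derivative-type operator; each such form again has integral coefficients at all cusps, so the classical algebraicity result applies to it. Concretely I would write
\[
\left(\tfrac{D}{\pi}\right)^k R_{-2k}^k f(\alpha_Q) \;=\; \sum_{j=0}^{k} c_j\, D^{k-j}\, g_j(\alpha_Q),
\]
with $c_j \in \tfrac{1}{3^k}\Z$ and $g_j \in M^{\text{!}}_{-2(k-j)}(N) \cdot (\text{weight-}2j\text{ integral Eisenstein-type factor})$ having integral coefficients at all cusps, and then invoke algebraicity term by term, multiplying through by $3^k$ to clear denominators.

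The main obstacle I expect is the bookkeeping of denominators: showing that the rational coefficients produced by iterating $R_{-2k}$ and by substituting $v(\alpha_Q)=\sqrt D/(2a)$ really only introduce a denominator dividing $3^k$, uniformly over all primitive $Q \in \mathcal Q_{D,r,N}$ and over all cusps. This requires (i) a clean closed form for $R_{-2k}^k$ in terms of the holomorphic derivative and the weight-$2$ quasimodular form $E_2$ (the Maass--Shimura / Serre-derivative expansion), (ii) controlling the $2$-adic and $N$-adic valuations of the resulting coefficients, and (iii) checking the statement at all cusps, not just $\infty$, using that $N$ is square-free so the Atkin--Lehner involutions act transitively on cusps and preserve integrality of coefficients. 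The Galois-orbit claim is then immediate: Shimura reciprocity permutes the points $\alpha_Q$ compatibly with the $\Q$-rational operator $R_{-2k}^k$ applied to a form defined over $\Q$, so the multiset of values is $\mathrm{Gal}(H_{\mathcal O_D}/\Q(\sqrt{-D}))$-stable.
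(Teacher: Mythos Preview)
Your overall strategy is correct and is essentially the one the paper takes: the paper's proof simply cites Miller--Pixton \cite{MilPix} for the algebraicity of $R_{-2k}^k f(\alpha_Q)$ in the ring class field and invokes the obvious generalization of \cite[Lemma~4.7]{BrOno2} for the denominator bound. What you have written is a reasonable sketch of how those two cited results are actually proved, namely via the Maass--Shimura expansion of $R_{-2k}^k$ in terms of holomorphic derivatives and the quasimodular $E_2$, followed by evaluation at $\alpha_Q$ and an appeal to the classical theory of singular moduli and Shimura reciprocity.

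The one genuine gap in your proposal is the explanation of the factor $3^k$. You attribute it to the level $N$ (``level-$6$ type situations'') and to the substitution $v(\alpha_Q)=\sqrt{D}/(2a)$ with $N\mid a$; this is not correct, and tying the denominator to $N$ would make the statement depend on $N$ in a way it does not. The $3^k$ has nothing to do with $N$: it arises uniformly from the rational coefficients in the expansion of the iterated Maass raising operator in terms of the Serre derivative and $E_2$. Concretely, the relation $R_k = -4\pi\bigl(q\tfrac{d}{dq} - \tfrac{k}{12}E_2^*\bigr)$, with $E_2^* = E_2 - \tfrac{3}{\pi v}$, introduces a factor of $12$ in the denominator at each step; the $2$-part is absorbed by $D^k$ (since $D$ is odd by the coprimality hypothesis $\gcd(D,2N)=1$, one has to check that the $2$'s cancel against the $2a$ in $v(\alpha_Q)$, which is the content of the Bruinier--Ono lemma), leaving at worst $3^k$. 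This is precisely the computation carried out in \cite[Lemma~4.7]{BrOno2} for $k=1$ and its straightforward inductive generalization. The paper even remarks that work of Larson--Rolen suggests the $3^k$ is an artifact of this method and can likely be removed, confirming that it is a universal combinatorial factor rather than a level-dependent one. Once you correct this point, your argument matches the paper's.
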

\begin{proof}
 The theorem follows from combining work of Miller and Pixton and of Bruinier and Ono. In \cite[Proposition 3.1]{MilPix} Miller and Pixton show the algebraicity of the singular moduli $R_{-2k}^kf$ and the obvious generalization of \cite[Lemma 4.7]{BrOno2} yields the bound on the denominators.
\end{proof}

\begin{remark}
By classical results on singular moduli (the results that are relevant for us are summarized in \cite[Theorem 4.1]{BrOno2}) it follows that the multiset of values  $ R_{-2k}^k f(\alpha_Q)$ is a union of Galois orbits.
\end{remark}
\begin{remark}
Using work of Larson and Rolen \cite{LarRol} one might be able to drop the $3^k$.
\end{remark}

\section{Examples}
Here we prove the formulas for the coefficients of $\eta(\tau)^{-25}$ and Ramanujan's mock theta function $f(q)$ that were presented in the introduction.

\subsection{Powers of the Dedekind $\eta$-function}\label{sec:eta}

We let
\[
 G_{25}(\tau):=\sum_{r\in\Z/ 12\Z} \chi_{12}(r) \eta(\tau)^{-25}\mathfrak{e}_r.
\]
Moreover, we define 
\begin{align*}
 F&=-F_5(\cdot, 14, -26) + F_5(\cdot, 14, -26) |W_2^6 
\\
&\quad\quad+ F_5(\cdot, 14, -26) |W_3^6 - F_5(\cdot, 14, -26) |W_6^6
\end{align*}
and 
\begin{align*}
 \tilde{F} &=(25+5^{13}) \left(F_1(\cdot, 14, -26) - F_1(\cdot, 14, -26)|W_2^6 \right.
\\
&\quad\quad\left. - F_1(\cdot,14, -26)|W_3^6 + F_1(\cdot,14, -26)|W_6^6\right).
\end{align*}

Here we write $W_Q^N$ for the corresponding Atkin-Lehner involution (see for example \cite[Chapter IX.7]{Knapp} for a definition).

\begin{corollary}
 For $n>0$ the coefficient of index $\left(\frac{24n-1}{24},1\right)$ of $G_{25}$ is given by
\begin{align*}
&-\frac{185725}{4429185024\pi^{13}}\left(\frac{1}{24n-1}\right)^7
\\
&\quad\quad\quad\quad\quad\quad\quad\times\left(\mt\left(F;\frac{24n-1}{24},1\right)+ \mt\left(\tilde{F};\frac{24n-1}{24},1\right)\right).
\end{align*}
\end{corollary}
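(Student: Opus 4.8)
The plan is to realize $G_{25}$, up to an explicit constant, as the theta lift $\Lambda^{\text{o}}_{1,1}(\tau,f)$ of a suitable harmonic weak Maa\ss{} form $f$ of weight $-26$ for $\G_0(6)$, and then to extract its Fourier coefficients from Theorem~\ref{thm:main}(\ref{thm:fcoddtwist}). Here $N=6$, $\Delta=1$, $r=1$ and $k=13$, so $1/2-k=-25/2$, $-2k=-26$ and $\widetilde\rho=\rho$. First I would note that $\eta^{-25}=\eta^{-24}\cdot\eta^{-1}$ with $\eta^{24}$ a weight $12$ form on $\SL_2(\Z)$, so $G_{25}=\eta^{-24}\sum_{r\in\Z/12\Z}\chi_{12}(r)\eta(\tau)^{-1}\mathfrak e_r$ is $\eta^{-24}$ times the weight $-1/2$ vector valued form of Bruinier--Ono; hence $G_{25}\in M^{\text{!}}_{1/2-k,\widetilde\rho}$, and from $\eta^{-25}=q^{-25/24}(1+25q+\cdots)$ its principal part is $(q^{-25/24}+25\,q^{-1/24})\sum_{r}\chi_{12}(r)\mathfrak e_r$.

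Next I would pin down $f$ by matching principal parts. By Corollary~\ref{cor:lifttrans} the lift of any $f\in H^{+}_{-26}(6)$ lies in $H^{+}_{1/2-k,\widetilde\rho}$, and the principal part formula from the proof of the Kohnen type theorem expresses it through the poles $a_f(-m)$ of $f$ via the divisor sums $\sum_{n\mid m}n^{13}e(-\tfrac{m^2}{24n^2}\tau)(\mathfrak e_{m/n}+\mathfrak e_{-m/n})$. Since $\eta^{-25}$ has only the two pole terms $q^{-25/24}$ and $q^{-1/24}$, only the harmonic Poincar\'e series $F_5(\cdot,14,-26)$ and $F_1(\cdot,14,-26)$ (specialization $s_0=1+k=14$) are needed; the Atkin--Lehner signs are forced by the $\chi_{12}$--decoration of $G_{25}$, because $W_2^6,W_3^6,W_6^6$ permute the components $\{1,5,7,11\}\bmod 12$, so $f=(1-W_2^6)(1-W_3^6)$ applied to a combination of $F_5$ and $F_1$. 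The $q^{-25/24}$--term is carried by $F_5$ at $n=1$ and fixes its coefficient, while the $q^{-1/24}$--term receives contributions from $F_5$ at $n=5$ (yielding the factor $5^{13}$) and from $F_1$ at $m=n=1$; matching it to $25$ times the leading coefficient produces the constant $25+5^{13}$, and one arrives at $f=F+\tilde F$ with $F,\tilde F$ as in the statement.

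The crucial step — the one place genuinely beyond the setup of \cite{BrOno2}, where the input is weakly holomorphic — is to show that $\Lambda^{\text{o}}_{1,1}(\tau,F+\tilde F)$ is weakly holomorphic even though $F$ and $\tilde F$ are not. By the Kohnen type theorem this is equivalent to $L(\xi_{-26}(F+\tilde F),1,14)=0$. Since $\xi_{-26}$ sends $F_m(\cdot,14,-26)$ to a multiple of the weight $28$ cuspidal Poincar\'e series $P_m$, the form $\xi_{-26}(F+\tilde F)$ is an explicit Atkin--Lehner combination of $P_5$ and $P_1$ in $S_{28}(\G_0(6))$, and one checks, from the Atkin--Lehner decomposition of $S_{28}(\G_0(6))$ together with the functional equations of the (trivially) twisted $L$--functions — equivalently by a finite computation in which $25+5^{13}$ is precisely the value that kills the surviving projection — that its central twisted $L$--value vanishes. (If that projection instead makes $F+\tilde F$ itself weakly holomorphic, the conclusion is immediate.) Being weakly holomorphic, $\Lambda^{\text{o}}_{1,1}(\tau,F+\tilde F)$ lies in $M^{\text{!}}_{1/2-k,\widetilde\rho}$, a space of negative weight in which a weakly holomorphic form is determined by its principal part; hence it equals $c\cdot G_{25}$ for the explicit constant $c$ obtained from $C^{\text{o}}$ in Theorem~\ref{thm:liftpoincare} at $s=s_0=14$.

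Finally I would apply Theorem~\ref{thm:main}(\ref{thm:fcoddtwist}) with $k=13$: the $(m,1)$--th coefficient of the holomorphic part of the lift is $\big(\tfrac1{4\pi m}\big)^{7}\prod_{j=0}^{6}\big(\tfrac{13}{2}+j\big)(j-7)\,\mt(F+\tilde F;m,1)$, with $\mt(F+\tilde F;m,1)=\mt(F;m,1)+\mt(\tilde F;m,1)$ by linearity of the trace. Setting $m=\tfrac{24n-1}{24}$, so that $\tfrac1{4\pi m}=\tfrac{6}{\pi(24n-1)}$, dividing by $c$, and collecting $\prod_{j=0}^{6}(\tfrac{13}{2}+j)(j-7)$, the factor $(6/\pi)^{7}$ and $c^{-1}$ gives the coefficient $-\tfrac{185725}{4429185024\,\pi^{13}}\big(\tfrac1{24n-1}\big)^{7}(\mt(F;m,1)+\mt(\tilde F;m,1))$ of index $(\tfrac{24n-1}{24},1)$ of $G_{25}$, as claimed. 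The main obstacle is the weak holomorphicity of the lift, i.e.\ the vanishing $L(\xi_{-26}(F+\tilde F),1,14)=0$, which cannot be bypassed as in \cite{BrOno2} and must be read off from the structure of $S_{28}(\G_0(6))$; by contrast, tracking the normalizing constants ($C^{\text{o}}$, the Whittaker normalizations, and the powers of $\pi$, $6$ and $4$) down to the displayed rational number is only bookkeeping.
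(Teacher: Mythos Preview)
Your overall strategy---realize $G_{25}$ as a constant times $\Lambda^{\text{o}}(\tau,F)+\Lambda^{\text{o}}(\tau,\tilde F)$ by matching principal parts, then read off the coefficients from Theorem~\ref{thm:main}(\ref{thm:fcoddtwist})---is exactly the paper's approach, and the bookkeeping you outline is correct. The one substantive divergence is your ``crucial step'': you claim that before matching principal parts one must first prove the lift is weakly holomorphic, i.e.\ verify $L(\xi_{-26}(F+\tilde F),1,14)=0$ via an analysis of $S_{28}(\Gamma_0(6))$. The paper does \emph{not} do this, and it is not needed.

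The point you are missing is that in $H^{+}_{1/2-k,\widetilde\rho}$ with $k>0$ a harmonic weak Maa\ss{} form---not merely a weakly holomorphic one---is already determined by its principal part. (If two forms in $H^{+}_{1/2-k,\widetilde\rho}$ share a principal part, their difference has trivial principal part; the pairing of Proposition~\ref{prop:extpairing} then forces $\xi_{1/2-k}$ of the difference to be orthogonal to all of $S_{3/2+k,\overline{\widetilde\rho}}$, hence zero; so the difference is holomorphic of negative weight, hence zero. Equivalently, this is the remark that the Poincar\'e series span $H^{+}_{k,\rho}$ for $k<0$.) Since Corollary~\ref{cor:lifttrans} already places $\Lambda^{\text{o}}(\tau,F+\tilde F)$ in $H^{+}_{-25/2,\widetilde\rho}$, and $G_{25}\in M^{!}_{-25/2,\widetilde\rho}\subset H^{+}_{-25/2,\widetilde\rho}$, equality of principal parts gives $G_{25}=\tfrac{1}{2C^{\text{o}}}\bigl(\Lambda^{\text{o}}(\tau,F)+\Lambda^{\text{o}}(\tau,\tilde F)\bigr)$ immediately. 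The weak holomorphicity of the lift, and with it the vanishing of the central $L$-value you wanted to check, is then a \emph{consequence} of this identity (because $G_{25}$ is weakly holomorphic), not a prerequisite for it. So your proposed $L$-function computation is unnecessary, and what you flagged as the main obstacle is in fact not an obstacle at all.
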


\begin{proof}
Using the transformation properties of the Dedekind $\eta$-function one easily sees that $G_{25}\in M^{\text{!}}_{-25/2,\rho}$.
The principal part of $G_{25}(\tau)$ is equal to $(q^{-25/24}+25q^{-1/24})(\mathfrak{e}_1-\mathfrak{e}_5-\mathfrak{e}_7+\mathfrak{e}_{11})$.

For the lift $\frac{1}{C^{\text{o}}}\Lambda^{\text{o}}(\tau,F)$ of the Poincar\'{e} series $F$ we obtain (where $C^{\text{o}}$ is as in Theorem \ref{thm:liftpoincare})
\begin{align*}
& -  \sum_{n|5}n^{13} \mathcal{F}_{\frac{25}{24n^2},\frac{5}{n}}\left(\tau,\frac{29}{4},-\frac{25}{2} \right)+  \sum_{n|5}n^{13} \mathcal{F}_{\frac{25}{24n^2},\frac{5}{n}}\left(\tau,\frac{29}{4},-\frac{25}{2} \right)|W_2^6
\\
&+ \sum_{n|5}n^{13} \mathcal{F}_{\frac{25}{24n^2},\frac{5}{n}}\left(\tau,\frac{29}{4},-\frac{25}{2} \right)|W_3^6 - \sum_{n|5}n^{13} \mathcal{F}_{\frac{25}{24n^2},\frac{5}{n}}\left(\tau,\frac{29}{4},-\frac{25}{2} \right)|W_6^6.
\end{align*}  
This has principal part $2(q^{-25/24}-5^{13}q^{-1/24})(\mathfrak{e}_1-\mathfrak{e}_5-\mathfrak{e}_7+\mathfrak{e}_{11})$.

The lift $\frac{1}{C^{\text{o}}} \Lambda^{\text{o}}(\tau,\tilde{F})$ of $\tilde{F}$ is given by
\begin{align*}
& (25+5^{13})\left( \mathcal{F}_{\frac{1}{24},1}\left(\tau,\frac{29}{4},-\frac{25}{2} \right)- \mathcal{F}_{\frac{1}{24},1}\left(\tau,\frac{29}{4},-\frac{25}{2} \right)|W_2^6\right.
\\
&\quad\quad\quad\quad \left. - \mathcal{F}_{\frac{1}{24},1}\left(\tau,\frac{29}{4},-\frac{25}{2} \right)|W_3^6 + \mathcal{F}_{\frac{1}{24},1}\left(\tau,\frac{29}{4},-\frac{25}{2} \right)|W_6^6\right).
\end{align*}
This has principal part $2  (25+5^{13})q^{-1/24}(\mathfrak{e}_1-\mathfrak{e}_5-\mathfrak{e}_7+\mathfrak{e}_{11})$. Then the sum 
\[
\frac{1}{2C^{\text{o}}}\left(\Lambda^{\text{o}}(\tau,F)+\Lambda^{\text{o}}(\tau,\tilde{F})\right)
\] has principal part $(q^{-25/24}+25q^{-1/24})(\mathfrak{e}_1-\mathfrak{e}_5-\mathfrak{e}_7+\mathfrak{e}_{11})$.
Thus,
\[
 G_{25}(\tau)=\frac{1}{2 C^{\text{o}}} (\Lambda^{\text{o}}(\tau,F)+\Lambda^{\text{o}}(\tau,\tilde{F})),
\]
which implies the formula in the corollary.

\end{proof}

\begin{remark}
 More generally, one can deduce formulas for the coefficients of $\eta(\tau)^{-i}$, where $i\equiv 1 \pmod{24}$.
Here we let
\[
 G_{i}(\tau):=\sum_{r\in\Z/ 12\Z} \chi_{12}(r) \eta(\tau)^{-i}\mathfrak{e}_r.
\]
Then, similarly as above, one has to construct a linear combination of twisted lifts of Poincar\'{e} series whose lift has the same principal part as $G_i(\tau)$.
\end{remark}

\subsection{A formula for the coefficients of $f(q)$}\label{sec:fq}

We consider Ramanujan's mock theta function
\[
f(q):= 1+ \sum\limits_{n=1}^\infty \frac{q^{n^2}}{\left(1+q\right)^2 \left(1+q^2\right)^2\cdots\left(1+q^n\right)^2 }=1+ \sum\limits_{n=1}^\infty a_f(n)q^n.
\]
We let $\Delta<0$ be a fundamental discriminant with $\Delta \equiv 1 \pmod{24}$. We define
\[
F(z)= -\frac{1}{40} \frac{E_4(z)+ 4E_4(2z)-9E_4(3z)- 36E_4(6z)}{\eta(z)^2\eta(2z)^2\eta(3z)^2\eta(6z)^2}= q^{-1}-4+83q+\cdots,
\]
which is a weakly holomorphic modular form of weight $0$ for $\G_0(6)$.

\begin{corollary}
We have
\begin{align*}
 a_f\left(\frac{\abs{\Delta}+1}{24}\right) = -\frac{1}{8i\sqrt{\abs{\Delta}}}&(\mt_{\Delta,1}(F;1,1)-\mt_{\Delta,1}(F;1,5)
\\
&\quad
+\mt_{\Delta,1}(F;1,7)-\mt_{\Delta,1}(F;1,11)).
\end{align*}
\end{corollary}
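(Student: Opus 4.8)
The plan is to identify $f(q)$, or rather a completion of it, as the holomorphic part of a vector valued harmonic Maa\ss{} form of weight $1/2$ that arises as a twisted theta lift of the given weight $0$ form $F$. Concretely, it is classical (work of Zwegers, and the treatment in Bruinier--Ono \cite{BrOno2} for the untwisted analog) that Ramanujan's $f(q)$, suitably normalized and completed by a non-holomorphic Eichler integral, transforms like a component of a $\C[L'/L]$-valued harmonic weak Maa\ss{} form of weight $1/2$ for the Weil representation attached to the lattice $L$ with $N=6$. Since $\Delta<0$, the relevant case of Theorem~\ref{thm:main}(\ref{thm:fcoddtwist}) is $k=1$, so that $1/2-k = -1/2$ and $\kappa = 3/2+k = 5/2$; but here one wants the weight $1/2$ statement, which is the $k=0$ untwisted case of the Bruinier--Funke lift (Theorem 4.5 of \cite{BrFu06}, Theorem 5.5 of \cite{AE}), applied with the genus character $\chi_\Delta$. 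So the first step is to pin down the precise normalization: write $\mathcal{F}(\tau) := \Lambda_{\Delta,1}(\tau,F)$, which by Corollary~\ref{cor:lifttrans} (the $k=0$ remark) and the hypotheses on $F$ lies in $H^+_{3/2,\widetilde\rho}$ (here $\widetilde\rho=\bar\rho$ since $\Delta<0$), and by Theorem~\ref{thm:main} its $(m,h)$-th holomorphic Fourier coefficient, for $m>0$, equals $\mt_{\Delta,1}(F;m,h)$.

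Next I would match this lift against a known vector valued weight $1/2$ form built from $f(q)$. The point is that the space $H^+_{3/2,\bar\rho}$ for $N=6$ is small: by Corollary~\ref{thm:duality} (or directly Proposition~\ref{prop:extpairing}) a form in $H^+_{3/2,\bar\rho}$ is determined up to a cusp form by the principal part of the input $F$, and cusp forms of weight $3/2$ with this representation that are orthogonal to the image of the lift can be controlled. I would compute the principal part of $\mathcal{F}$ from Theorem~\ref{thm:liftpoincare}: expanding $F = F_1(\cdot,1,0) + (\text{Atkin--Lehner terms})$ as a combination of weight $0$ Poincar\'e series (using that $F = q^{-1}-4+\cdots$ has principal part $q^{-1}$ at $\infty$ and is controlled at the other cusps of $\G_0(6)$), the theorem gives the principal part of $\mathcal{F}$ in the components $\e_1,\e_5,\e_7,\e_{11}$ explicitly, with the signs $(\mathfrak e_1-\mathfrak e_5 - \mathfrak e_7 + \mathfrak e_{11})$ reflecting the genus character $\chi_\Delta$ evaluated on $\mathcal{Q}_{\Delta,1,6}$ (one uses $\chi_\Delta([a,b,6c]) = \kronecker{\Delta_1}{N_1a}\kronecker{\Delta_2}{N_2 c}$ from Section~\ref{sec:twistvec} together with $\Delta\equiv 1\pmod{24}$). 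On the other side, the completion $\widehat f$ of $f(q)$ is a weight $1/2$ mock modular object whose shadow is an explicit unary theta series; its multiplier system matches $\bar\rho$ for $N=6$ on exactly these four components. Comparing principal parts (both sides have a single $q^{-1/24}$-type singularity, up to the normalization constant) and invoking that the difference is a weight $3/2$ cusp form orthogonal to the lift, hence zero by Theorem~\ref{thm:orthcusp}, forces $\mathcal{F}$ and the $\widehat f$-package to coincide up to the scalar $-\tfrac{1}{8i\sqrt{|\Delta|}}$.

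Having identified the two forms, the corollary is immediate: reading off the $(\tfrac{|\Delta|+1}{24}, h)$-th holomorphic coefficient, the left side gives $a_f\big(\tfrac{|\Delta|+1}{24}\big)$ (up to the constant), and the right side gives, by Theorem~\ref{thm:main}, the combination $\mt_{\Delta,1}(F;1,1) - \mt_{\Delta,1}(F;1,5) + \mt_{\Delta,1}(F;1,7) - \mt_{\Delta,1}(F;1,11)$ of traces. Here one uses that $m = \tfrac{|\Delta|+1}{24}$ corresponds via the identification $L'/L \simeq \Z/12\Z$ to $h\in\{1,5,7,11\}$ with $-m \equiv -h^2/24 \pmod{\Z}$, and that $Q = [a,b,6c]$ with $b \equiv h \pmod{12}$ of discriminant $\Delta$ is primitive so that $\overline{\G}_Q$ is trivial and the Heegner points $\alpha_Q$ range over $\mathcal{Q}_{\Delta,1,6}$; since $F$ is weakly holomorphic, $\partial F = R_0^0 F = F$, so $\mt_{\Delta,1}(F;1,h) = \sum_Q \chi_\Delta(Q) F(\alpha_Q)$, which is the desired trace. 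The algebraicity of the $F(\alpha_Q)$ then follows from classical theory (Theorem~\ref{thm:singmoduli} with $k=0$), as stated.

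The main obstacle I expect is the bookkeeping in step two: correctly normalizing $f(q)$ so that its modular completion lands in $H^+_{3/2,\bar\rho}$ for the specific lattice $L$ with $N=6$ — tracking the eta-quotient multiplier, the half-integral weight, the precise index shift $\tfrac{24n-1}{24}$, and the exact placement of the four relevant components $\e_1,\e_5,\e_7,\e_{11}$ and the signs coming from $\chi_\Delta$ — and then verifying that the principal part of $\mathcal{F}=\Lambda_{\Delta,1}(\tau,F)$ computed from Theorem~\ref{thm:liftpoincare} matches it exactly, including the overall constant $-\tfrac{1}{8i\sqrt{|\Delta|}}$ (which absorbs $C^{\mathrm{o}}$, or rather here the $k=0$ normalization constant $\epsilon=i$ from Section~\ref{sec:thetakm}). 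Everything else is an application of results already established: the shape of the lift (Theorem~\ref{thm:main}), weak holomorphicity (Corollary~\ref{cor:lifttrans}), triviality of the obstructing cusp space together with orthogonality to cusp forms (Theorem~\ref{thm:orthcusp}), and classical algebraicity of singular moduli.
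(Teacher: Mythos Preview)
Your proposal contains a genuine gap: you are trying to \emph{identify} the lift $\mathcal F=\Lambda^{\text{e}}_{\Delta,1}(\tau,F)$ with (a form built from) the completion $\widehat f$ of $f(q)$, but these live in different weights. As you yourself note, $\mathcal F\in H^+_{3/2,\bar\rho}$ (the $k=0$ Bruinier--Funke lift has weight $3/2$), whereas the completion $H$ of $q^{-1/24}f(q)$ is a harmonic Maa\ss{} form of weight $1/2$. It makes no sense to compare their principal parts or to say that their difference is a weight $3/2$ cusp form. Moreover, the principal part of $\mathcal F$ is not ``a single $q^{-1/24}$-type singularity'': Theorem~\ref{thm:liftpoincare} gives it as $2i\sqrt{|\Delta|}\,q^{-|\Delta|/24}(\e_1-\e_5+\e_7-\e_{11})$, so the lift depends on $\Delta$ and cannot equal a single fixed form attached to $f(q)$.

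The paper's argument does not identify $\mathcal F$ with anything; it pairs $\mathcal F$ against $H$. Since $\mathcal F$ is weakly holomorphic (the non-holomorphic part of the $k=0$ twisted lift vanishes for $\Delta<0$ because $\chi_\Delta(-\lambda)=-\chi_\Delta(\lambda)$) and is orthogonal to cusp forms by Theorem~\ref{thm:orthcusp}, while $\xi_{1/2}(H)$ is a cusp form, one has $\{\mathcal F,H\}=(\mathcal F,\xi_{1/2}H)^{\text{reg}}=0$. Proposition~\ref{prop:extpairing} then expands this pairing as $\sum_{h,n}c_H^+(n,h)\,b_{\mathcal F}(-n,h)=0$. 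Only two ranges contribute: the principal part of $H$, namely $q^{-1/24}(\e_1-\e_5+\e_7-\e_{11})$, against the positive coefficients $b_{\mathcal F}(1/24,h)=\mt_{\Delta,1}(F;1,h)$ of $\mathcal F$ (Theorem~\ref{thm:main}); and the principal part of $\mathcal F$, at index $-|\Delta|/24$, against the coefficients $c_H^+(|\Delta|/24,h)=\pm a_f((|\Delta|+1)/24)$ of $H$. Equating these two contributions and dividing by $8i\sqrt{|\Delta|}$ gives the formula. Your ingredients (principal part computation via Theorem~\ref{thm:liftpoincare}, orthogonality via Theorem~\ref{thm:orthcusp}) are the right ones, but they feed into this duality pairing, not into an identification of forms.
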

\begin{remark}
 These formulas were checked numerically by Stephan Ehlen.
\end{remark}

\begin{proof}
Here we employ the duality results between weight $1/2$ and $3/2$. 
Note that $q^{-1/24}f(q)$ can be realized as the component of the holomorphic part of a vector valued harmonic Maa\ss{}  form $\tilde{H}$ of weight $1/2$ with representation $\bar{\rho}$ \cite[Lemma 81]{BrOno}. More precisely, 
\[
H=(0,h_0,h_2-h_1,0,-h_1-h_2,-h_0,0,h_0,h_1+h_2,0,h_1-h_2,-h_0)^{\text{T}},
\]
where the holomorphic part of $h_0$ is $q^{-1/24}f(q)$ and the holomorphic parts of $h_1$ and $h_2$ are given by Ramanujan's mock theta function $\omega(q)$. The non-holomorphic parts are given by certain unary theta series \cite[Section 8.2]{BrOno}.

The principal part of $H$ is given by
$
q^{-1/24}(\e_1-\e_5+\e_7-\e_{11})$.
In terms of Poincar\'{e} series we have
\[
F(z)\doteq F_1(z,1,0)+ F_1(z,1,0)|W_2^6- F_1(z,1,0)|W_3^6- F_1(z,1,0)|W_6^6,
\]
where $\doteq$ means up to addition of a constant. 

By Theorem 5.5 and Theorem 5.6 of \cite{AE} we see that $\Lambda_{\Delta,1}^{\text{e}}(\tau,F)$ is a weakly holomorphic modular form. Note that the non-holomorphic part vanishes since $6$ is square-free and for $\Delta<0$ we have $\chi_\Delta(-\lambda)=-\chi_\Delta(\lambda)$.
To determine the principal part of $\Lambda_{\Delta,1}^{\text{e}}(\tau,F)$ we compute the lift of the Poincar\'{e} series. Note that the lift of a constant vanishes.

By Theorem \ref{thm:liftpoincare} the function $F(z)$ lifts to a vector valued Poincar\'{e} series having principal part 
\[
 2i\abs{\Delta}^{1/2} q^{-\abs{\Delta}/24} (\e_1-\e_5+\e_7-\e_{11}).
\]
Therefore, by Proposition \ref{prop:extpairing} we obtain that $\left\{\Lambda_{\Delta,1}^{\text{e}}(\tau,F),H\right\}=0$, which implies
\begin{align*}
&c_H^+(-1,1)\mt_{\Delta,1}(F;1,1)+c_H^+(-1,5)\mt_{\Delta,1}(F;1,5)
\\
&\quad\quad+c_H^+(-1,7)\mt_{\Delta,1}(F;1,7)+c_H^+(-1,11)\mt_{\Delta,1}(F;1,11)
\\
&=-2i\sqrt{\abs{\Delta}} \left(c_H^+\left(\frac{\abs{\Delta}}{24},1\right)\cdot 1+c_H^+\left(\frac{\abs{\Delta}}{24},5\right)\cdot (-1)\right.
\\
&\quad\quad\left.+c_H^+\left(\frac{\abs{\Delta}}{24},7\right)\cdot 1+c_H^+\left(\frac{\abs{\Delta}}{24},11\right)\cdot (-1)\right).
\end{align*}

Since we can identify the coefficients in the different components of $H$ we obtain the formula in the corollary.

\end{proof}

\begin{remark}
 For $\Delta\equiv r^2\pmod{24}$, where $r\equiv 5,7,11\pmod{12}$, we consider 
\[
F(z)= -\frac{1}{40} \frac{E_4(z)\pm 4E_4(2z)\pm9E_4(3z)\pm 36E_4(6z)}{\eta(z)^2\eta(2z)^2\eta(3z)^2\eta(6z)^2}= q^{-1}-4+83q+\cdots
\]
and have  to arrange the $\pm$'s in such a way that we obtain (up to a constant) $q^{-\abs{\Delta}/24}(\e_1-\e_5+\e_7-\e_{11})$ as the principal part of the lift.
\end{remark}


\bibliography{bib_complete.bib}
\bibliographystyle{amsalpha}

\end{document}